\newtheorem{theorem}{Theorem}[section]
\newtheorem{lemma}[theorem]{Lemma}
\newtheorem{proposition}[theorem]{Proposition}
\newtheorem{corollary}[theorem]{Corollary}
\theoremstyle{definition}
\newtheorem*{claim}{Claim}
\newtheorem*{claim1}{Claim 1}
\newtheorem*{claim2}{Claim 2}
\newtheorem*{claim3}{Claim 3}
\theoremstyle{remark}
\newtheorem*{remark}{Remark}
\newtheorem*{remarks}{Remarks}
\newtheorem*{observation}{Observation}
\numberwithin{equation}{section}
\title[An inverse theorem]{An inverse theorem: when the measure of the sumset is the sum of the measures in a locally compact abelian group}
\author{John T. Griesmer}
\address{Department of Mathematics\\
The University of British Columbia\\
Room 121, 1984 Mathematics Road\\
Vancouver, B.C.\\
Canada V6T 1Z2 }
\curraddr{Department of Mathematics, University of Denver\\
John Greene Hall, Room 203
2360 S. Gaylord St.
Denver, CO 80208}
\email{John.Griesmer@du.edu}
\subjclass[2010]{11P70}
\begin{document}
\begin{abstract}
We classify the pairs  of subsets $A$, $B$  of a locally compact abelian group $G$ satisfying $m_*(A+B)=m(A)+m(B)$, where $m$ is the Haar measure for $G$ and $m_*$ is inner Haar measure.  This generalizes M. Kneser's classification of  such pairs when $G$ is assumed to be connected.  Recently, D.~Grynkiewicz classified the pairs of sets $A$, $B$ satisfying $|A+B|=|A|+|B|$ in an abelian group, and our result is complementary to that classification.  Our proofs combine arguments of Kneser and Grynkiewicz.
\end{abstract}

\maketitle

\section{Introduction}

\subsection{Small sumsets in LCA groups}    Given subsets $A$ and $B$ of an abelian group $G$, we consider their \textit{sumset} $A+B:=\{a+b: a\in A, b\in B\}$. A general theme in additive combinatorics is that $A$ and $B$ must be highly structured when $A+B$ is not very large in comparison to $A$ and $B$; see \cite{GrynkiewiczKemperman}, \cite{Nathansoninverse}, or \cite{TaoVu} for many instances of this theme.  We consider the case where $G$ is a locally compact abelian (LCA) group,  extending the investigations of \cite{Kneser56}.   Theorem 1 of \cite{Kneser56} describes the pairs $(A,B)$ of Haar measurable subsets of an LCA group $G$ with Haar measure $m$ satisfying $m_*(A+B)<m(A)+m(B)$; such pairs are often called \textit{critical pairs}. Here $m_*$ is the inner measure corresponding to $m$, so $m_*(S)=\sup\{m(E): E\subseteq S, E \text{ is compact}\}$.  If $S\subseteq G$ and $t\in G$, we write $S+t$ for the set $\{s+t:s\in S\}$, and write $C\sim D$ if $m(C\triangle D)=0$.

\begin{proposition}[\cite{Kneser56}, Theorem 1]\label{Ktopinequality}  Let $G$ be a locally compact abelian group with Haar measure $m$,  and $A$, $B\subseteq G$ measurable sets with $m_*(A+B)<m(A)+m(B)$.  Then the group $H:=\{t\in G: A+B+t \sim A+B\}$ is compact and open,
\begin{align}
A+B=A+B+H,
\end{align}  and
\begin{align}\label{critical}
m(A+B)=m(A+H)+m(B+H)-m(H).
\end{align}
\end{proposition}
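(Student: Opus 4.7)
The plan is to follow Kneser's original strategy, reducing first to compact $K \subseteq A$, $L \subseteq B$. The hypothesis forces $m(A), m(B) < \infty$, so by inner regularity we can choose such $K, L$ with $m(K) + m(L) > m(A) + m(B) - \epsilon$ for any preassigned $\epsilon > 0$. Because $K + L$ is compact and contained in $A + B$, we have $m(K+L) \leq m_*(A+B)$, so taking $\epsilon$ smaller than the deficit $m(A) + m(B) - m_*(A+B)$ yields the compact version of the hypothesis: $m(K+L) < m(K) + m(L)$.

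For such compact $K$, $L$, define the literal period $H_0 := \{t \in G : K + L + t = K + L\}$. This is a closed subgroup contained in $(K+L) - (K+L)$, hence compact, and $K+L$ is automatically a union of $H_0$-cosets. The heart of the proof --- and the main obstacle --- is to show that $H_0$ is \emph{open}. I would use Kneser's Dyson-type $e$-transform: for $e \in K - L$, set $K_e := K \cup (L+e)$ and $L_e := L \cap (K-e)$; then $m(K_e) + m(L_e) = m(K) + m(L)$ and $K_e + L_e \subseteq K + L$, so the pair $(K_e, L_e)$ retains a strict deficit. Iterating the transform in a manner that progressively symmetrizes the pair, and exploiting this deficit, one forces a whole neighborhood of $0$ into $H_0$. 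Making this iteration converge in the continuous LCA setting (where Kneser's original discrete iteration does not apply verbatim) is the delicate point; one may invoke the structure theorem for LCA groups to reduce to a product of $\mathbb{R}^n$ with a group admitting a compact open subgroup.

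Once $H_0$ is known to be compact and open, the quotient $G/H_0$ is discrete, and I would apply Kneser's theorem for discrete abelian groups to $\pi(K), \pi(L) \subseteq G/H_0$, where $\pi: G \to G/H_0$. Multiplying the resulting cardinality identity by $m(H_0)$ yields
\[
m(K + L) = m(K + H_0) + m(L + H_0) - m(H_0).
\]
Finally, letting $\epsilon \to 0$, the subgroups $H_0$ associated with the approximating pairs form an eventually constant sequence of compact open subgroups (their indices in $G$ are bounded in terms of the deficit), so they stabilize. A direct check identifies the limiting subgroup with the $H$ of the statement modulo the equivalence $\sim$, delivering the three conclusions simultaneously: $H$ is compact open, $A+B = A+B+H$, and the stated measure identity.
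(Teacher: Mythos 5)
This proposition is not proved in the paper at all: it is imported verbatim from Kneser's 1956 paper (Theorem 1 of \cite{Kneser56}) and used as a black box, so there is no internal proof to compare against; I can only assess your sketch on its own terms. On those terms there is a genuine gap, and it sits exactly at what you yourself call ``the heart of the proof'': showing that the stabilizer $H_0$ of the compact sumset $K+L$ is open, i.e.\ has positive Haar measure. Everything before that point (inner regular approximation by compact $K\subseteq A$, $L\subseteq B$ preserving the strict deficit) and everything after it (passing to the discrete quotient $G/H_0$, where $\pi(K)$, $\pi(L)$ are finite, the stabilizer of $\pi(K+L)$ is trivial, and discrete Kneser gives $m(K+L)=m(K+H_0)+m(L+H_0)-m(H_0)$) is routine or at least plausible. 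But the openness of $H_0$ \emph{is} the theorem: your description of the $e$-transform step --- ``iterating the transform in a manner that progressively symmetrizes the pair \ldots one forces a whole neighborhood of $0$ into $H_0$'' --- is a statement of intent, not an argument. In particular, how an iteration of the transforms $(K_e,L_e)$ produces a neighborhood of $0$ inside the literal stabilizer is never indicated, and the proposed reduction via the structure theorem $G\cong\mathbb{R}^n\times L$ does not help with the connected factor: for $G=\mathbb{R}^n$ (or any connected $G$) the proposition asserts that the hypothesis $m_*(A+B)<m(A)+m(B)$ is vacuous, i.e.\ it contains the Raikov--Macbeath--Kneser inequality $m_*(A+B)\geq m(A)+m(B)$, which needs its own proof and is untouched by your reduction.

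The closing limit argument is also thinner than it looks. Eventual constancy of the approximating stabilizers does not follow from a lower bound on $m(H_0)$ alone: a compact group can have infinitely many distinct open subgroups of a fixed finite index (e.g.\ $(\mathbb Z/2\mathbb Z)^{\mathbb N}$ has infinitely many of index $2$), so ``their indices are bounded, hence they stabilize'' is not valid as stated; one needs a monotonicity or comparison argument tying the stabilizers of an increasing sequence $K_n+L_n$ to each other and to the essential stabilizer $H=H(A+B)$ of the statement. Likewise, upgrading the conclusion from the compact approximants to the exact, everywhere (not a.e.) identity $A+B=A+B+H$ and to the inner-measure identity $m(A+B)=m(A+H)+m(B+H)-m(H)$ requires a filling argument of the type the paper later derives \emph{from} this proposition (cf.\ Lemma \ref{Hholes} and the pigeonhole fact that $C+D$ covers a full $H$-coset once $m(C)+m(D)>m(H)$); in your sketch this is compressed into ``a direct check,'' which conceals a real step. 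So the outline is a reasonable roadmap of Kneser's strategy, but the two load-bearing steps --- positivity of $m(H)$ via the transform iteration, and the passage from compact approximants to $(A,B)$ itself --- are asserted rather than proved.
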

Proposition \ref{Ktopinequality}, especially equation (\ref{critical}), plays a crucial role in our results.

\smallskip

Since connected groups have no proper compact open subgroups, Proposition \ref{Ktopinequality} implies $m_*(A+B)\geq \min\{m(A)+m(B),m(G)\}$ whenever $G$ is connected, generalizing results Raikov \cite{Raikov},  Macbeath \cite{Macbeath}, and Shields \cite{Shields}.  The following theorem from \cite{Kneser56} completely classifies the pairs of subsets of a compact, \textit{connected}, abelian group satisfying $m_*(A+B)=m(A)+m(B)$, when $m(A)>0$ and $m(B)>0$.  We call such a pair $(A,B)$ a \textit{sur-critical} pair.  Here $\mathbb T$ is the group $\mathbb R/\mathbb Z$; see \S \ref{intervalsdef} for our usage of the term ``interval."

\begin{proposition}[\cite{Kneser56}, Theorem 3]\label{Kconnected}  Let $G$ be a compact connected abelian group with Haar measure $m$, and $A$, $B\subseteq G$  measurable sets satisfying $m(A)>0$, $m(B)>0$, and $m_*(A+B)=m(A)+m(B)<1$.  Then there is a continuous surjective homomorphism $\chi:G\to \mathbb T$, and there are intervals $I$, $J\subseteq \mathbb T$ such that $A\subseteq \chi^{-1}(I)$, $B\subseteq \chi^{-1}(J)$,  $m(A)=m(\chi^{-1}(I))$, and $m(B)=m(\chi^{-1}(J))$.
\end{proposition}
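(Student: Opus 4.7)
\noindent My plan is to extract a proper closed subgroup $H \leq G$ whose cosets essentially partition both $A$ and $B$, reduce the problem to a one-dimensional statement on $\mathbb T$ via a character of $G$, and solve the one-dimensional case by lifting to $\mathbb R$ and invoking the equality case of Brunn--Minkowski.

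\smallskip

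\noindent For the subgroup, set $H := \{t \in G : A+B+t \sim A+B\}$. Continuity of $t \mapsto m((A+B+t) \triangle (A+B))$ makes $H$ a closed subgroup of $G$. Connectedness of $G$ forbids $H$ from being a proper open subgroup, and $H = G$ would force $A+B \sim G$, contradicting $m(A)+m(B) < 1$; so $H$ is closed, proper, and non-open. I would then prove, by applying Proposition~\ref{Ktopinequality} to compact inner approximations $A_n \subseteq A$, $B_n \subseteq B$ with $m(A_n) \to m(A)$, $m(B_n) \to m(B)$ and passing to a limit of the resulting compact open stabilizers, that the equality $m_*(A+B) = m(A)+m(B)$ forces the $H$-saturation $A \sim A+H$, $B \sim B+H$.

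\smallskip

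\noindent Next, produce the character. The dual $\widehat G$ is torsion-free (since $G$ is compact connected abelian), hence so is the nontrivial discrete group $\widehat{G/H}$; any nonzero element of $\widehat{G/H}$ yields a continuous surjection $G/H \to \mathbb T$, and composition with the quotient $G \twoheadrightarrow G/H$ gives $\chi : G \to \mathbb T$. Since $H \subseteq \ker \chi$, the $H$-saturation becomes $\ker\chi$-saturation: $A \sim \chi^{-1}(A')$, $B \sim \chi^{-1}(B')$ with $A' := \chi(A)$, $B' := \chi(B)$. The Weil integration formula yields $m(A) = m_{\mathbb T}(A')$, $m(B) = m_{\mathbb T}(B')$, and $m_{\mathbb T}(A' + B') = m(A) + m(B) < 1$. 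For the one-dimensional case: since $A'+B' \neq \mathbb T$, translate so that a neighborhood of $0 \in \mathbb T$ is disjoint from $A' + B'$ and lift $A', B'$ to compact subsets $\tilde A', \tilde B' \subseteq [0,1) \subset \mathbb R$; the sumset $\tilde A' + \tilde B' \subseteq [0,2)$ avoids all lifts of $0$, so its projection to $\mathbb T$ preserves measure, and the equality case of one-dimensional Brunn--Minkowski forces $\tilde A', \tilde B'$ to be intervals modulo null sets, yielding the required arcs $I, J$.

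\smallskip

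\noindent The delicate step is finding $H$ with the right saturation property: Proposition~\ref{Ktopinequality} is formulated under strict inequality, whereas the hypothesis gives equality. Bridging the gap requires either a perturbation of $A, B$ producing strict inequality without changing the measures much (subtle in the connected setting, where every compact open subgroup collapses to $G$), or a direct imitation of Kneser's argument in the equality case, constructing $H$ as a limit of stabilizers attached to compact sub-pairs and carefully tracking the measure balance via identity~\eqref{critical}. A lesser subtlety is verifying that $G/H$ specifically surjects onto $\mathbb T$ (rather than onto some other one-dimensional compact group), which follows from torsion-freeness of $\widehat G$ together with standard Pontryagin duality.
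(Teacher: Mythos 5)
There is a genuine gap, and it sits exactly where you flag ``the delicate step.'' The mechanism you propose for the key saturation claim cannot work: Proposition~\ref{Ktopinequality} has no nontrivial instances inside a connected group. If $A_n\subseteq A$, $B_n\subseteq B$ are compact with positive measure and $m(A_n+B_n)<m(A_n)+m(B_n)$, the proposition would make $H(A_n+B_n)$ compact \emph{and open}, hence equal to $G$ by connectedness, forcing $m(A_n+B_n)=m(G)=1$, contradicting $m(A_n+B_n)\leq m_*(A+B)<1$. So your inner approximations never satisfy the hypothesis, there are no ``resulting compact open stabilizers'' to pass to a limit of, and the assertion that equality forces $A\sim A+H$, $B\sim B+H$ with $H=H(A+B)$ proper is left without proof. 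That assertion is essentially the whole theorem (a posteriori $H=\ker\chi$), and the fallbacks you sketch --- a perturbation, or ``a direct imitation of Kneser's argument'' --- are precisely the missing content, not a reduction of it. Note also that the paper does not reprove Proposition~\ref{Kconnected}; it quotes it from Kneser, whose proof runs through the iterated Dyson $e$-transform with $m(B^{(n)})\to 0$ (see \S\ref{etsection}, and compare the analogous disconnected argument in Proposition~\ref{specialprop}), not through a stabilizer--quotient reduction.

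Two further steps are defective even if the saturation were granted. From $H\subseteq\ker\chi$ and $A\sim A+H$ you conclude $A\sim\chi^{-1}(\chi(A))$; this implication goes the wrong way, since saturation under a subgroup does not yield saturation under a \emph{larger} subgroup (take $H=\{0\}$, which is the typical a priori value of $H(A+B)$ in this equality setting). What is needed is $\ker\chi$-saturation for a correctly chosen $\chi$, and an arbitrary nonzero character of $G/H$ does not provide it; the real issue is not that $G/H$ surjects onto $\mathbb T$ (automatic for a connected quotient and a nontrivial character) but that the measures of $A$, $B$, $A+B$ are computed through $\chi$, which is exactly the unproven point. Finally, in the $\mathbb T$ endgame, $A'+B'$ need not miss a neighborhood of any point --- it can be dense while $m_*(A'+B')=m(A')+m(B')<1$ --- so the ``translate and lift to $[0,1)$'' step requires first passing to compact subsets of nearly full measure and then a limiting argument to extract a single pair of intervals before the equality case of Brunn--Minkowski applies. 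That last defect is repairable; the first gap is not, short of reproducing Kneser's $e$-transform argument.
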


Kemperman \cite{KempermanTopological} generalized part of Proposition \ref{Ktopinequality} to the case where $G$ is not abelian, showing in particular that $m_*(A\cdot B)\geq m(A)+m(B)$ for subsets of a general locally compact connected group (here $A\cdot B$ denotes $\{ab:a\in A, b\in B\}$).  Bilu \cite{Bilu} investigates the inequality $m(A+B)<m(A)+m(B)+\min\{m(A),m(B)\}$ when the ambient group is $\mathbb T^d$ for some $d$; the case $d=1$ is studied in \cite{MoskvinFreimanJudin}.

\subsection{New results}  When $G$ is disconnected, the pairs $(A,B)$ of subsets of $G$ satisfying $m_*(A+B)=m(A)+m(B)$ are not completely described by Proposition \ref{Kconnected}.   For example, take $G=(\mathbb Z/5\mathbb Z)\times \mathbb T$, and let $A=(\{0\}\times \mathbb T) \cup (\{1\} \times [0, 1/2])$, $B=(\{0,1\}\times \mathbb T) \cup (\{2\} \times [0,1/4])$, so that $A+B=(\{0,1,2\}\times \mathbb T) \cup (\{1\} \times [0,3/4])$.  Other examples are discussed in \S \ref{examples}. We classify the pairs $(A,B)$ of measurable subsets of a locally compact abelian group satisfying $m(A)>0$, $m(B)>0$, and $m_*(A+B)=m(A)+m(B)$, generalizing Proposition \ref{Kconnected} to the case where $G$ may be disconnected.   Our main result is Theorem \ref{main}, where $G$ is assumed to be compact.   See \S \ref{terminology} for notation and terminology.  In \S \ref{lc} we discuss the case where $G$ is not compact.

\begin{theorem}\label{main}  Let $G$ be a compact abelian group with Haar measure $m$, and let $A$, $B\subseteq G$ be measurable sets such that $m(A)>0$, $m(B)>0$, and $m_*(A+B)=m(A)+m(B)$.  Then at least one of the following is true\textup:
\begin{enumerate}
\item[(P)]  There is a compact open subgroup $K\leqslant G$ with $A+K\sim A$ and  $B+K\sim B$.

\smallskip

\item[(E)]  There are measurable sets $A'\supseteq A$ and  $B'\supseteq B$ such that
\begin{align*}
m(A')+m(B')>m(A)+m(B),
\end{align*}  and $m_*(A'+B')=m_*(A+B)$.

\smallskip

\item[(K)]  There is a compact open subgroup $K\leqslant G$, a continuous surjective homomorphism $\chi:K\to \mathbb T$, intervals $I$, $J\subseteq \mathbb T$, and elements $a$, $b\in G$ such that $A\subseteq a+\chi^{-1}(I)$, $B\subseteq b+\chi^{-1}(J)$, $m(A)=m(\chi^{-1}(I))$, and $m(B)=m(\chi^{-1}(J))$.

\smallskip

\item[(QP)]  There is a compact open subgroup $K\leqslant G$ and partitions $A=A_1\cup A_0$, $B=B_1\cup B_0$ such that $A_0\neq \varnothing$, $B_0\neq \varnothing$, at least one of $A_1\neq \varnothing$, $B_1\neq \varnothing$, and

\begin{enumerate}
\item[(QP.1)]
$A_1+K\sim A_1$, $B_1+K\sim B_1$,  while  $A_0$ and $B_0$ are each contained in a coset of $K$ and $(A_1+K)\cap A_0=(B_1+`K)\cap B_0=\varnothing$\textup{;}

\smallskip

\item[(QP.2)]  $A_0+B_0+K$ is a unique expression element of $A+B+K$ in $G/K$\textup{;}

\smallskip

\item[(QP.3)]    $m_*(A_0+B_0)=m(A_0)+m(B_0)$.
\end{enumerate}
\end{enumerate}
\end{theorem}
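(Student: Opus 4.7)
The plan is to combine Kneser's measure-theoretic techniques with Grynkiewicz's discrete classification of pairs satisfying $|A+B|=|A|+|B|$ in abelian groups. Assume (P) and (E) both fail; the goal is to deduce (K) or (QP). Failure of (E) provides a rigidity condition: for any measurable $A'\supseteq A$, $B'\supseteq B$ with $m(A')+m(B')>m(A)+m(B)$, one has $m_*(A'+B')>m_*(A+B)$, so the sumset measure cannot absorb any essential enlargement of $A$ or $B$.

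First I would extract a compact open subgroup $K\leqslant G$ by perturbing $(A,B)$ to a strictly critical pair. Choose a compact set $U$ of positive measure so that $(A\cup U, B)$ satisfies $m_*((A\cup U)+B)<m(A\cup U)+m(B)$. A promising choice is a small neighborhood of a carefully chosen point, where failure of (E) forces the strictness while the local geometry keeps $m_*((A\cup U)+B)$ from exceeding $m(A\cup U)+m(B)$. Proposition~\ref{Ktopinequality} then yields a compact open $K$ with $(A\cup U)+B=(A\cup U)+B+K$ and
\begin{align*}
m((A\cup U)+B) = m((A\cup U)+K)+m(B+K)-m(K).
\end{align*}
Combining this identity with $m_*(A+B)=m(A)+m(B)$ tightly constrains how far $A$ and $B$ are from being unions of $K$-cosets; failure of (P) then forces a nontrivial discrepancy.

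Second I would pass to the finite quotient $G/K$. Let $\pi\colon G\to G/K$ and set $\bar A=\pi(A)$, $\bar B=\pi(B)$. The Kneser identity above together with $\pi(A+B)\subseteq \bar A+\bar B$ implies $|\bar A+\bar B|\leq |\bar A|+|\bar B|-1$, so $(\bar A,\bar B)$ is a discrete critical pair in the finite abelian group $G/K$. Grynkiewicz's classification then yields one of three outcomes: $(\bar A,\bar B)$ is fully periodic (in which case the stabilizing subgroup pulls back to a larger compact open $K'$ that gives (P), contradicting our assumption), of arithmetic-progression type, or quasi-periodic. An arithmetic-progression structure in $G/K$ lifts to the character $\chi\colon K\to\mathbb T$ of case (K), with Proposition~\ref{Kconnected} applied on the identity component of $K$ supplying the interval structure. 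A quasi-periodic $(\bar A,\bar B)$ lifts to the decompositions $A=A_1\cup A_0$, $B=B_1\cup B_0$ of (QP), with (QP.3) following by applying the equality case recursively to the residual pair $(A_0,B_0)$ inside a single $K$-coset.

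The main obstacle is Step~1: the perturbation $U$ must be chosen so that the stabilizer $K$ produced by Proposition~\ref{Ktopinequality} is the \emph{correct} subgroup for the structure of $(A,B)$, not merely a subgroup of it. If the first $K$ is too small, the discrete analysis in $G/K$ returns a structure incompatible with the measure equality, requiring a refined choice of $U$. Making this iteration terminate with a clean classification, and correctly matching the (K) and (QP) cases of the theorem to the arithmetic-progression and quasi-periodic outputs of Grynkiewicz's theorem, is the heart of the argument. The remaining lifting from $G/K$ to $G$ and the measure bookkeeping on cosets should be routine given Proposition~\ref{Kconnected} and standard Haar measure manipulations.
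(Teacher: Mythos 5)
Your first step already fails in the case it most needs to handle. To extract a compact open subgroup you propose enlarging $A$ to $A\cup U$ so that $(A\cup U,B)$ becomes a critical pair and Proposition \ref{Ktopinequality} applies. But failure of (E) says precisely that every essential enlargement of $A$ strictly increases $m_*(A+B)$, which works \emph{against} making the pair critical, and no choice of $U$ is exhibited. Worse, when the relevant structure is connected --- e.g.\ $G=\mathbb T$ with $A,B$ intervals, or more generally the situation of conclusion (K) with $A,B$ inside a coset of a connected open subgroup --- there is no proper compact open subgroup for Proposition \ref{Ktopinequality} to produce, so the quotient $G/K$ is trivial and the discrete classification gives nothing. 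This is exactly the (K) case, and your route to it (``Proposition \ref{Kconnected} applied on the identity component of $K$'') presupposes that $A$ and $B$ already sit in single cosets of an open subgroup with open identity component; the paper reaches that situation only through the iterated $e$-transform (Lemma \ref{esequence}) together with the compactness argument showing $B^{(n)}-B^{(n)}$ shrinks into any neighborhood of $0$ (the Claim in Proposition \ref{specialprop}). Separately, your inequality $|\bar A+\bar B|\leq|\bar A|+|\bar B|-1$ does not follow from the displayed Kneser identity, which concerns $A\cup U$ rather than $A$: a sur-critical $A$ may meet extra $K$-cosets in null sets (this happens in the paper's own examples), inflating $|\bar A|$ and $|\bar A+\bar B|$ independently of $m(A)$, and $\pi(A+B)=\bar A+\bar B$ always, so that containment carries no information.

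Even granting a discrete critical pair in $G/K$, the classification of $(\bar A,\bar B)$ (which is Kemperman's theorem for $|\bar A+\bar B|=|\bar A|+|\bar B|-1$, not Grynkiewicz's) only constrains $A+K$ and $B+K$; the entire content of Theorem \ref{main} is the structure of $A$ and $B$ \emph{within} the exceptional cosets --- the interval structure in (K), the unique expression element and the equality $m_*(A_0+B_0)=m(A_0)+m(B_0)$ in (QP). Your proposal defers this to ``routine lifting,'' but it is the heart of the proof: the paper spends Lemmas \ref{reduciblelemma1}--\ref{QPsumtosummand} reducing to an irreducible, nonextendible pair with $A+B$ measurable and $H(A+B)=\{0\}$, and then Lemma \ref{transformcase1} (the analogue of Grynkiewicz's Subcase 1 argument) to control what happens when an $e$-transform collapses the sumset. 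None of that machinery is replaced by anything in your sketch, and the measurability and $H(A+B)=\{0\}$ reductions are not mentioned at all.
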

\begin{remarks} (i) Corollary \ref{QPfour} provides more detail in conclusion (QP); conclusion (E) is examined in \S \ref{Eremark}.  Corollary \ref{roman} is a convenient rephrasing of Theorem \ref{main}.

\smallskip

\noindent (ii)  We do not provide explicit constructions of all pairs satisfying conclusion (QP), so the statement of Theorem \ref{main} is not a complete characterization of the sur-critical pairs for a compact abelian group.

\smallskip

\noindent (iii)  The labels (P), (E), (K), and (QP) stand for ``periodic," ``extendible," ``Kneser," and ``quasi-periodic," respectively.  See \S \ref{setsandpairs} for elaboration.

\smallskip

\noindent (iv) Theorem \ref{main} was developed partly to help answer Question 4.1 of \cite{Jin10}. \hfill $\blacksquare$ \end{remarks}

Theorem \ref{main} is proved in \S \ref{proof}.  A sequence of lemmas in \S \ref{lemmas} reduces the proof to a special case.  We then apply the $e$-transform  (\S \ref{etsection}) as in the proof of Proposition \ref{Kconnected} from \cite{Kneser56}.  Lemma \ref{transformcase1} handles difficulties related to applying the $e$-transform in a disconnected group; its statement and proof are inspired by arguments from \cite{Grynkiewicz}.

\smallskip

Some easy consequences of Theorem \ref{main} are discussed in \S \ref{corollarysection}.  Examples are given in \S \ref{examples} showing that each conclusion in Theorem \ref{main} may occur, and none can be omitted.  \S \ref{corollarysection} and \S \ref{examples} are technically irrelevant to the proof of Theorem \ref{main}.

\subsection{When one of $m(A)=0$ or $m(B)=0$.}  When $m(B)=0$, $0<m(A)<\infty$, and $B$ contains the identity element of $G$, Theorem 4 of \cite{Kneser56} says that $m_*(A+B)=m(A)+m(B)$ if and only if the closed subgroup $G'\leqslant G$ generated by $B$ is compact, and $A$ can be partitioned as $A=C\cup D$, where $m(C)=0$, $m_*(B+C)=0$ and $m(G'+D)=m(D)$.  We have nothing to say regarding the case where $m(A)=m(B)=0$.

\subsection{Context}  The conclusion of Theorem \ref{main} is trivial for discrete groups $G$, where the trivial subgroup $\{0\}$ is compact and open, so conclusion (P) holds with $K=\{0\}$.  The article \cite{Grynkiewicz} classifies the sur-critical pairs for discrete groups, and therefore provides a classification of the pairs $(A,B)$ satisfying conclusion (P) in Theorem \ref{main} (see \S \ref{periodicexamples}).   We summarize the development of some related results.

\smallskip

\textit{Inverse theorems} in additive combinatorics deduce properties of subsets $A$, $B$ of an abelian group from a hypothesis on the sumset $A+B$, while   \textit{direct theorems} deduce properties of $A+B$ from hypotheses on $A$ and $B$.  One of the earliest direct theorems is the Cauchy-Davenport inequality, which states that $|A+B|\geq \min\{|A|+|B|-1, p\}$ whenever $A$, $B\subseteq \mathbb Z/p\mathbb Z$ for some prime $p$; here $|S|$ denotes the cardinality of the set $S$.  The corresponding inverse theorem, due to Vosper \cite{Vosper1,Vosper2}, classifies the pairs $(A,B)$  of subsets of $G=\mathbb Z/p\mathbb Z$ satisfying $|A+B|=|A|+|B|-1$, when $|A|+|B|< p$: the equation holds if and only if $A$ and $B$ are arithmetic progressions with the the same common difference, or one of $|A|=1$, $|B|=1$.  When $|A|+|B|=p$, an elementary analysis shows that the equation $|A+B|=|A|+|B|-1$ is satisfied exactly when $B$ is a translate of $-(G\setminus A)$, and when $|A|+|B|=p+1$, the equation $|A+B|=|A|+|B|-1$ is always satisfied if $A$ and $B$ are nonempty.

\smallskip

Generalizing Vosper's theorem and strengthening  Proposition \ref{Ktopinequality}, Kemperman \cite{Kemperman60} classified the pairs of finite subsets $A$, $B$ of an arbitrary abelian group satisfying $|A+B|<|A|+|B|$;  see \cite{GrynkiewiczKemperman} for exposition.  The articles \cite{HaRo,HaSeZe} classify pairs $(A,B)$ satisfying $|A+B|=|A|+|B|$ in certain ambient abelian groups.  More recently, Grynkiewicz \cite{Grynkiewicz} classified the pairs $(A,B)$ of finite subsets of an arbitrary abelian group satisfying $|A+B|=|A|+|B|$.  That classification is somewhat intricate, so we do not reproduce it here.   Concatenating Theorem \ref{main} with the results of \cite{Kemperman60} and \cite{Grynkiewicz} yields a very precise description of pairs $(A,B)$ of subsets of a compact abelian group satisfying $m_*(A+B)=m(A)+m(B)$, but we will not state this description explicitly.

\section{Terminology, notation, and background}\label{terminology} We assume knowledge of the theory of locally compact abelian groups.   The books \cite{HewittandRoss1} and \cite{ReiterStegeman} together provide nearly sufficient background, as does \cite{Kneser56} and its bibliography.    We need Theorem A of \cite{Mueller65}, which is proved in \cite{Mueller62}.

\smallskip

Some of our terminology and notation is taken from \cite{Grynkiewicz} and \cite{Kneser56}; when $G$ is discrete, some of our definitions coincide with those from \cite{Grynkiewicz}.

\subsection{General conventions}  Throughout, $G$ will denote a locally compact abelian group and $m_G$ will be its Haar measure.  We make the standard assumption that $G$ satisfies the $T_0$ separation axiom.  When there is no chance of confusion, we write $m$ for $m_G$, and $m_*$ for the corresponding inner measure.   The term \textit{measurable} in reference to a subset of $G$ will always mean ``lies in the completion of the Borel $\sigma$-algebra with respect to $m$."  We will exploit inner regularity of Haar measure for compact groups: $m(C)=\sup\{m(E): E\subseteq C, E \text{ is compact}\}$ when $C$ is measurable and $G$ is compact.  Haar measure will always be normalized for compact groups, so $m_G(G)=1$ for such $G$.

\smallskip

If $S\subseteq G$, $S^c$ will denote the set theoretic complement $G\setminus S$.  The symbol $-S$ denotes $\{-s:s\in S\}$, while $A-B$ denotes the difference set $\{a-b:a\in A, b\in B\}$.

\smallskip

 The identity element of $G$ will be written as $0_G$, or simply $0$ if there is no ambiguity.

\subsubsection{Intervals}\label{intervalsdef}  The symbol $\mathbb T$ will denote the group $\mathbb R/\mathbb Z$ with its usual topology,  and  ``an interval in $\mathbb T$" means a set of the form $[x,y]+\mathbb Z$, where $x\leq y <x+1$.  We omit the term ``closed" as we will never refer to non-closed intervals.  We frequently exploit the following property of intervals:  if $I$, $J\subseteq \mathbb T$ are intervals with $m_{\mathbb T}(I)+m_{\mathbb T}(J)<1$, and $x\notin I$, then $m_{\mathbb T}((x+J)\setminus (I+J))>0$.  Consequently, if $\chi:G\to \mathbb T$ is a continuous surjective homomorphism and $t\notin \chi^{-1}(I)$, then $m_G((t+\tilde{J})\setminus (\tilde{I}+\tilde{J}))>0$, where $\tilde{I}=\chi^{-1}(I)$ and $\tilde{J}=\chi^{-1}(J)$.

\subsubsection{Subgroups and quotients} If $K\leqslant G$ is a closed subgroup, the quotient $G/K$ with the quotient topology is a locally compact abelian group.  We may identify subsets of $G$ of the form $A+K$ with subsets of $G/K$, and conversely subsets of $G/K$ may be identified with subsets of $G$.

\smallskip

A \textit{$K$-coset decomposition} of a set $A\subseteq G$ is the collection of sets $A_i=A\cap K_i$, where $K_i$ ranges over the cosets of $K$ having nonempty intersection with $A$.

\smallskip

We use without comment the following well-known facts:

\begin{itemize}
\item If $K\leqslant G$ is a measurable subgroup of a compact group, then $m(K)>0$ if and only if $K$ is open, if and only if $K$ has finite index in $G$.  The index of $K$ in $G$ is $1/m(K)$.

\item If $K\leqslant G$ is an open subgroup, the group $G/K$ is discrete.
\end{itemize}

\subsubsection{Disintegration of Haar measure}   This subsection regards technicalities occurring only in \S \ref{trivializing}.  For simplicity we assume $G$ is compact.  Given a closed subgroup $K\leqslant G$, we consider the Haar measure $m_K$ as a measure on $G$.   If $A\subseteq G$, and $x\in G$, we consider $m_K(A-x)$, which may be regarded as the $m_K$-measure of $A$ in the coset $x+K$ of $K$.  Although the function $g(x):= m_K(A-x)$ depends only on the coset $K+x$, and may therefore be regarded as a function whose domain is (a subset of) $G/K$, we prefer to regard $g$ as a function whose domain is (a subset of) $G$.  The measure $m_G(A)$ can be recovered in a natural way from the measures $m_K(A-x)$; in other words, Haar measure can be disintegrated over the cosets of a closed subgroup.  This is the content of the following proposition; cf.~\S 2 of \cite{Kneser56}. It may be obtained by specializing Theorem 3.4.6 of \cite{ReiterStegeman} to the case where $G$ is compact and abelian.

\begin{proposition}\label{Weil}  Let $K\leqslant G$ be a closed subgroup of $G$ with Haar measure $m_K$, and let $f\in L^1(m)$ be a real-valued function.  Then the function $\tilde f: G\to \mathbb R$ given by $\tilde{f}(x)= \int f(x+t)\, dm_K(t)$ is defined for $m$-almost every $x$, $\tilde{f}$ is $m_G$-measurable, and $\int \tilde f\, dm=\int f\, dm$.  In particular, if $A\subseteq G$ is measurable, then the function $x\mapsto m_K(A-x)$ is measurable, and $\int m_K(A-x)\, dm(x) = m(A)$. \hfill $\blacksquare$
\end{proposition}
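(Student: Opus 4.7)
The plan is to invoke Weil's quotient integral formula (Theorem 3.4.6 of \cite{ReiterStegeman}) in the special case of a compact abelian group and read off the conclusion, exactly as the author indicates. That theorem states, for a closed subgroup $K\leqslant G$ and $f\in L^1(m_G)$, that the function
\begin{equation*}
\tilde f(x):=\int_K f(x+t)\,dm_K(t)
\end{equation*}
is defined for $m_G$-almost every $x$, is $K$-invariant (hence descends to a measurable function on $G/K$), and satisfies $\int_G f\,dm_G=\int_{G/K}\tilde f\,dm_{G/K}$, provided the three Haar measures $m_G$, $m_K$, $m_{G/K}$ are chosen compatibly.

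The first step is to verify that with $G$ compact and the standing normalizations $m_G(G)=m_K(K)=m_{G/K}(G/K)=1$, the Weil formula holds without any extra multiplicative constant. This is a one-line consistency check: taking $f\equiv 1$ gives $\tilde f\equiv m_K(K)=1$, and both sides equal $1$, so the normalizations are indeed compatible.

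Next I would convert the right-hand side from an integral over $G/K$ into an integral over $G$. Since $\tilde f$ is $K$-invariant by translation-invariance of $m_K$ (replacing $t$ by $t-s$ shows $\tilde f(x+s)=\tilde f(x)$ for $s\in K$), applying the Weil formula to the function $\tilde f$ itself yields
\begin{equation*}
\int_G \tilde f\,dm_G \;=\; \int_{G/K}\Bigl(\int_K \tilde f(x+t)\,dm_K(t)\Bigr)\,dm_{G/K}(\dot x) \;=\; \int_{G/K}\tilde f\,dm_{G/K},
\end{equation*}
where the second equality uses $K$-invariance of $\tilde f$ together with $m_K(K)=1$. Chaining with the previous identity gives $\int \tilde f\,dm = \int f\,dm$, as required.

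Finally, the ``in particular'' clause is just the specialization $f=\mathbf 1_A$: then $\tilde f(x)=m_K((A-x)\cap K)$, which is the quantity abbreviated as $m_K(A-x)$ in the statement, and the displayed integration identity reduces to $\int m_K(A-x)\,dm(x)=m(A)$. There is no substantive obstacle here; the genuine analytic content (existence almost everywhere, measurability, and the disintegration identity) is already contained in the cited theorem, so the only work is cosmetic — matching normalizations and carrying out the $K$-invariance trick above to phrase the conclusion as an integral over $G$ rather than $G/K$.
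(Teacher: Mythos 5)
Your argument is correct and is essentially the paper's own route: the paper gives no proof beyond the remark that the proposition ``may be obtained by specializing Theorem 3.4.6 of \cite{ReiterStegeman} to the case where $G$ is compact and abelian,'' and your write-up is exactly that specialization, with the normalization check and the passage from an integral over $G/K$ to an integral over $G$ (via $K$-invariance of $\tilde f$) filled in correctly.
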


\subsubsection{Unique expression elements}\label{uees} We say $c\in A+B$ is a \textit{unique expression element of} $A+B$ if $c=a_0+b_0$ for some $a_0\in A$, $b_0\in B$, and $a+b=c$ implies $a=a_0$ and $ b=b_0$  when  $a\in A$ and $b\in B$.  Unique expression elements play an important role in \cite{Kemperman60}, the classification of pairs $(A,B)$ satisfying $|A+B|=|A|+|B|-1$; Corollary \ref{QPfour} connects that classification to conclusion (QP) of Theorem \ref{main}.  If $K\leqslant G$ is a subgroup, the phrase ``$C+K$ is a unique expression element of $A+B+K$ in $G/K$" means that $C+K= a_0+b_0+K$ for some $a_0\in A$, $b_0\in B$, and whenever $a\in A$, $b\in B$, and $a+b+K=C+K$, then $a+K=a_0+K$ and $b+K=b_0+K$.

\subsubsection{Critical and sur-critical pairs}\label{critdef}  If $A$, $B\subseteq G$ are measurable sets satisfying $m_*(A+B)=m(A)+m(B)$, we call $(A,B)$ a \textit{sur-critical pair} for $G$.  We call a pair $(A,B)$ satisfying $m_*(A+B)<m(A)+m(B)$ a \textit{critical pair} for $G$.

\subsubsection{Similarity and the essential stabilizer}\label{Hdef}  If $C$, $D\subseteq G$, we write $C\sim D$ to mean $m(C \triangle D)=0$, and  we write $C\subset_m D$ to mean $m(C\setminus D)=0$.

\smallskip

When $C$ is measurable and $m(C)<\infty$, the group $H(C):=\{t\in G: C+t\sim C\}$ is a compact subgroup of $G$ (\cite{Kneser56}, Lemma 4); we may refer to $H(C)$ as the \textit{essential stabilizer} of $C$.  The group $H(A+B)$ plays an important role in \cite{Kneser56} and in the proof of Theorem \ref{main}.  Proposition \ref{Ktopinequality} says that if $m_*(A+B)<m(A)+m(B)$ then $H(A+B)$ is compact and open, and $A+B+H(A+B)=A+B$.  The equation $C=C+H(C)$ may fail in general, and more importantly, the similarity $C\sim C+H(C)$ may fail.  For example, take $G= (\mathbb Z/5\mathbb Z) \times \mathbb T$, and let $C= (\{0\}\times \mathbb T) \cup \{(1,0)\}$.  Then $H(C)=\{0\}\times \mathbb T$, so $m(C+H(C))= 2/5$, while $m(C)=1/5$.

\subsection{Special sets and special pairs}\label{setsandpairs}

\subsubsection{Periodicity}\label{periodicdef}  If $C\sim C+K$ for some some compact open subgroup $K\leqslant G$,  we call $C$ \textit{periodic} with \textit{period} $K$.  Otherwise, we call $C$ \textit{aperiodic}.  Note that when $G$ is not discrete, the assertion $H(C)=\{0\}$ implies $C$ is aperiodic, but an aperiodic $C$ may have $m(H(C))>0$.   We will exploit the following  relation between the essential stabilizer and periodicity. This observation is a consequence of Lemma \ref{H(S)}, although it may be obtained by more elementary means.

\begin{observation} If $m(H(C))>0$, then for all cosets $H_i$ of $H(C)$, either $m(C\cap H_i)=0$ or $m(C\cap H_i)=m(H_i)$.  Consequently, if $m(H(C))>0$, there is a measurable subset $C'\subseteq C$ such that $C\sim C'\sim C'+H(C)$.  \hfill $\blacksquare$
\end{observation}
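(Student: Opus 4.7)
The plan starts by noting that the compact subgroup $H := H(C)$ (from Lemma 4 of \cite{Kneser56}) must in fact be open. By the Steinhaus lemma applied to the difference set $H - H = H$, any closed subgroup with positive Haar measure contains a neighborhood of $0$; so the hypothesis $m(H) > 0$ upgrades $H$ to a compact open subgroup. Thus $G/H$ is discrete, each coset $H_i$ is clopen with $m(H_i) = m(H) \in (0,\infty)$, and the finiteness of $m(C)$ implies that $C$ meets only countably many cosets in positive measure.

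For the dichotomy, I would fix a coset $H_i = x_i + H$ and work inside $H$ with the set $E := (C - x_i) \cap H$, of measure $m(C \cap H_i)$. The defining property of $H(C)$ translates into $E + t \sim E$ for every $t \in H$, and a short Fubini calculation on $H \times H$ using translation invariance of the Haar measure on $H$ yields $m_H(E)\, m_H(H \setminus E) = 0$. The dichotomy $m(C \cap H_i) \in \{0, m(H_i)\}$ is immediate.

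To obtain the consequence, set $\mathcal I := \{i : m(C \cap H_i) = m(H_i)\}$, which is finite since $|\mathcal I|\,m(H) \leq m(C) < \infty$; let $U := \bigcup_{i \in \mathcal I} H_i$ (clopen) and $C' := C \cap U$. Then $C \setminus C'$ is an at-most-countable $m$-null union of sets $C \cap H_i$ with $i \notin \mathcal I$, so $C \sim C'$. Conversely each $H_i$ with $i \in \mathcal I$ meets $C'$ in a set of full measure (hence nonempty) and no other coset meets $C'$, so $C' + H = U$ and $(C' + H) \setminus C' = \bigcup_{i \in \mathcal I}(H_i \setminus C)$ is a finite null union; this gives $C' \sim C' + H$.

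The only nontrivial step is the Fubini dichotomy for translation-invariant subsets of $H$, which is elementary. The rest is bookkeeping, made transparent by the openness of $H$: cosets are clopen, measurability is automatic, and finiteness of $m(C)$ reduces every coset-indexed union appearing in the argument to an at-most-countable one.
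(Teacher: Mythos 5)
Your proof is correct, and it is essentially the ``more elementary means'' that the paper alludes to rather than the route it actually takes. The paper deduces the Observation from Lemma \ref{H(S)}, whose proof is a global Fubini argument using the disintegration of Haar measure over the cosets of $H(C)$ (Proposition \ref{Weil}): almost every point has coset-density $0$ or $1$, and since $m(H(C))>0$ each coset has positive measure, so no coset can consist entirely of exceptional points. You instead first upgrade $H(C)$ to an open subgroup via Steinhaus and then run the same convolution/Fubini identity localized to a single coset: $\int_H m_H(E\cap(E+t))\,dm_H(t)$ equals $m_H(E)^2$ by Fubini and equals $m_H(E)\,m_H(H)$ by the invariance $E+t\sim E$, forcing $m_H(E)\in\{0,m_H(H)\}$. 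This avoids Proposition \ref{Weil} entirely and is self-contained; the paper's route costs nothing extra because Lemma \ref{H(S)} is needed later anyway (in Lemma \ref{H(A+B)}), in a situation where $m(H(A+B))=0$ and a per-coset argument of your kind gives no information. One small caveat in your bookkeeping: the claim that $C\setminus C'$ is an at most countable union of the null sets $C\cap H_i$ is automatic when $G$ is compact (then $H$ has finite index, which is the only setting in which the paper uses the Observation), but in a general locally compact group the finiteness of $m(C)$ bounds only the number of cosets met in positive measure, not the number of cosets met at all; the clean repair is inner regularity: any compact subset of $C\setminus C'$ is covered by finitely many cosets indexed outside $\mathcal I$, each meeting $C$ in a null set, so $m(C\setminus C')=0$.
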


\begin{remark}  When $G$ is discrete, every subset of $G$ is periodic according to our definition.  This differs from the terminology of \cite{Grynkiewicz}, where a periodic set $S$ must satisfy $S+t \sim S$ for some $t\neq 0$.  \hfill $\blacksquare$
\end{remark}

\subsubsection{Extendibility and nonextendibility}   Let $A$ and $B$ be measurable subsets of a compact abelian group $G$.  We say that $A$ is \textit{extendible with respect to} $B$ if there is a measurable set $A'\supseteq A$ with $m(A')>m(A)$ and $m_*(A'+B)=m_*(A+B)$.  We say that the pair $(A,B)$ is \textit{extendible} if $A$ is extendible with respect to $B$ or $B$ is extendible with respect to $A$.  Otherwise, we say that $(A,B)$ is \textit{nonextendible}.  The nonextendibility of a pair $(A,B)$ may be expressed as follows:  if $A'\supseteq A$ and $B'\supseteq B$ are measurable and $m_*(A'+B')=m_*(A+B)$, then $A'\sim A$ and $B'\sim B$.

\smallskip

 When $m_*(A+B)=m(A)+m(B)$ and $(A,B)$ is extendible, Proposition \ref{Ktopinequality} implies $H(A+B)$ is compact and open, and $A+B\sim A+B+H(A+B)$.  Consequently, $A+B$ is measurable when $(A,B)$ is extendible.  The following example lemma  illustrates how nonextendibility will be exploited in subsequent proofs.

 \begin{lemma}  If $K\leqslant G$ is a compact open subgroup, $A+B\sim A+B+K$, and $(A,B)$ is nonextendible, then $A+K\sim A$.
 \end{lemma}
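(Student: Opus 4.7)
The plan is to apply the nonextendibility hypothesis to the candidate extension $A' := A + K$. First note that $A' \supseteq A$ because $0 \in K$, and $A'$ is open (hence measurable) because $K$ is open, so $A'$ is a legitimate measurable superset of $A$.

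Next I would verify that $m_*(A'+B) = m_*(A+B)$. Since $K$ is open, so is $A' + B = A + B + K$, and openness gives $m_*(A+B+K) = m(A+B+K)$. The hypothesis $A+B \sim A+B+K$, together with the inclusion $A+B \subseteq A+B+K$, says that $(A+B+K)\setminus(A+B)$ is a null set, which witnesses that $A+B$ is measurable with $m(A+B) = m(A+B+K)$. Chaining these equalities,
\[
m_*(A'+B) \;=\; m(A+B+K) \;=\; m(A+B) \;=\; m_*(A+B).
\]

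To conclude, I would invoke the reformulation of nonextendibility given just before the lemma: taking $B' := B$, the pair $(A',B')$ satisfies $A' \supseteq A$, $B' \supseteq B$, both measurable, with $m_*(A'+B') = m_*(A+B)$, so nonextendibility forces $A' \sim A$, which is precisely $A+K \sim A$. There is no real obstacle here; the only substantive step is the inner-measure identity above, and that reduces entirely to the openness of $K$ (giving openness of $A+B+K$) combined with the similarity hypothesis.
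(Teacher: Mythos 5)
Your proposal is correct and follows the same route as the paper: rewrite $A+B+K$ as $(A+K)+B$ and apply nonextendibility to the superset $A'=A+K$ (with $B'=B$) to conclude $A+K\sim A$. The only difference is that you spell out why the similarity $A+B\sim A+B+K$ yields $m_*(A'+B)=m_*(A+B)$ (openness of $A+B+K$ and measurability of $A+B$ in the completed measure), a detail the paper leaves implicit.
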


\begin{proof} The similarity $A+B\sim A+B+K$ can be rewritten as $(A+K)+B\sim A+B$.  Since $A+K\supseteq A$ and $(A,B)$ is nonextendible, we have $A+K\sim A$. \end{proof}

In subsequent proofs, such as those of Lemmas \ref{reduciblelemma1}, \ref{reduciblelemma2}, \ref{QP3}, and \ref{transformcase1},   we will omit the above reasoning.

\subsubsection{Complementary pairs}\label{complementarydef}  If $G$ is compact and $m(A+B)=m(A)+m(B)=1$, call $(A,B)$ a \textit{complementary} pair.  When $G$ is infinite, it is easy to construct such pairs $(A,B)$ with $A+B\neq G$: let $A\subseteq G$ be any measurable set meeting every coset of every finite index subgroup of $G$ with $0<m(A)<1$, and let $B\subseteq G$ have $m(B)=1-m(A)$.  Then $(A,B)$ is a complementary pair by Proposition \ref{Ktopinequality}.  In particular, if $G$ is connected and $A$ is any measurable subset of $G$, then $(A,-A^c)$ is a complementary pair.

\smallskip

Complementary pairs $(A,B)$ with $m(A)>0$ and $m(B)>0$ satisfying $A+B\neq G$ can be described as follows.  If $(A,B)$ is a complementary pair, and $A+B\neq G$, then $A\cap (t-B)=\varnothing$ for some $t\in G$, so $t-B\sim A^c$.  If $s\notin H(t-B)$ ($=H(A^c)=H(A)$), then $m(A\cap (t+s-B))>0$, so $t+s\in A+B$.  It follows that $A+B$ contains a translate of $G\setminus H(A)$.

\smallskip

If $K\leqslant G$ is a compact open subgroup, $A$ and $B$ are each contained in a coset of $K$, and $m(A+B)=m(A)+m(B)=m(K)$, we say that $(A,B)$ is \textit{complementary with respect to} $K$.  Such pairs form a subclass of the extendible pairs.

\subsubsection{Reducibility}\label{reducibledef}  If there are measurable subsets $A'\subseteq A$ and $B'\subseteq B$ such that $m(A')=m(A)$, $m(B')=m(B)$, and $m_*(A'+B')<m_*(A+B)$, we say that $(A,B)$ is \textit{reducible}.  Otherwise, we say $(A,B)$ is \textit{irreducible}.

\subsubsection{Quasi-periodicity}\label{qpdef}  Let $K\leqslant G$ be a compact open subgroup. A set $A\subseteq G$ is called \textit{quasi-periodic with respect to $K$} if $A$ can be partitioned into two nonempty sets $A_1$ and  $A_0$ such that $(A_1+K)\cap (A_0+K)=\varnothing$, $A_1\sim A_1+K$, and $A_0$ is contained in a coset of $K$.  We call $A_1\cup A_0$ a \textit{quasi-periodic decomposition} of $A$, and we say $K$ is a \textit{quasi-period} of $A$.  Note that $A$ may have more than one quasi-period.

\smallskip

Call a pair $(A,B)$ of subsets of $G$  \textit{quasi-periodic with respect to $K$} if one of $A$ or $B$ is quasi-periodic with respect to $K$ and the other is either contained in a coset of $K$ or is quasi-periodic with respect to $K$.  This means that $A=A_1\cup A_0$, $B=B_1\cup B_0$, $A_1\sim A_1+K$, $B_1\sim B_1+K$, $A_0$ and $B_0$ are nonempty and contained in cosets of $K$, $(A_1+K)\cap A_0=(B_1+K)\cap B_0=\varnothing$, and at least one of $A_1$ and $B_1$ is nonempty.  We say $A_1\cup A_0$, $B_1\cup B_0$ is a \textit{quasi-periodic decomposition of} $(A,B)$, and $K$ is a \textit{quasi-period} of $(A,B)$.  Note that if $(A,B)$ is quasi-periodic with respect to $K$, then $A+B$ is quasi-periodic with respect to $K$.

\begin{remark}  Our insistence that $A_1$ be nonempty for $A$ to be quasi-periodic differs from the convention of \cite{Grynkiewicz}; the analogous definition in \cite{Grynkiewicz} allows $A_1=\varnothing$. \hfill $\blacksquare$
\end{remark}

\subsubsection{Repeated decompositions}\label{repeateddef}  If $A_1\cup A_0$ is a quasi-periodic decomposition of $A$ with quasi-period $K_0$ and $A_0= C_1\cup C_0$ is a quasi-periodic decomposition of $A_0$ with quasi-period $K_1\leqslant K_0$, then setting $A_0'=C_0$ and $A_1'=A\setminus C_0$, the partition $A=A_1'\cup A_0'$ is a quasi-periodic decomposition of $A$ with respect to $K_1$.  Consequently, if $(A,B)$ satisfies (QP) of Theorem \ref{main} with $A=A_1\cup A_0$, $B=B_1\cup B_0$, and subgroup $K=K_1\leqslant G$, and $(A_0,B_0)$ also satisfies (QP) with subgroup $K=K_2\leqslant G$, then $(A,B)$ satisfies (QP) with subgroup $K=K_2\leqslant G$.

\section{Consequences of Theorem \ref{main}}\label{corollarysection}

\subsection{Conclusion (QP) and critical pairs for finite groups}\label{remark2}  Sur-critical pairs (\S \ref{critdef}) satisfying (QP) of Theorem \ref{main} can be further described in terms of certain critical pairs for finite groups, as the following corollary shows.

\begin{corollary}\label{QPfour}  With the hypotheses and notation of Theorem \textup{\ref{main}}, if $(A,B)$ satisfies conclusion \textup{(QP)}, then with $K$ being the subgroup of that conclusion,
\begin{enumerate}
\item[(QP.4)]  $m(A+B+K)=m(A+K)+m(B+K)-m(K)$.
\end{enumerate}
\end{corollary}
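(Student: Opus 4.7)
The plan is to compute $m(A+B+K)$ by decomposing $A+B+K$ into four $K$-invariant pieces corresponding to the four sub-sumsets $A_i+B_j$, and then use QP.2 to isolate the single coset contributed by $A_0+B_0$.

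Fix $a_0 \in A_0$, $b_0 \in B_0$, and set $U = (A_1+K)+(B_1+K)$, $V = (A_1+K)+b_0$, $W = a_0+(B_1+K)$, and $Z = a_0+b_0+K$; these are measurable $K$-invariant subsets of $G$. Using $A_1 \sim A_1+K$, $B_1 \sim B_1+K$ together with $A_0 \subseteq a_0+K$ and $B_0 \subseteq b_0+K$, one verifies $A+B+K = U \cup V \cup W \cup Z$; QP.1 implies $Z \cap V = Z \cap W = \varnothing$, and QP.2 (unique expression) implies $Z \cap U = \varnothing$. Hence $m(A+B+K) = m(U \cup V \cup W) + m(K)$, and the task reduces to showing $m(U \cup V \cup W) = m(A_1+K) + m(B_1+K)$.

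For this, let $R = (A_1+B_1) \cup (A_1+B_0) \cup (A_0+B_1)$, so $R \subseteq U \cup V \cup W$, $A_0+B_0 \subseteq Z$, and $A+B = R \cup (A_0+B_0)$ is a disjoint union inside the disjoint measurable union $(U \cup V \cup W) \cup Z$. Additivity of inner measure across disjoint measurable containers then gives $m_*(A+B) = m_*(R) + m_*(A_0+B_0)$; combining with the hypothesis $m_*(A+B) = m(A)+m(B)$, with QP.3, and with $m(A_1) = m(A_1+K)$, $m(B_1) = m(B_1+K)$ yields $m_*(R) = m(A_1+K) + m(B_1+K)$. Conversely, $R$ contains the measurable set $R' := (A_1+B_1) \cup (A_1+b_0) \cup (a_0+B_1)$, and $R' \sim U \cup V \cup W$: the similarities $A_1+b_0 \sim V$ and $a_0+B_1 \sim W$ follow from $A_1 \sim A_1+K$ and $B_1 \sim B_1+K$, while $A_1+B_1 = U$ (when both are nonempty; otherwise both are empty) by a ``full-measure cosets'' argument---for $u \in U$, write $u = a+b+k$ with $a \in A_1$, $b \in B_1$, $k \in K$; then $A_1 \cap (a+K)$ and $u-(B_1 \cap (b+K))$ are subsets of the coset $a+K$ each of measure $m(K)$, so they must intersect, witnessing $u \in A_1+B_1$. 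Thus $m_*(R) \geq m(R') = m(U \cup V \cup W)$, forcing equality.

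Putting the pieces together, $m(A+B+K) = m(A_1+K) + m(B_1+K) + m(K) = m(A+K) + m(B+K) - m(K)$, as required. The main obstacle is the set-theoretic identity $A_1+B_1 = U$, established by the coset-by-coset intersection argument above; the remaining steps are inner-measure bookkeeping driven by QP.2 and QP.3.
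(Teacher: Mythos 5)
Your proof is correct and follows essentially the same route as the paper's: isolate the unique-expression coset $A_0+B_0+K$, use the sur-critical hypothesis together with (QP.3) to account for the measure inside that coset, and use the $K$-periodicity of $A_1$ and $B_1$ from (QP.1) to identify the measure outside it with $[m(A+K)-m(K)]+[m(B+K)-m(K)]$. The only difference is that you spell out the inner-measure additivity across the distinguished coset and the full-measure-coset argument (e.g.\ $A_1+B_1=(A_1+K)+(B_1+K)$) which the paper's short computation leaves implicit.
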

Viewing $A'=A+K$ and $B'=B+K$ as subsets of the discrete group $G/K$, (QP.4) means $|A'+B'|=|A'|+|B'|-1$, while (QP.2) says that $A'+B'$ has a unique expression element.  The results of \cite{Kemperman60} may be used to classify such pairs $(A',B')$; see \S 2 of \cite{GrynkiewiczKemperman} for exposition.

\begin{proof} Let $A=A_1\cup A_0, $ $B=B_1\cup B_0$ constitute a pair satisfying (QP) and $m_*(A+B)=m(A)+m(B)$, and let $K$ be the corresponding subgroup. Note that \begin{align}\label{aka1k}
m(A+K)=m(A_1)+m(K) = m(A)-m(A_0)+m(K),
\end{align} and a similar identity holds for $m(B+K)$.   Since $A_0+B_0+K$ is a unique expression element of $A+B+K$ in $G/K$, we have
\begin{align*}
m(A+B+K)&= m_*(A+B)-m_*(A_0+B_0)+m(K)\\
&=m(A)-m(A_0)+m(B)-m(B_0)+m(K) \\
&=[m(A+K)-m(K)]+[m(B+K)-m(K)]+m(K)  \\
&= m(A+K)+m(B+K)-m(K),
\end{align*}
where the where the second line uses the hypothesis and (QP.3) of Theorem \ref{main}, and the third uses (\ref{aka1k}).  \end{proof}

As a counterpart to Corollary \ref{QPfour}, the following procedure produces sur-critical pairs satisfying (QP) from critical pairs for finite groups.

\smallskip

Let $K\leqslant G$ be a compact open subgroup so that the quotient $F=G/K$ is discrete; write $\phi:G\to F$ for the quotient map.   Let $(A',B')$ be a critical pair for $F$ such that $A'+B'$ has a unique expression element and $|A'+B'|=|A'|+|B'|-1$.  Pick $a'\in A'$, $b'\in B'$ so that $a'+b'$ is a unique expression element of $A'+B'$.  Define $A_1':=A'\setminus \{a'\}$, and $B_1':=B'\setminus \{b'\}$.  Set $A_1=\phi^{-1}(A_1')$, $B_1=\phi^{-1}(B_1')$, and choose $C\subseteq K$ and $D\subseteq K$ such that $m_*(C+D)=m(C)+m(D)$.  Let $A_0=\phi^{-1}(a')+C$, $B_0=\phi^{-1}(b')+D$, and let $A=A_1\cup A_0$, $B=B_1\cup B_0$.   Then
\begin{align*}
m(A)=m(K)(|A'|-1)+m(C),\  m(B)=m(K)(|B'|-1)+m(D),
  \end{align*}
and
\begin{align*}
m_*(A+B)&=m(K)(|A'+B'|-1)+m_*(C+D)\\
&=m(K)(|A'|+|B'|-2)+m(C)+m(D)\\
&=m(A)+m(B).
\end{align*}

\subsection{Measurability and topology of $A$, $B$, and $A+B$.}\label{essreg}
\subsubsection{Essential regularity.}  For $S\subseteq G$, let $\overline{S}$ denote the topological closure of the set $S$, and let $\operatorname{int}S$ denote the interior of $S$.  Call a set $S\subseteq G$ \textit{essentially regular} if $m_*(S)=m(\overline{S})=m(\operatorname{int} \overline{S})$, and call a pair $(A,B)$ \textit{essentially regular} if $A$, $B$, and $A+B$ are essentially regular.  Proposition \ref{Kconnected} implies $(A,B)$ is essentially regular when $G$ is a compact connected group and $A$, $B\subseteq G$ have $m(A)>0$, $m(B)>0$, and $m(A+B)=m(A)+m(B)$.  Corollary \ref{measurableremark} below shows that restricting the measures of $A$ and $B$ in Theorem \ref{main} can guarantee that $(A,B)$ is essentially regular.   For this we need the following lemma.

\begin{lemma}\label{interior}  If for all $\varepsilon>0$, $(A,B)$ has a quasi-periodic decomposition \textup{(\S \ref{qpdef})} with respect to a compact open subgroup $K$ having $0<m(K)<\varepsilon$, then $(A,B)$ is essentially regular.
\end{lemma}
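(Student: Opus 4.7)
The plan is to fix $\varepsilon > 0$, choose a quasi-periodic decomposition $A = A_1 \cup A_0$, $B = B_1 \cup B_0$ of $(A,B)$ with quasi-period $K = K_\varepsilon$ satisfying $m(K) < \varepsilon$, and to show that for each $S \in \{A, B, A+B\}$ the three quantities $m_*(S)$, $m(\operatorname{int}\overline{S})$, $m(\overline{S})$ lie in a common interval of length at most $m(K) < \varepsilon$. Since none of these quantities depends on $\varepsilon$, sending $\varepsilon \to 0$ then forces the three to coincide, which is essential regularity of $(A,B)$. The central technical tool is the following density observation: if $T \subseteq G$ is measurable with $T \sim T + K$ for a compact open subgroup $K$, then any nonempty open $U \subseteq T + K$ satisfies $m(U \cap T) = m(U) > 0$, so $T$ is dense in the open set $T + K$; thus $T + K \subseteq \overline{T}$, and since $T + K$ is itself open, $T + K \subseteq \operatorname{int}\overline{T}$.

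For $S = A$ (the argument for $B$ is identical), the decomposition gives $A \subseteq (A_1 + K) \cup H_{A_0}$, where $H_{A_0}$ is the coset of $K$ containing $A_0$. This enclosing set is a union of cosets of $K$, hence clopen, so $\overline{A} \subseteq (A_1 + K) \cup H_{A_0}$ as well. Applying the density observation to $T = A_1$ yields $A_1 + K \subseteq \operatorname{int}\overline{A}$, so
\begin{align*}
m(A_1) \;\leq\; m(\operatorname{int}\overline{A}) \;\leq\; m(\overline{A}) \;\leq\; m(A_1) + m(K),
\end{align*}
and $m_*(A) = m(A) = m(A_1) + m(A_0)$ lies in the same interval of length $m(K)$.

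For $S = A+B$, I would exploit the observation recorded in \S\ref{qpdef} that $A+B$ is itself quasi-periodic with quasi-period $K$. Set $P := (A_1 + K + B) \cup (A + B_1 + K)$, a union of cosets of $K$ satisfying $A + B + K = P \cup (A_0 + B_0 + K)$; the coset $A_0 + B_0 + K$ is the only potentially ``defective'' one. A coset-by-coset density analysis shows $(A+B) \cap C \sim C$ for every coset $C \subseteq P$: for example, when $C = a + b + K$ arises from $a \in A_1$ and $b \in B_0$, the translate $(A_1 \cap (a + K)) + b$ lies in $(A+B) \cap C$ and has full measure $m(K)$ in $C$; the cases $(a,b) \in A_1 \times B_1$ and $A_0 \times B_1$ are handled analogously, and remain valid when one (but not both) of $A_1$, $B_1$ is empty. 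Summing over cosets gives $m_*((A+B) \cap P) = m(P)$ and shows $(A+B) \cap P$ is dense in $P$, so $P \subseteq \operatorname{int}\overline{A+B}$. Combined with $\overline{A+B} \subseteq A+B+K \subseteq P \cup (A_0 + B_0 + K)$ (the first inclusion using that $A+B+K$ is clopen), all three measures of $A+B$ lie in $[m(P), m(P) + m(K)]$.

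The main obstacle is the bookkeeping for $A+B$: one must verify $(A+B) \cap C \sim C$ in each of the cases $(a,b) \in A_i \times B_j$ with $(i,j) \neq (0,0)$, track which cosets $a+b+K$ can coincide across cases (so that $A_0 + B_0 + K$ is genuinely the only possibly defective coset), and dispose of the boundary situations where exactly one of $A_1$, $B_1$ is empty. The analyses for $A$ and $B$ individually are routine once the density observation is in hand.
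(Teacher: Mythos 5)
Your proposal is correct and is essentially the paper's own argument: sandwich $m_*(S)$, $m(\operatorname{int}\overline{S})$, and $m(\overline{S})$ between $m(S+K)-m(K)$ and $m(S+K)$ using that the periodic part is dense in the clopen set it generates, then let $\varepsilon\to 0$. The paper simply states these bounds for $A$ and says ``the same argument'' handles $B$ and $A+B$; your coset-by-coset analysis of $P=(A_1+K+B)\cup(A+B_1+K)$ is just the elided verification of that step.
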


\begin{proof}  Let $\varepsilon>0$.  For a given quasi-period $K$ of $(A,B)$ having $0<m(K)<\varepsilon$, write $A=A_1\cup A_0$, where $A_1\sim A_1+K$ and $A_0$ is contained in a coset of $K$.  Then $m(\overline{A})\leq m(A+K)\leq m(A)+\varepsilon$, while $m(\operatorname{int}\overline{A})\geq m(A+K)-m(K)\geq m(A)-\varepsilon$.
Letting $\varepsilon\to 0$, we get that $m(A)=m(\overline{A})=m(\operatorname{int}\overline{A})$. The same argument applied to $B$ and $A+B$ shows that $B$ and $A+B$ are essentially regular.  \end{proof}

\subsubsection{Refinement of Theorem \textup{\ref{main}}} In conclusion (QP) of Theorem \ref{main}, possibly $m(A_0)=0$, $m(B_0)=0$, or both.  When $m(A_0)>0$ and $m(B_0)>0$, (QP.3) guarantees that Theorem \ref{main} applies to the pair $(A_0,B_0)$.  This observation yields the following corollary.

\begin{corollary}\label{roman}  With the hypotheses and notation of Theorem \textup{\ref{main}}, at least one of the following is true\textup{:}
\begin{enumerate}
\item[\textup{(I)}]  One of \textup{(P)}, \textup{(E)}, or \textup{(K)} holds.
\item[\textup{(II)}]  Conclusion \textup{(QP)} holds, and $(A_0,B_0)$ satisfies \textup{(K)}.
\item[\textup{(III)}]  For all $\varepsilon>0$, there is a compact open subgroup $H\leqslant G$  having $m(H)<\varepsilon$ and $(A,B)$ satisfies \textup{(QP)} with $K=H$.
\item[\textup{(IV)}]  $(A,B)$ satisfies \textup{(QP)} with one at least one of $m(A_0)=0$ or $m(B_0)=0$.
\end{enumerate}
\end{corollary}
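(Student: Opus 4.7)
The plan is to apply Theorem \ref{main} iteratively, first to $(A,B)$, and then to any leftover pair $(A_0,B_0)$ arising from a quasi-periodic decomposition, refining the quasi-period at each step.

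First, we apply Theorem \ref{main} to $(A,B)$. If (P), (E), or (K) holds, we are in case (I). Otherwise (QP) holds with some compact open subgroup $K_0$ and decomposition $A=A_1\cup A_0$, $B=B_1\cup B_0$. If $m(A_0)=0$ or $m(B_0)=0$, we are in case (IV). So assume $m(A_0),m(B_0)>0$; by (QP.3), $(A_0,B_0)$ is itself a sur-critical pair, and Theorem \ref{main} applies to it. If (K) holds for $(A_0,B_0)$, then case (II) holds. The remaining possibilities---(P), (E), or (QP) for $(A_0,B_0)$---must either collapse back into case (I) or set up the next iteration.

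If (P) holds for $(A_0,B_0)$ with subgroup $K_1$, we first check $K_1\le K_0$: any $k\in K_1\setminus K_0$ would translate $A_0$ into a disjoint $K_0$-coset, forcing $m(A_0\cap(A_0+k))=0$ and contradicting $A_0+k\sim A_0$ when $m(A_0)>0$. Then $A_1+K_1\sim A_1$ (since $A_1+K_0\sim A_1$ and $K_1\le K_0$) and similarly for $B_1$, so $(A,B)$ is $K_1$-periodic, placing us in case (I). If (E) holds for $(A_0,B_0)$ with extensions $A_0'\supseteq A_0$ and $B_0'\supseteq B_0$, we argue $A_0'\subseteq a+K_0$ (where $A_0\subseteq a+K_0$): an element $a''\in A_0'$ outside $a+K_0$ would give $m_*(A_0'+B_0')\ge m_*(A_0+B_0)+m(B_0)>m_*(A_0+B_0)$ by additivity of inner measure across disjoint $K_0$-cosets, contradicting (E). Similarly $B_0'\subseteq b+K_0$. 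Setting $A'=A_1\cup A_0'$ and $B'=B_1\cup B_0'$, the unique expression property (QP.2) places the pieces $A_1+B_1$, $A_1+B_0'$, $A_0'+B_1$ in $K_0$-cosets disjoint from $a+b+K_0$, while $A_0'+B_0'\subseteq a+b+K_0$; combined with the identities $A_1+B_0\sim A_1+B_0'$ and $A_0+B_1\sim A_0'+B_1$ (each side having the same measure by $K_0$-periodicity of $A_1$ and $B_1$), this yields $m_*(A'+B')=m_*(A+B)$ while $m(A')+m(B')>m(A)+m(B)$, placing $(A,B)$ in case (I).

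If (QP) holds for $(A_0,B_0)$ with subgroup $K_1$: once more $K_1\le K_0$, and in fact $K_1<K_0$ strictly---if $K_1=K_0$, the nonempty $K_1$-periodic part of the decomposition of $A_0$ or $B_0$ would have measure at least $m(K_1)=m(K_0)$ and fill the $K_0$-coset containing it, forcing $m(A_0)=m(K_0)$ or $m(B_0)=m(K_0)$, which would in turn give (P) for $(A,B)$ after absorbing that coset into $A_1$ or $B_1$. By the repeated decomposition procedure of \S \ref{repeateddef}, $(A,B)$ satisfies (QP) with $K_1$, and we iterate on the new leftover. If the iteration never lands in (I), (II), or (IV), we produce a strictly descending chain $K_0>K_1>K_2>\cdots$ of compact open subgroups, each of index at least $2$ in the previous, so $m(K_i)\le m(K_0)/2^i\to 0$. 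For any $\varepsilon>0$, some $K_i$ has $m(K_i)<\varepsilon$ and $(A,B)$ satisfies (QP) with $K_i$, yielding case (III).

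The main obstacle is the (E) case, which hinges on combining the unique expression property (QP.2) with additivity of inner measure on disjoint $K_0$-cosets: first, to force $A_0'\subseteq a+K_0$ and $B_0'\subseteq b+K_0$, and second, to show $m_*(A'+B')=m_*(A+B)$ using the $K_0$-periodicity of $A_1$ and $B_1$. A secondary subtlety is the strict descent $K_1<K_0$ in the (QP) subcase, which depends on the failure of (P) for $(A,B)$ forcing $m(A_0)<m(K_0)$, together with the fact that a nonempty $K_1$-periodic set has measure at least $m(K_1)$.
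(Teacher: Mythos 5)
Your proposal follows the same route as the paper's proof: iterate Theorem \ref{main} on the leftover pair of each (QP) decomposition, use the repeated-decomposition remark of \S\ref{repeateddef} to transfer (QP) back to $(A,B)$ with the smaller subgroup, send (K) to case (II), a null leftover to (IV), collapse (P)/(E) for the sub-pair back to case (I), and obtain (III) from a non-terminating strictly descending chain of subgroups with $m(K_i)\le m(K_0)/2^i$. The extra details you supply for the (P)- and (E)-collapses (which the paper asserts without proof) are correct: the containments $K_1\le K_0$ and $A_0'\subseteq a+K_0$, $B_0'\subseteq b+K_0$ follow from superadditivity of inner measure over distinct $K_0$-cosets, using $m(A_0),m(B_0)>0$, and the verification $m_*(A'+B')=m_*(A+B)$ via (QP.2) and the $K_0$-periodicity of $A_1$, $B_1$ is sound.

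The one step whose justification fails as written is the strict descent $K_1<K_0$ in the (QP) subcase. You argue that $K_1=K_0$ would force $m(A_0)=m(K_0)$ or $m(B_0)=m(K_0)$ and hence ``(P) for $(A,B)$ after absorbing that coset,'' and in your closing summary you say the failure of (P) for $(A,B)$ forces $m(A_0)<m(K_0)$. But (P) requires a common subgroup $K$ with \emph{both} $A+K\sim A$ and $B+K\sim B$; knowing $m(A_0)=m(K_0)$ only yields $A+K_0\sim A$, so neither implication holds. The descent is nonetheless true, for a reason already in your hands: since $A_0+B_0$ lies in a single $K_0$-coset, (QP.3) gives $m(A_0)+m(B_0)=m_*(A_0+B_0)\le m(K_0)$, and as $m(B_0)>0$ in this branch, $m(A_0)<m(K_0)$ (and symmetrically $m(B_0)<m(K_0)$), so no nonempty $K_0$-periodic part fits inside $a+K_0$ or $b+K_0$. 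Alternatively and even more directly: if $K_1=K_0$ and, say, the periodic part $A_{01}$ of $A_0$ is nonempty, then $A_{01}+K_1=a+K_0\supseteq A_{00}\neq\varnothing$, contradicting the disjointness $(A_{01}+K_1)\cap A_{00}=\varnothing$ demanded by (QP.1) for the sub-decomposition. With that repair your argument coincides with the paper's proof, which simply asserts both the (P)/(E)-collapse and the properness of $K^{(n+1)}$ in $K^{(n)}$.
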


\begin{proof}  If $(A,B)$ satisfies (P), (E), or (K), we have (I). If not, inductively form a sequence of pairs $(A^{(n)}, B^{(n)})$  and subgroups $K^{(n)}\leqslant G$ as follows:  let $(A^{(0)}, B^{(0)})=(A,B)$, and let $K^{(0)}=G$.  Suppose $(A^{(j)},B^{(j)})$ and $K^{(j)}$ are defined for $j=0,\dots,n$ and for each $j=1,\dots, n$,
\begin{enumerate}
\item[(1)]
$A=A'_1\cup A^{(j)}$, $B=B'_1\cup B^{(j)}$ is a quasi-periodic decomposition of $(A,B)$ with respect to $K^{(j)}$ satisfying (QP) of Theorem \ref{main}, so that
\item[(2)] $A^{(j)}$ and $B^{(j)}$ are each contained in cosets of $K^{(j)}$,
\item[(3)] $m_*(A^{(j)}+B^{(j)})=m(A^{(j)})+m(B^{(j)})$, and
\item[(4)] $K^{(j)}$ is a proper subgroup of $K^{(j-1)}$.
\end{enumerate}
We will construct $(A^{(n+1)},B^{(n+1)})$ and $K^{(n+1)}\leqslant K^{(n)}$ satisfying (1)-(4), or show that one of (I), (II), or (IV) holds.

\smallskip

If one or both of $m(A^{(n)})=0$ or $m(B^{(n)})=0$, we have conclusion (IV).  Otherwise, apply Theorem \ref{main} to $(A^{(n)},B^{(n)})$.  If $(A^{(n)},B^{(n)})$ satisfies (P) or (E), then so does $(A,B)$, and we have (I).  If ($A^{(n)}, B^{(n)})$ satisfies (K), we have (II).  Otherwise, $(A^{(n)},B^{(n)})$ satisfies (QP), so take $K^{(n+1)}$ to be corresponding subgroup $K$, and let $A^{(n)}= A^{(n)}_1\cup A^{(n)}_0$, $B^{(n)}=B^{(n)}_1\cup B^{(n)}_0$ be the corresponding decompositions. Observe that $K^{(n+1)}$ must be a proper subgroup of $K^{(n)}$, so that $m(K^{(n+1)})\leq \frac{1}{2}m(K^{(n)})$.  Now take $A^{(n+1)}=A^{(n)}_0$ and $B^{(n+1)}=B^{(n)}_0$, so that $(A^{(n+1)},B^{(n+1)})$ satisfies (1)-(4) with $j=n+1$ (cf.~\S \ref{repeateddef}).

\smallskip

If the above construction terminates, we conclude (I), (II), or (IV).  Otherwise, we have for each $n$ quasi-periodic decompositions with respect to $K^{(n)}$, and we conclude (III).
\end{proof}

\begin{corollary}\label{measurableremark}  Suppose $(A,B)$ satisfies the hypotheses of Theorem \textup{\ref{main}}.
\begin{enumerate}
\item[\textup{(a)}]  If $A+B$ is not measurable, then $(A,B)$ satisfies \textup{(QP)} with at least one of $m(A_0)=0$ or $m(B_0)=0$.

\smallskip

\item[\textup{(b)}]  If $(A,B)$ is not essentially regular, then $(A,B)$ satisfies conclusion \textup{(E)} of Theorem \textup{\ref{main}}, or $(A,B)$ satisfies conclusion \textup{(QP)} of Theorem \textup{\ref{main}} with one or both of $m(A_0)=0$ or $m(B_0)=0$.

\smallskip

\item[\textup{(c)}]  If $m(A)$, $m(B)$, and $m(A+B)$ are all irrational numbers, then $(A,B)$ is essentially regular.
\end{enumerate}
\end{corollary}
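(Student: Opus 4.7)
The plan is to apply Corollary \ref{roman} to split into cases (I)--(IV), verify that essential regularity holds in every subcase except (E) and (IV), and then read off parts (a), (b), (c) directly.

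The heart of the argument is establishing essential regularity in (P), (K), (II), and (III). In (P), where $A\sim A+K$ and $B\sim B+K$ for a compact open subgroup $K$, the inclusion $A\subseteq A+K$ with equality of measure forces $A$ to have full Haar measure in each coset of $K$ it meets, and likewise for $B$. I claim this upgrades to the exact identity $A+B=A+B+K$: given $x\in \alpha+\beta+K$ with $\alpha\in A$, $\beta\in B$, the sets $A\cap(\alpha+K)$ and $x-(B\cap(\beta+K))$ are both full--measure subsets of $\alpha+K$, so they intersect, producing $x\in A+B$. Since any open Haar--null set in an LCA group is empty, $A+K=\overline{A}=\operatorname{int}\overline{A}$, and likewise for $B$ and $A+B$. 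In (K), the same ``open null set is empty'' trick applied to $\chi^{-1}(\operatorname{int} I)\setminus\overline{A}$ gives $\chi^{-1}(\operatorname{int} I)\subseteq \operatorname{int}\overline{A}\subseteq \overline{A}\subseteq a+\chi^{-1}(I)$; sur--criticality forces $m(I)+m(J)\leq 1$, so the analogous inclusions for $A+B\subseteq a+b+\chi^{-1}(I+J)$ give essential regularity of $A+B$. In (II), the periodic portions $A_1,B_1$ of $(A,B)$ are handled as in (P), the Kneser portion $(A_0,B_0)$ as in (K); condition (QP.2) ensures these pieces lie in distinct cosets of $K$, so the whole pair is essentially regular. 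Case (III) is exactly the hypothesis of Lemma \ref{interior}.

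Part (a) is then immediate: if $A+B$ is not measurable, we cannot be in any of the four cases forcing essential regularity, and we cannot be in (E) (which implies $A+B$ is measurable, per the paper's discussion after the definition of extendibility), so (IV) must hold. Part (b) is identical, except that (E) is not ruled out. For part (c) we must exclude (E) and (IV). If (E) holds, pick an extension $(A',B')$ of $(A,B)$; then $(A',B')$ is a critical pair, and Proposition \ref{Ktopinequality} gives $m_*(A+B)=m(A'+H)+m(B'+H)-m(H)$ for a compact open $H\leqslant G$. This is a rational multiple of $m(H)=1/[G:H]\in\mathbb Q$, and by measurability of $A+B$ in (E) equals $m(A+B)$, contradicting irrationality of $m(A+B)$. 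If (IV) holds with, say, $m(A_0)=0$, then $m(A)=m(A_1)$ is an integer multiple of $m(K)\in\mathbb Q$, contradicting irrationality of $m(A)$.

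The main technical obstacle is upgrading the similarity $A+B\sim A+B+K$ in case (P) to the honest equality $A+B=A+B+K$, because essential regularity constrains the literal closure and interior of $A+B$, not their measure--theoretic shadows. The full--measure--in--each--coset argument above handles this cleanly, and the same open--null--set--is--empty pattern drives case (K).
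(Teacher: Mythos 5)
Your proposal is correct and follows essentially the same route as the paper: apply Corollary \ref{roman}, verify essential regularity (hence measurability of $A+B$) in the cases other than (E) and (IV) — verifications the paper dismisses as routine or delegates to Lemma \ref{interior} — and then rule out (E) and the null-piece (QP) case in part (c) by the same rationality argument via Proposition \ref{Ktopinequality} and $K$-periodicity. Your explicit upgrades (e.g.\ $A+B=A+B+K$ in case (P) and the open-null-set argument in case (K)) are sound fillings-in of those routine steps, not a different method.
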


\begin{proof}    (a)  In conclusions (I) and (II) of Corollary \ref{roman}, it is routine to check that $A+B$ is measurable.  If (III) holds in Corollary \ref{roman}, Lemma \ref{interior} implies $A+B$ is measurable.

\smallskip

\noindent  (b)  If (P) or (K) holds, then $(A,B)$ is essentially regular.  If (II) or (III) of Corollary \ref{roman} holds, then $(A,B)$ is essentially regular.  The only remaining alternatives are (E) of Theorem \ref{main} or (IV) of Corollary \ref{roman}.

\smallskip

\noindent (c)  If $m(A+B)$ is irrational, then neither conclusion (P) nor (E) can hold in Theorem \ref{main}.  If $m(A)$ and $m(B)$ are irrational, then (QP) cannot hold with one of $m(A_0)=0$ or $m(B_0)=0$.  Now $(A,B)$ is essentially regular, by Part (b) of the present corollary. \end{proof}

\subsection{Further description of extendible pairs}\label{Eremark}  If $(A,B)$ is a sur-critical pair satisfying conclusion (E) of Theorem \ref{main}, Proposition \ref{Ktopinequality} implies the group $H:=H(A+B)$ is compact and open and $A+B\sim A+B+H$.  Furthermore, exactly one of the following holds:
\begin{enumerate}
\item[(E.1)]  $A+B=A+B+H$.

\smallskip

\item[(E.2)]  $(A,B)$ is complementary with respect to $H$ and $A+B\neq A+B+H$, or $(A,B)$ satisfies conclusion (QP) of Theorem \ref{main}, with $K=H$, $m(A_0)+m(B_0)=m(H)$, and $A_0+B_0\neq A_0+B_0+H.$
\end{enumerate}
When $m(A_0)>0$ and $m(B_0)>0$ in (E.2), the pair $(A_0,B_0)$ is complementary with respect to the subgroup $H$; see \S \ref{complementarydef} for further description.

To obtain this classification, fix $A'\supseteq A$ and $B'\supseteq B$ such that $m(A')+m(B')>m(A)+m(B)$ and $m(A'+B')=m(A+B)$, and write $H$ for $H(A'+B')$.  By Proposition \ref{Ktopinequality},
\begin{align*}
m(A'+B')=m(A'+H)+m(B'+H)-m(H),
\end{align*}
so  $m(A)+m(B)=m(A+B)=m(A'+H)+m(B'+H)-m(H)$.  Rearranging, we get
\begin{align}\label{holes}
m(A'+H)-m(A)+m(B'+H)-m(B)=m(H).
\end{align}
Let $A=\bigcup_{i=1}^n A_i$ and $B=\bigcup_{j=1}^m B_j$ be $H$-coset decompositions of $A$ and $B$.  Since $A\subseteq A'+H$ and $B\subseteq B'+H$, (\ref{holes}) implies
\begin{align}\label{surholes}
m(A_i)+m(B_j)\geq m(H)
\end{align}
 for each $i$ and $j$ (otherwise the left-hand side of (\ref{holes}) would be larger than $m(H)$).  If the inequality (\ref{surholes}) is strict for each pair $i$ and $j$, we have (E.1).  Otherwise, take $i$ and $j$ so that equality holds in (\ref{holes})  and set $A_0=A_i$, $B_0=B_j$.  If $A=A_0$ and $B=B_0$, then $(A,B)$ is complementary with respect to $H$.  If not, we set $A_1=A\setminus A_0$ and $B_1=B\setminus B_0$, we verify (QP.1)-(QP.3) in Theorem \ref{main}.   Equation (\ref{holes}) implies $A_1\sim A_1+H$ and $B_1\sim B_1+H$, so (QP.1) holds. If $A+B\neq A+B+H$, we conclude that $A_0+B_0+H$ is a unique expression element of $A+B+H$ in $G/H$ and we have (QP.2).  (QP.3) now follows from $A+B\sim A+B+H$.

\section{Examples}\label{examples}  We list some examples of sur-critical pairs to show that each alternative in Theorem \ref{main} can occur, and that none of the alternatives can be omitted.  We do not attempt to exhaustively construct all possible sur-critical pairs.

\subsection{Periodic pairs}\label{periodicexamples}  If $F$ is a finite group, every sur-critical pair for $F$ satisfies (P) and not (K), and every nonextendible sur-critical pair satisfies (P) but not (E).  A specific example with $F=\mathbb Z/11\mathbb Z$ is $A=\{0,1\}$, $B=\{0,3\}$, so that $A+B=\{0,1,3,4\}$, and $(A,B)$ satisfies (P) but not (E) or (K).  Every periodic sur-critical pair $(A,B)$ for a compact group $G$ has the form $A=\phi^{-1}(C)\setminus N$, $B=\phi^{-1}(D)\setminus N$, where $(C,D)$ is a sur-critical pair for a finite group $F$, $\phi:G\to F$ is a continuous surjective homomorphism,  and $m(N)=0$, so the periodic sur-critical pairs for an arbitrary compact group $G$ are classified in \cite{Grynkiewicz}.

\subsection{Extendible pairs} The following example satisfies (E), but not (P), (K), or (QP).  Let $G=(\mathbb Z/ 17\mathbb Z)\times \mathbb T$ and let $A=\{1,3,5,7\}\times [0,0.8]$, $B=\{0,2\}\times [0,0.9]$, so that $A+B=\{1,3,5,7,9\}\times \mathbb T$.  Then $m(A)=3.2/17$, $m(B)=1.8/17$, and $m(A+B)=5/17$.  Also $A+B=A'+B'$, where $A'=\{1,3,5,7\}\times \mathbb T$ and $B'=\{0,2\} \times \mathbb T$.

\smallskip

To find examples satisfying (E.1)  (\S \ref{Eremark}),  fix a pair of sets  $A'$, $B'\subseteq G$ satisfying $m(A'+B')<m(A')+m(B')$, and let $H=H(A'+B')$.  Choose any pair of subsets $A\subseteq A'$, $B\subseteq B'$ with $m(A)+m(B)=m(A'+B')$, subject to the condition that $m((a+H)\cap A)+m((b+H)\cap B)>m(H)$ for all $a\in A'$ and $b\in B'$.  Then $A+B=A'+B'$, so $m(A+B)=m(A)+m(B)$.

\smallskip

To find examples satisfying (E.2) but not (E.1),  recall the construction in \S \ref{remark2}.  Let $(A,B)$ satisfy conclusion (QP) of Theorem \ref{main} with $m(A_0)+m(B_0)=m(K)$, but $A_0+B_0\neq A_0+B_0+K$.   To form a specific example of this construction with $G=(\mathbb Z/ 15\mathbb Z)\times \mathbb T$, let $A=(\{1,3,5\}\times \mathbb T)\cup (\{7\}\times S)$, where $S\subseteq \mathbb T$ is any measurable set, and let $B=(\{0,2\}\times \mathbb T)\cup (\{4\}\times (-S^c))$.  This example satisfies (E) and (QP) but not (P) or (K).

\subsection{Pairs arising from $\mathbb T$} Let $G$ be a compact group which is not totally disconnected. Let $(A,B)$ have the form $A=\chi^{-1}(I)$,  $B=\chi^{-1}(J)$, where $H\leqslant G$ is a compact open subgroup, $\chi:H\to \mathbb T$ is a continuous surjective homomorphism, $I,J\subseteq \mathbb T$ are intervals, and $m_{\mathbb T}(I)+m_{\mathbb T}(J)<1$.  One can verify that $(A,B)$ is a sur-critical pair satisfying (K) but not (P), (E), or (QP).

\subsection{A quasi-periodic pair} This example will satisfy (QP) but not (P), (E), or (K).

\smallskip

Let $G=\mathbb Z_7$, the $7$-adic integers with the usual topology, and consider $\mathbb Z$ as a subset of $\mathbb Z_7$ in the usual way.  Define the set $C\subseteq \mathbb Z$ by
\begin{align*}
C:=(\{0,1\}+7\mathbb Z)\cup\Bigl(2+7((\{0,1\}+7\mathbb Z)\cup (2+7(\cdots)))\Bigr),
\end{align*}
so that $C=(\{0,1\}+7\mathbb Z)\cup (2+7C)$.  Let $A$ be the closure of $C$ in $\mathbb Z_7$, and let $B=A$.  Then $
m(A)=m(B)=\sum_{n=1}^\infty \frac{2}{7^n}=1/3$. Note that $A$ has a quasi-periodic decomposition $A_1\cup A_0$ where $A_1=\overline{\{0,1\}+7\mathbb Z}$, $A_0=A\setminus A_1$.  The sumset $A+B$ is the closure of $C+C$ in $\mathbb Z_7$.  Note that
 \begin{align}\label{C+C}
C+C=(\{0,1,2,3\}+7\mathbb Z)\cup\Bigl(4+7((\{0,1,2,3\}+7\mathbb Z)\cup (4+7(\cdots)))\Bigr),
 \end{align}
so $m(A+B)=2/3=m(A)+m(B)$.  From (\ref{C+C}) one can check that $H(A+B)=\{0\}$, so (P) and (E) fail.  Since $G$ is totally disconnected, (K) cannot hold.

\subsection{More quasi-periodic pairs.} Let $G'$ be an infinite compact abelian group with Haar measure $m'$, and let $G=(\mathbb Z/4\mathbb Z)\times G'$.  Fix sets $A_0'$, $B_0'\subseteq G'$ satisfying $m_{*}'(A_0'+B_0')=m'(A_0')+m_0(B_0')$, let $A_1=\{0\}\times G'$, $B_1=\{0\}\times G'$, and let $A_0=\{1\}\times A_0'$, $B_0=\{1\}\times B_0'$.  Let $A=A_1\cup A_0$, $B=B_1\cup B_0$.  Then $m_*(A+B)=m(A)+m(B)$, but depending on our choice of $A_0$ and $B_0$, $A+B$ may not be measurable.  If $A_0'$ is a singleton, then $B_0'$ may be an arbitrary measurable subset of $G'$.

\section{Lemmas}\label{lemmas}

We fix a compact abelian group $G$ with Haar measure $m$.  Unless stated otherwise, sets $A$ and $B$ are assumed to be subsets of $G$.

\subsection{Consequences of Proposition \ref{Ktopinequality}}

We need Lemma \ref{overspill} and Corollary \ref{consequence} in many subsequent proofs.  See \S \ref{Hdef} for notation.

\begin{lemma}\label{overspill}
If $m_*(A+B)<m(A)+m(B)$, then for all $a\in A$ and all $b\in B$,
\begin{align}
\label{os1} m((a+H)\cap A)+m_*(A+B)&\geq m(A)+m(B),\\
\label{os2} m((b+H)\cap B) +m_*(A+B)&\geq m(A)+m(B),
\end{align}
where $H=H(A+B)$.
\end{lemma}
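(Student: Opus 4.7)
The plan is to invoke Proposition \ref{Ktopinequality} and reduce both inequalities to a trivial accounting of $H$-coset deficits. The hypothesis $m_*(A+B) < m(A) + m(B)$ places us in the setting of that proposition, so $H := H(A+B)$ is compact and open, $A+B = A+B+H$ (making $A+B$ a union of cosets of the open subgroup $H$, hence open and measurable, so that $m_*(A+B) = m(A+B)$), and
\begin{align*}
m(A+B) = m(A+H) + m(B+H) - m(H).
\end{align*}
Since $G$ is compact and $H$ is open of finite index, the $H$-coset decompositions of $A$ and $B$ are finite.

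Fix $a \in A$, enumerate the $H$-cosets meeting $A$ as $\{a_i + H\}_{i=1}^n$ with $a_1 = a$, and set $A_i := A \cap (a_i + H)$, so that $A_1 = (a+H) \cap A$. Enumerate the $H$-cosets meeting $B$ as $\{b_j + H\}_{j=1}^k$ with slices $B_j := B \cap (b_j + H)$. Define non-negative deficits $\delta_i := m(H) - m(A_i)$ and $\epsilon_j := m(H) - m(B_j)$. Each coset $a_i + H$ is contained in $A + H$, so $m(A+H) = n\,m(H)$ and $m(A+H) - m(A) = \sum_i \delta_i$; similarly $m(B+H) - m(B) = \sum_j \epsilon_j$. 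Combining with the identity from Proposition \ref{Ktopinequality} yields
\begin{align*}
m(A+B) - m(A) - m(B) + m(H) = \sum_i \delta_i + \sum_j \epsilon_j.
\end{align*}

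The key step is the trivial bound $\delta_1 \leq \sum_i \delta_i + \sum_j \epsilon_j$, valid because every summand is non-negative. Substituting $\delta_1 = m(H) - m(A_1)$ and rearranging gives $m((a+H) \cap A) + m_*(A+B) \geq m(A) + m(B)$, which is \eqref{os1}; the inequality \eqref{os2} follows by interchanging the roles of $A$ and $B$, since $H(A+B) = H(B+A)$. I anticipate no substantive obstacle: the whole argument is a direct bookkeeping identity built on Proposition \ref{Ktopinequality}, with the non-negativity of the deficits doing all the work.
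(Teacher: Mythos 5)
Your proof is correct and follows essentially the same route as the paper: both rest on the identity $m(A+B)=m(A+H)+m(B+H)-m(H)$ from Proposition \ref{Ktopinequality} together with the non-negativity of the $H$-coset deficits (the paper decomposes only $A$ and bounds $m(B+H)\geq m(B)$ directly, which is the same bookkeeping as your $\epsilon_j$'s). No gap; the symmetry argument for \eqref{os2} matches the paper as well.
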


\begin{proof}  We prove (\ref{os1}).  Write $m(A)=\sum_{i=1}^n m(A_i)$, where $A=\bigcup_{i=1}^n A_i$ is an $H$-coset decomposition of $A$.  Fix $a\in A$, and let $j$ be such that $(a+H)\cap A=A_j$. Then $m(A_j)=m(A)-\sum_{i\neq j} m(A_i)$, while $m(A+H)=\sum_{i=1}^n m(H)$.   Now Proposition \ref{Ktopinequality} implies
\begin{align*}
m(A_j)+m_*(A+&B)=\Bigl(m(A)-\sum_{i\neq j}m(A_i)\Bigr)+m(A+H)+m(B+H)-m(H)\\
&= \Bigl(m(A)-\sum_{i\neq j}m(A_i)\Bigr) + \Bigl(\sum_{i} m(H)\Bigr)+m(B+H)-m(H)\\
&= m(A)+\Bigl(\sum_{i\neq j} m(H)-m(A_i)\Bigr)+m(H)+m(B+H)-m(H)\\
&\geq m(A)+m(B+H)\\
&\geq m(A)+m(B).
\end{align*}
Equation (\ref{os2}) follows by symmetry. \end{proof}

We write $C\subset_m D$ below to mean $m(C\setminus D)=0$.
\begin{corollary}\label{consequence}  If $m_*(A+B)< m(A)+m(B)$, let $H:=H(A+B)$.  Then
\begin{align}\label{mover1}
\{z\in G:z+B\subset_m A+B\}=\{z\in G: z+B\subseteq A+B\}=A+H,
\end{align}
\begin{align}\label{mover2}
\{z\in G: A+z\subset_m A+B\}=\{z\in G: A+z \subseteq A+B\}=B+H.
\end{align}
\end{corollary}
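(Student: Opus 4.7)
My plan is to prove (\ref{mover1}) via the chain $A+H \subseteq \{z: z+B \subseteq A+B\} \subseteq \{z: z+B \subset_m A+B\} \subseteq A+H$; then (\ref{mover2}) will follow by symmetry, swapping the roles of $A$ and $B$. The middle inclusion is immediate, and the first inclusion is read off from Proposition \ref{Ktopinequality}: if $z = a+h$ with $a \in A$ and $h \in H$, then $z+B \subseteq A+B+H = A+B$.

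For the substantive inclusion $\{z: z+B \subset_m A+B\} \subseteq A+H$, I plan to argue by contradiction using Lemma \ref{overspill} applied to an enlarged pair. Suppose $z+B \subset_m A+B$ but $z \notin A+H$, so that $(z+H) \cap A = \varnothing$. Set $A' := A \cup \{z\}$. Because $A+B = A+B+H$ with $H$ open, the set $A+B$ is open, hence measurable, and $m_*(A+B) = m(A+B)$; the hypothesis $z+B \subset_m A+B$ then forces $(A'+B) \triangle (A+B)$ to have measure zero, giving $m_*(A'+B) = m_*(A+B)$ and $H(A'+B) = H$. Since $m(A') \geq m(A)$, the strict inequality $m_*(A'+B) < m(A')+m(B)$ persists, so Lemma \ref{overspill} applies to $(A', B)$ at $z \in A'$. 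Combined with $(z+H) \cap A' = \{z\}$ and the disjoint-union identity $m(A') = m(A) + m(\{z\})$, the resulting inequality collapses after cancellation to $m_*(A+B) \geq m(A)+m(B)$, contradicting the hypothesis.

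The main obstacle is the bookkeeping in the previous paragraph: one must verify that enlarging $A$ by the single point $z$ preserves both the essential stabilizer $H(A+B)$ and the inner measure $m_*(A+B)$. Once those identities are in hand, Lemma \ref{overspill} at the adjoined point $z$ contributes $m(\{z\})$ on both sides of the inequality, and these terms cancel to deliver the contradiction. This treatment is uniform whether or not $G$ is discrete, since the contribution of $\{z\}$ to $m(A')$ is exactly matched by its contribution to $m((z+H) \cap A')$.
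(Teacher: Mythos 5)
Your proof is correct, and it follows the same basic strategy as the paper's --- enlarge $A$ near the offending coset and apply the Kneser machinery to the enlarged pair to force a contradiction --- but the execution differs in two ways. The paper first upgrades $z+B\subset_m A+B$ to $z+B\subseteq A+B$ (via Lemma \ref{overspill} and the fact that $A+B$ is a union of $H$-cosets), and then, for the substantive equality, adjoins the entire coset $z+H$ to $A$ and applies equation (\ref{critical}) of Proposition \ref{Ktopinequality} to the pair $(A\cup(z+H),B)$, for which the exact identity $(A\cup(z+H))+B=A+B$ (hence the set-theoretic containment $z+B\subseteq A+B$, not just $\subset_m$) is needed. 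You instead adjoin only the single point $z$ and apply Lemma \ref{overspill} to $(A\cup\{z\},B)$; since only similarity of the sumsets is required there, you can work directly from the weaker hypothesis $z+B\subset_m A+B$, and your chain of inclusions disposes of both equalities in (\ref{mover1}) in one pass rather than two. The bookkeeping you flag is sound: $A+B=A+B+H$ with $H$ open makes $A+B$ open and measurable, adjoining the null set $(z+B)\setminus(A+B)$ preserves $m_*$ and the essential stabilizer, $(z+H)\cap(A\cup\{z\})=\{z\}$ because $z\notin A+H$, and the $m(\{z\})$ terms cancel, so the argument also covers finite $G$; (\ref{mover2}) then follows by symmetry exactly as in the paper.
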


\begin{proof}  For the first equality in (\ref{mover1}), note that $z+B\subset_m A+B$ implies $z+B\subseteq A+B$, by Lemma \ref{overspill} and the fact that $A+B$ is a union of cosets of $H$.  For the second equality in (\ref{mover1}), we argue by contradiction.  Fix $z\in G$ such that $z+B\subseteq A+B$, but $z\notin A+H$.   Let $A'=A\cup (z+H)$.     Then $A'+B=A+B$, so that $H(A'+B)=H(A+B)$, while $m(A'+H)=m(A+H)+m(H)$.  Applying Proposition \ref{Ktopinequality} to $(A',B)$ yields
\begin{align*}
m(A'+B)&=m(A'+H)+m(B+H)-m(H)\\
&= m(A+H)+m(B+H),
\end{align*}
which contradicts the assumption that $m_*(A+B)<m(A)+m(B)$.  Equation (\ref{mover2}) follows by symmetry.  \end{proof}

\begin{lemma}\label{Hholes}  If $m_*(A+B)<m(A)+m(B)$ and $H:=H(A+B)$, then for all $a\in A$ and $b\in B$,
\begin{align}\label{organize}
m((a+H)\cap A)+m((b+H)\cap B)>m(H).
\end{align}
\end{lemma}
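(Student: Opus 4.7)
The plan is to read the inequality directly off of the Kneser identity \eqref{critical} from Proposition \ref{Ktopinequality}, using an $H$-coset decomposition of $A$ and $B$. The strict inequality $m_*(A+B)<m(A)+m(B)$ plus that identity will force the total $H$-coset deficit of $A$ and $B$ combined to be strictly less than $m(H)$, which is more than enough.

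First I would apply Proposition \ref{Ktopinequality} to conclude that $H$ is compact and open, $A+B=A+B+H$, and
\begin{align*}
m_*(A+B)=m(A+H)+m(B+H)-m(H).
\end{align*}
Next, fix $H$-coset decompositions $A=\bigcup_{i=1}^n A_i$ and $B=\bigcup_{j=1}^{\ell} B_j$, where each $A_i$ (resp.\ $B_j$) is the intersection of $A$ (resp.\ $B$) with a single coset of $H$ that it meets. Set $\alpha_i:=m(H)-m(A_i)\ge 0$ and $\beta_j:=m(H)-m(B_j)\ge 0$; then $m(A+H)=nm(H)$, $m(A)=nm(H)-\sum_i\alpha_i$, and analogously for $B$.

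Substituting these into the Kneser identity and using the strict hypothesis $m(A)+m(B)>m_*(A+B)$ yields, after cancellation,
\begin{align*}
\sum_{i=1}^n\alpha_i+\sum_{j=1}^{\ell}\beta_j<m(H).
\end{align*}
Now for any $a\in A$ and $b\in B$ there are unique indices $k,r$ with $(a+H)\cap A=A_k$ and $(b+H)\cap B=B_r$, so $m((a+H)\cap A)+m((b+H)\cap B)=2m(H)-\alpha_k-\beta_r$. Since $\alpha_k+\beta_r$ is bounded above by $\sum\alpha_i+\sum\beta_j<m(H)$, we conclude $m((a+H)\cap A)+m((b+H)\cap B)>m(H)$, which is \eqref{organize}.

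There is essentially no obstacle; the only care needed is bookkeeping the coset decomposition and noting that a single pair $(\alpha_k,\beta_r)$ is dominated by the full sum. (Alternatively, one can derive the same conclusion by adding the sharper intermediate inequality $m((a+H)\cap A)\ge m(A)+m(B+H)-m_*(A+B)$ from the proof of Lemma \ref{overspill} to its symmetric counterpart, but the coset-decomposition route is more transparent.)
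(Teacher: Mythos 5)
Your proposal is correct and takes essentially the same approach as the paper: both arguments rest entirely on equation (\ref{critical}) of Proposition \ref{Ktopinequality}, the paper phrasing it as a two-line contrapositive (if some pair of coset slices had total measure at most $m(H)$, then $m(A+H)+m(B+H)\geq m(A)+m(B)+m(H)$, forcing $m_*(A+B)\geq m(A)+m(B)$), while you carry out the same computation directly via the coset-deficit bookkeeping $\sum_i\alpha_i+\sum_j\beta_j<m(H)$. There is no gap; the difference is purely organizational.
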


\begin{proof}  If inequality (\ref{organize}) fails, then $m(A+H)+m(B+H)\geq m(A)+m(B)+m(H)$, so inequality (\ref{critical}) in Proposition \ref{Ktopinequality} implies $m_*(A+B)\geq m(A)+m(B)$, contradicting the hypothesis.
\end{proof}

In the next corollary we use the elementary fact that if $G$ is compact, and $C$, $D\subseteq G$ have $m(C)+m(D)>1$, then $C+D=G$.  This fact follows from the observation that $t\in C+D$ if and only if $(t-C)\cap D\neq \varnothing$, while $t-C$ and $D$ cannot be disjoint if $m(C)+m(D)>m(G)$.  Consequently, if $H\leqslant G$ is a compact open subgroup and $C,$ $D\subset G$ are each contained in a coset of $H$ while  $m(C)+m(D)>m(H)$, then $C+D=C+D+H$.

\begin{corollary}\label{differencetosum}  If $m_*(A+B)<m(A)+m(B)$, then $A-B$ is periodic with period $H:=H(A+B)$.
\end{corollary}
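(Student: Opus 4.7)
The plan is to establish the stronger conclusion $A - B = (A-B) + H$, which immediately yields the periodicity $A - B \sim (A-B) + H$. The inclusion $A - B \subseteq (A-B)+H$ is automatic because $0 \in H$, so everything reduces to showing that for every $a \in A$ and $b \in B$ the full coset $a - b + H$ lies in $A-B$.

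To do this, I would fix such $a$ and $b$ and set $C := (a+H) \cap A$ and $D := -((b+H) \cap B)$. Then $C$ is contained in the coset $a+H$ of $H$, while $D$ is contained in the coset $-b + H$ (using $-H = H$), and Lemma \ref{Hholes} gives $m(C) + m(D) = m((a+H) \cap A) + m((b+H) \cap B) > m(H)$. The elementary observation recorded immediately before the corollary (if two subsets of cosets of a compact open $H$ have measures summing to more than $m(H)$, then their sumset is a union of $H$-cosets) therefore produces $C + D = C + D + H$. Since $C + D$ is a nonempty union of $H$-cosets contained in the single coset $a - b + H$, it must equal that coset. But $C + D \subseteq A + (-B) = A - B$, so $a - b + H \subseteq A - B$. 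Taking the union over all $a \in A$ and $b \in B$ finishes the argument.

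There is really no substantial obstacle: the content of the corollary is that Lemma \ref{Hholes} provides exactly the surplus mass needed to invoke the compact-group covering observation on each relevant pair of cosets, and the translation from sums to differences is free because $-H = H$. The only small point worth flagging is that one should note from the outset that the statement can be sharpened from a similarity to an actual equality, so that the periodicity conclusion is obtained without any appeal to regularity or inner measure.
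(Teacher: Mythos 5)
Your proposal is correct and follows essentially the same route as the paper: the paper also decomposes $A$ and $B$ into their $H$-coset pieces, applies Lemma \ref{Hholes} to see each pair of pieces has total measure exceeding $m(H)$, and invokes the elementary covering observation stated just before the corollary to conclude $A-B=A-B+H$. Your per-pair formulation with $C=(a+H)\cap A$, $D=-((b+H)\cap B)$ is just a pointwise restatement of that same argument, and the sharpening to exact equality (rather than similarity) is likewise already what the paper obtains.
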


\begin{proof} Let $A=\bigcup_{i=1}^n A_i$ and $B=\bigcup_{i=1}^m B_i$ be $H$-coset decompositions of $A$ and $B$, so that $A-B=\bigcup_{i,j} A_i-B_j$.  Inequality (\ref{organize}) implies $m(A_i)+m(B_j)>m(H)$ for each $i$ and $j$, so $A_i-B_j= A_i-B_j+H$ for each $i$ and $j$, implying $A-B=A-B+H$.
\end{proof}

\subsection{Reducible pairs}  We now dispense with the case of Theorem \ref{main} where $(A,B)$ is reducible (\S \ref{reducibledef}). For the next two lemmas and the following corollary, we assume that $m(A)>0$, $m(B)>0$, and $(A,B)$ satisfies $m_*(A+B)=m(A)+m(B)$.

\begin{lemma}\label{reduciblelemma1} If $A'\subseteq A$ is such that $m(A')=m(A)$ and $m_*(A'+B)<m_*(A+B)$, then $A'+H(A'+B)\subset_m A$ and at least one of the following holds\textup{:}

\begin{enumerate}
\item[(i)]  $(A,B)$ satisfies \textup{(QP)} of Theorem \textup{\ref{main}}, with  $K=H:=H(A'+B)$,  \\ $A_1=A\cap(A'+H)$, and $A_0=A\setminus A_1$, so that $m(A_1)=m(A)$ and $m(A_0)=0$.

\smallskip

\item[(ii)]  $B\sim B+H(A'+B)$.
\end{enumerate}
If $(A,B)$ is nonextendible, then {\textup{(i)}} holds.
\end{lemma}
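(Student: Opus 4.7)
I would apply Proposition~\ref{Ktopinequality} to the pair $(A',B)$, which is critical since $m_*(A'+B) < m(A)+m(B) = m(A')+m(B)$, obtaining a compact open subgroup $H := H(A'+B)$ with $A'+B = A'+B+H$ and the mass identity $m(A'+B) = m(A'+H) + m(B+H) - m(H)$. Defining $A_1 := A \cap (A'+H)$ and $A_0 := A \setminus A_1$, the inclusion $A' \subseteq A_1$ immediately gives $m(A_1) = m(A)$ and $m(A_0) = 0$; moreover $A_0 \neq \varnothing$, since otherwise $A \subseteq A'+H$ would yield $A+B \subseteq A'+B+H = A'+B$, contradicting $m_*(A+B) > m_*(A'+B)$. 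Writing $\epsilon_A := m(A'+H) - m(A')$ and $\epsilon_B := m(B+H) - m(B)$, the strict inequality $m_*(A'+B) < m(A')+m(B)$ becomes $\epsilon_A + \epsilon_B < m(H)$.

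The dichotomy I would pursue is whether $\epsilon_B = 0$. If $\epsilon_B = 0$, conclusion (ii) holds by definition. I would then verify $A'+H \subset_m A$ by showing $\epsilon_A = 0$: each $H$-coset $H_\beta$ meeting $B$ has $m(B \cap H_\beta) = m(H)$, so for every $H$-coset $H_\alpha$ meeting $A$, the translate $a + (B \cap H_\beta) \subseteq (A \cap H_\alpha) + (B \cap H_\beta)$ (for any $a \in A \cap H_\alpha$) gives inner measure $m(H)$ in $H_{\alpha+\beta}$; summing over disjoint cosets yields $m_*(A+B) = m(A+B+H)$, a multiple of $m(H)$. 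Combined with sur-criticality $m_*(A+B) = m(A)+m(B)$ and $m(B) = m(B+H)$ being a multiple of $m(H)$, this forces $m(A) = m(A')$ to be a multiple of $m(H)$, so $\epsilon_A$ is a nonnegative multiple of $m(H)$ strictly less than $m(H)$; hence $\epsilon_A = 0$ and $A' \sim A'+H$, from which $A'+H \subset_m A$ follows because $A' \subseteq A$.

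If $\epsilon_B > 0$, I would establish conclusion (i). Under the nonextendibility hypothesis this is the clean case: setting $\hat A := A \cup (A'+H)$, we have $\hat A + B = (A+B) \cup (A'+B+H) = A+B$, so $m_*(\hat A + B) = m_*(A+B)$, and if $\epsilon_A > 0$ then $m(\hat A) > m(A)$ would violate nonextendibility. Hence $\epsilon_A = 0$ and $A_1 \sim A_1 + H$, establishing the $A$-side of (QP.1). A coset/mass accounting in $G/H$---using that the excess $m_*(A+B) - m(A'+B) = m(H) - \epsilon_A - \epsilon_B$ lies in the "new" cosets of $(A+B+H) \setminus (A'+B)$, with each such new coset $H_{\alpha+\beta}$ (for $\alpha \in \mathcal A_0,\beta \in \mathcal B$) containing $(A \cap H_\alpha) + (B \cap H_\beta)$ of inner measure at least $m(B \cap H_\beta)$---forces $A_0$ to lie in a single coset $H_{\alpha_0}$ and singles out a unique $\beta_0$ so that setting $B_0 := B \cap H_{\beta_0}$ and $B_1 := B \setminus B_0$ realizes (QP.1)--(QP.3) by direct verification. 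The principal obstacle I anticipate is the coset/mass accounting that pins down $|\mathcal A_0| = 1$ and the unique $\beta_0$, and the separate case $\epsilon_B > 0$ with $(A,B)$ extendible (where (i) must still be concluded): the latter should reduce to the nonextendible setting by iteratively replacing $(A,B)$ with $(\hat A,B)$ until no further extension is possible, using techniques in the spirit of \cite{Grynkiewicz}.
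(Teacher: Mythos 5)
Your case split on $\epsilon_B$ matches the paper's (its $m(B_0)=m(H)$ versus $m(B_0)<m(H)$), and the divisibility argument giving $\epsilon_A=0$ when $\epsilon_B=0$ is correct; but there are two genuine gaps. First, when $\epsilon_B>0$ and $(A,B)$ is extendible, conclusion (ii) is unavailable (it is literally $\epsilon_B=0$), so you must still prove (i); and (i) forces $\epsilon_A=0$, since $A_1+H=A'+H$ and $m(A_1)=m(A')$, so $A_1\sim A_1+K$ is equivalent to $m(A'+H)=m(A')$, i.e.\ to the lemma's unconditional assertion $A'+H\subset_m A$. Your only argument for $\epsilon_A=0$ in the $\epsilon_B>0$ regime is the nonextendibility trick with $\hat A=A\cup(A'+H)$, and the proposed repair---iterating $(A,B)\mapsto(\hat A,B)$ ``until no further extension is possible''---does not close this: even granting that a nonextendible enlargement exists (extendibility concerns arbitrary measurable supersets, not just $\hat A$, so termination needs an argument), conclusion (i) for the enlarged pair says nothing about the prescribed decomposition $A_1=A\cap(A'+H)$, $A_0=A\setminus A_1$ of the \emph{original} $A$, which is exactly what the lemma asserts. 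The paper avoids extendibility here entirely: applying Corollary~\ref{consequence} to $(A',B)$ it fixes $a_0\in A$, $b_0\in B$ with $m((a_0+B_0)\setminus(A'+B))>0$ for $B_0=(b_0+H)\cap B$; Lemma~\ref{overspill} plus disjointness of $a_0+B_0$ from the $H$-periodic set $A'+B$ give the exact identity $m_*(A+B)=m(A'+B)+m(B_0)$, and feeding this into Kneser's equality yields $[m(A'+H)-m(A')]+[m(B_1+H)-m(B_1)]=0$, hence $\epsilon_A=0$ and fullness of every $H$-coset piece of $B$ other than $B_0$, with no extendibility hypothesis. That single computation is the crux your outline is missing.

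Second, the final assertion of the lemma---nonextendibility implies (i)---is not addressed in your $\epsilon_B=0$ branch, where you stop at (ii). One must show that $\epsilon_B=0$ is incompatible with nonextendibility under the hypothesis $m_*(A'+B)<m_*(A+B)$; the paper does this by deducing $A\sim A+H$ from $B\sim B+H$ and nonextendibility, whence $m(A''+B)=m_*(A+B)$ for every full-measure $A''\subseteq A$, contradicting the choice of $A'$. By contrast, the accounting you defer in the nonextendible $\epsilon_B>0$ case is fixable within your framework (there is exactly one new $H$-coset, and two distinct escaping $B$-cosets would each carry deficiency $\epsilon_B$, exceeding the total deficiency $\epsilon_B$), so that deferral is minor; the extendible case and the nonextendible $\epsilon_B=0$ case are the real gaps.
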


\begin{proof}  If $A$, $B$, $A'$ and $H$ are as in the hypothesis, Proposition \ref{Ktopinequality} implies $H$ is compact and open and $A'+B$ is a union of cosets of $H$. Corollary \ref{consequence} implies that whenever $a_0\in A$ is such that $a_0+B \not\subseteq A'+B$, there is a $b_0\in B$ such that
\begin{align}\label{Bover2}
m((a_0+B_0)\setminus (A'+B))>0,
\end{align}
where $B_0:=(b_0+H)\cap B$.   Fix $a_0\in A$, $b_0\in B$, and $B_0$ satisfying (\ref{Bover2}), and let $B_1=B\setminus B_0$.  Lemma \ref{overspill} and the hypothesis $m(A')=m(A)$ imply
\begin{align}\label{Bover1}
m(B_0)+m(A'+B)\geq m(A')+m(B) =m_*(A+B).
\end{align}

\begin{claim1}  $A'+H \subset_m A$ and $B_1+H\subset_m B$.
\end{claim1}

\noindent\textit{Proof of Claim \textup{1}.}  By the definition of $B_0$ and the fact that $A'+B=A'+B+H$,  $A'+B$ is disjoint from $a_0+B_0$.    Using (\ref{Bover1}) we write
\begin{align}\label{innermeasure}
m_*(A+B)=m(A'+B)+m(B_0).
\end{align}
Then
\begin{align}\label{arranging}\begin{split}m(A')+m(B)&=m_*(A+B)\\
&=m(A'+B)+m(B_0)\\
&=m(A'+H)+m(B+H)-m(H)+m(B_0)\\
&=m(A'+H)+m(B_1+H) +m(B_0),\end{split}
\end{align}
where the second line is (\ref{innermeasure}), the third line is from Proposition \ref{Ktopinequality}, and the fourth line is due to $B=B_1\cup B_0$ being a disjoint union and $B_0$ being contained in a coset of $H$. Subtracting $m(A')+m(B)$ from the first and last lines of (\ref{arranging}), we find
\begin{align}\label{arranged}
[m(A'+H)-m(A')] + [m(B_1+H)+m(B_0)-m(B)]=0
\end{align}
Each bracketed summand in (\ref{arranged}) is nonnegative, so
\begin{align}\label{A'}
m(A')=m(A'+H)
\end{align}
and
\begin{align}\label{B'}
m(B)=m(B_1+H)+m(B_0).
\end{align}
Since $(B_1+H) \cap B_0=\varnothing$ and $B=B_1\cup B_0$, (\ref{A'}) and (\ref{B'}) prove the claim.  \hfill $\square$

\smallskip

We proceed based on whether $m(B_0)<m(H)$.

\subsubsection*{Case \textup{1}. $m(B_0)=m(H)$} In this case Claim 1 implies $B\sim B+H$, and we conclude (ii) in the statement of the lemma.

\subsubsection*{Case \textup{2}.  $m(B_0)<m(H)$}  Let $A_0:=A\setminus (A'+H)$.  We will show conclusion (i) of the lemma holds.

\begin{claim2}  $A_0$ is contained in a coset of $H$ and $A_0+B_0+H$ is a unique expression element of $A+B+H$ in $G/H$.
\end{claim2}

\noindent\textit{Proof of Claim \textup{2}.}  Observe that $A+B_1\subseteq A'+B$, since otherwise Claim 1 implies $m_*(A+B)\geq m(A'+B)+m(H) >m(A'+B)+m(B_0)$, contradicting (\ref{innermeasure}).  Furthermore, $A'+H$ is the set $\{z\in G: z+B\subset_m A'+B\}$, by Corollary \ref{consequence}.  Thus, $A_0$ is the set of $a\in A$ such that $a+B_0\not\subseteq A'+B$.  Now if $a$, $a'\in A_0$,  then $a+B_0\sim a'+B_0$, by (\ref{innermeasure}).  Since $B_0$ is contained in a coset of $H$ and $m(B_0)>0$, the similarity $a+B_0\sim a'+B_0$ implies $a-a'\in H$.   We conclude that $A_0$ is contained in a coset of $H$.  Now the containment $A+B_1\subseteq A'+B$ implies $A_0+B_0+H$ is a unique expression element of $A+B+H$ in $G/H$, as claimed.  \hfill $\square$

\smallskip

Writing $A_1=A\cap(A'+H)$, and maintaining $A_0$, $B_1$, and $B_0$ as defined above,  the partition $A=A_1\cup A_0$ is a quasi-periodic decomposition with quasi-period $H$, while $B_1\sim B_1+H$, and $B_0$ is contained in a coset of $H$ (possibly $B_1=\varnothing$), so we have verified that (QP.1) and (QP.2) hold.  To verify (QP.3), consider the partition $A+B= (A_1+B)\cup (A_0+B_0)$,  so that  (\ref{innermeasure}) implies $m_*(A_0+B_0)=m(B_0)$.  Since $m(A_0)=0$, we then have $m_*(A_0+B_0)=m(A_0)+m(B_0)$. This concludes the analysis of Case 2.

\smallskip

 Now suppose $(A,B)$ is nonextendible.  If Case 2 holds, we have conclusion (i) as desired.   If Case 1 holds we derive a contradiction:  from $B+H\sim B$ and the nonextendibility of $(A,B)$ we find $A\sim A+H$.  Since $H$ is compact and open,  the similarity $A\sim A+H$ implies  $m(A''+B)=m_*(A+B)$ whenever $A''\subseteq A$ has $m(A'')=m(A)$, contradicting the hypothesis of the lemma.    \end{proof}

\begin{lemma}\label{reduciblelemma2}
 If $(A,B)$ is reducible and nonextendible, then $(A,B)$ has a quasi-periodic decomposition $A=A_1\cup A_0$, $B=B_1\cup B_0$ satisfying conclusion \textup{(QP)} of Theorem \textup{\ref{main}}, and at least one of $m(A_0)=0$ or $m(B_0)=0$.
\end{lemma}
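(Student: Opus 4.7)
The plan is to reduce the proof to a direct invocation of Lemma \ref{reduciblelemma1} and its $A$-$B$-symmetric analogue applied to $(A,B)$, exploiting the ``if $(A,B)$ is nonextendible then \textup{(i)} holds'' clause of that lemma to harvest the desired quasi-periodic decomposition with $m(A_0)=0$ or $m(B_0)=0$.

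Fix reducibility witnesses $A'\subseteq A$ and $B'\subseteq B$ with $m(A')=m(A)$, $m(B')=m(B)$, and $m_*(A'+B')<m_*(A+B)$, noting the chains $m_*(A'+B')\leq m_*(A'+B)\leq m_*(A+B)$ and $m_*(A'+B')\leq m_*(A+B')\leq m_*(A+B)$.  If $m_*(A'+B)<m_*(A+B)$, Lemma \ref{reduciblelemma1} applied to $(A,B)$ with witness $A'$ together with the nonextendibility clause immediately yields \textup{(QP)} for $(A,B)$ with $K=H(A'+B)$ and $m(A_0)=0$; if instead $m_*(A+B')<m_*(A+B)$, the symmetric version gives \textup{(QP)} with $m(B_0)=0$.

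The remaining case is $m_*(A'+B)=m_*(A+B)=m_*(A+B')$.  Then $(A',B)$ is sur-critical and $m_*(A'+B')<m_*(A'+B)$, so the symmetric Lemma \ref{reduciblelemma1} applied to $(A',B)$ gives either a quasi-periodic decomposition of $(A',B)$ with $m(B_0)=0$, or $A'\sim A'+H$ where $H:=H(A'+B')$; a parallel application to $(A,B')$ gives the companion dichotomy.  The combined similarity alternatives would force $(A,B)$ periodic with period $H$, contradicting reducibility: for a periodic pair with $A\sim A+H$ and $B\sim B+H$, the constraints $m(A')=m(A)$ and $m(B')=m(B)$ force $A'$ and $B'$ to fill every $H$-coset hit by $A$ or $B$ to full measure, whence $A'+B'\sim A+B$ and $m_*(A'+B')=m_*(A+B)$.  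Therefore at least one of the quasi-periodic alternatives holds, and I transfer that decomposition to \textup{(QP)} for $(A,B)$.

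The main obstacle is this transfer step.  I must show that the null set $A\setminus A'$ lies set-theoretically inside $A'+H$, so that the partition $A_1=A\setminus(c+H)$, $A_0=A\cap(c+H)$ (where $A'_0\subseteq c+H$) is the required quasi-periodic decomposition, with the analogous construction for $B$.  The clause $A'+H\subset_m A$ from Lemma \ref{reduciblelemma1} combined with $A\subset_m A'$ already gives $A\sim A'+H$; nonextendibility of $(A,B)$ then forces $A\setminus A'\subseteq A'+H$, because any $x\in A\setminus A'$ with $x+H$ disjoint from $A'+H$ would permit the enlargement $\tilde A=A\cup(x+H)$, and the equality $m_*(A+B)=m_*(A'+B)$ together with the (QP) structure of $(A',B)$ in $G/H$ forces the image $\bar x+\bar B_1$ in $G/H$ to lie within $\bar{A'+B}$, bounding the new sumset contribution in $\tilde A+B$ by a null set and yielding the required contradiction with nonextendibility.
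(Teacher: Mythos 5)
Your Case 1 is exactly the paper's Case 1, but the remaining case is where your argument breaks down, and it is precisely where the paper does its real work. The two applications of Lemma \ref{reduciblelemma1} give (besides the unconditional conclusions $A'+H\subset_m A$, $B'+H\subset_m B$) the alternatives $A'\sim A'+H$ and $B'\sim B'+H$, and you dismiss the case where both hold by claiming it forces $A\sim A+H$ and $B\sim B+H$, contradicting reducibility. That inference is false: $A\sim A'$ and $A'\sim A'+H$ only give $A\sim A'+H$, not $A\sim A+H$, because the null set $A\setminus A'$ may occupy $H$-cosets disjoint from $A'+H$, in which case $m(A+H)>m(A)$. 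Far from being contradictory, the situation $A\sim A'+H$, $B\sim B'+H$ is exactly the paper's main case (its display (\ref{simsim})), and the whole point of the lemma there is to show that the stray null sets $A_0=A\setminus(A'+H)$ and $B_0=B\setminus(B'+H)$ each lie in a \emph{single} coset of $H$. The paper proves this by applying Proposition \ref{Ktopinequality} to the auxiliary pair $(A+H,B')$, using nonextendibility of $(A,B)$ to show $K:=H(A+H+B')\leqslant H$, and comparing the two resulting measure identities to get $t\cdot m(H)=m(K)$, hence $t=1$; your proposal has no substitute for this argument, and Lemma \ref{reduciblelemma1} alone does not supply it (its dichotomy may well return only the similarity alternative for both auxiliary pairs, since $(A',B)$ and $(A,B')$ are not known to be nonextendible).

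Your transfer step is also aimed at a false statement. Take $G=(\mathbb Z/4\mathbb Z)\times\mathbb T$, $A=(\{0\}\times\mathbb T)\cup\{(1,p)\}$, $B=(\{0\}\times\mathbb T)\cup\{(1,q)\}$, and $A'=B'=\{0\}\times\mathbb T$. This pair is sur-critical, nonextendible, and reducible with these witnesses, one is in your remaining case, and $H=H(A'+B')=\{0\}\times\mathbb T$; yet $A\setminus A'=\{(1,p)\}$ does not lie in $A'+H$. In the true (QP) decomposition the stray null set \emph{is} $A_0$; it is not to be absorbed into $A'+H$, so ``show $A\setminus A'\subseteq A'+H$'' is the wrong target. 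Your nonextendibility argument for it also fails on its own terms: enlarging to $\tilde A=A\cup(x+H)$ adds to the sumset not only $x+H+B_1$ but also the full coset $x+H+B_0$ (of measure $m(H)>0$), which need not lie in $A+B$ --- in the example it is $\{2\}\times\mathbb T$, while $A+B$ meets that fiber only in the point $(2,p+q)$ --- so no contradiction with nonextendibility arises. In short, both gaps stem from treating null sets as harmless under the operation $S\mapsto S+H$; handling exactly this phenomenon is the content of the paper's Case 2.
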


\begin{proof}  Let $A'\subseteq A$ and $B'\subseteq B$ be such that $m(A')+m(B')=m(A)+m(B)$ and $m_*(A'+B')<m_*(A+B)$.   Write $H:=H(A'+B')$, and note that $H$ is compact and open, by Proposition \ref{Ktopinequality}.  We consider two cases.

\subsubsection*{Case \textup{1}. $m_*(A'+B)<m_*(A+B)$ or $m_*(A+B')<m_*(A+B)$}  We apply Lemma \ref{reduciblelemma1}  and we are done.

\subsubsection*{Case \textup{2}. $m_*(A'+B)=m_*(A+B')=m_*(A+B)$}  In this case Lemma \ref{reduciblelemma1} applies to the pairs $(B,A')$ and $(A,B')$, so
\begin{align}\label{simsim}
B'+H\sim B \text{ and } A'+H \sim A.
\end{align}
We claim that $A_0:=A\setminus (A'+H)$ is contained in a coset of $H$.  Let $t$ be the number of cosets of $H$ occupied by $A\setminus (A'+H)$, so that \begin{align}\label{tee}
m(A+H)-m(A')=t\cdot m(H).
 \end{align}
 We aim to show that $t=1$, so that $A_0$ is contained in a coset of $H$.  We have $t>0$, as the hypothesis of Case 2 implies $A+H\neq A'+H$.

\smallskip

Consider the pair $(A+H,B')$.  By (\ref{simsim}), $A+H+B'\sim A+B'\sim A+B$.  Then
\begin{align}\label{computation}
\begin{aligned}m((A+H)+B') &=m_*(A+B)\\
&=m(A)+m(B)  && \text{(by hypothesis)}\\
&=m(A'+H)+m(B')  && \text{(by (\ref{simsim}))}\\
&=m(A+H)-t\cdot m(H)+m(B')  && \text{(by (\ref{tee}))}\\
&<m(A+H)+m(B'),
\end{aligned}
\end{align}
Applying Proposition \ref{Ktopinequality} to the pair $(A+H,B')$ we get that $K:=H((A+H)+B')$ is compact and open,
\begin{align}\label{+K}
A+H+B'+K=  A+H+B'
\end{align} and
\begin{align}\label{-m(K)}
m(A+H+B')=m(A+H+K)+m(B'+K)-m(K).
\end{align}
We want to show $K\leqslant H$.  The nonextendibility of $(A,B)$, (\ref{simsim}),  and (\ref{+K}) together imply $B'+K\subset_m B$.   It follows that $B'+K\subset_m E$ whenever $E\sim B$.  Consequently $B'+K\subset_m B'$, and we conclude  $A'+B'\sim A'+B'+K$, so $K\leqslant H$ by the definition of $H$.  Now (\ref{-m(K)}) becomes
\begin{align}\label{KandH}
m(A+H+B')=m(A+H)+m(B')-m(K).
\end{align}  Comparing (\ref{KandH})  with the fourth line of (\ref{computation}), we get $t\cdot m(H)=m(K)$.  This implies $t=1$, since $K\subseteq H$ and $m(K)>0$.

\smallskip

We have shown that $A_0$ is contained in a coset of $H$, so $A_1:=A\setminus A_0$ gives the desired quasi-periodic decomposition of $A$.  Reversing the roles of $A$ and $B$, we find the corresponding quasi-periodic decomposition of $B$: $B_0=B\setminus (B'+H)$, $B_1=B\setminus B_0$.  We now show that $A_1\cup A_0$ and $B_1\cup B_0$ satisfy (QP.2) and (QP.3).

\smallskip

Observe that $m(A_0)=m(B_0)=0$, by our choice of $A_0$ and $B_0$.  Nonextendibility of $(A,B)$ implies $A_0+B_0+H$ is a unique expression element of $A+B+H$ in $G/H$.  To see that $m_*(A_0+B_0)=0$, write $A+B$ as $(A_1+B)\cup (A_0+B_0)$, so that $m_*(A+B)=m(A_1+B)+m_*(A_0+B_0)$.  By the hypothesis of Case 2, we have $m_*(A_1+B)=m_*(A+B)$, so $m_*(A_0+B_0)=0=m(A_0)+m(B_0)$.  \end{proof}

\begin{corollary}\label{reduciblecor}  If $(A,B)$ is reducible and $m(H(A+B))=0$, then $(A,B)$ satisfies conclusion \textup{(QP)} of Theorem \textup{\ref{main}} with  at least one of $m(A_0)=0$ or $m(B_0)=0$.
\end{corollary}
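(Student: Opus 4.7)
The plan is to observe that the hypothesis $m(H(A+B))=0$ forces $(A,B)$ to be nonextendible, after which Lemma~\ref{reduciblelemma2} applies directly. Thus the only real content is verifying the implication ``$(A,B)$ extendible $\Longrightarrow m(H(A+B))>0$'' under the standing sur-criticality assumption $m_*(A+B)=m(A)+m(B)$ with $m(A),m(B)>0$.

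To verify this implication, suppose $(A,B)$ is extendible: there exist measurable $A'\supseteq A$, $B'\supseteq B$ with $m(A')+m(B')>m(A)+m(B)$ and $m_*(A'+B')=m_*(A+B)=m(A)+m(B)$. Then $m_*(A'+B')<m(A')+m(B')$, so Proposition~\ref{Ktopinequality} applies to $(A',B')$ and yields that $H':=H(A'+B')$ is compact and open and $A'+B'=A'+B'+H'$; in particular $A'+B'$ is measurable. Since $A+B\subseteq A'+B'$ and $m_*(A+B)=m(A'+B')$, inner regularity of $m$ forces $m((A'+B')\setminus(A+B))=0$, i.e.\ $A+B\sim A'+B'$. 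Then for any $t\in H'$ the chain
\begin{align*}
(A+B)+t\sim (A'+B')+t=A'+B'\sim A+B
\end{align*}
exhibits $t\in H(A+B)$, so $H'\subseteq H(A+B)$. Since $H'$ is open, $m(H(A+B))\geq m(H')>0$.

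By contraposition, the hypothesis $m(H(A+B))=0$ rules out extendibility, so $(A,B)$ is nonextendible. Combined with the assumed reducibility, Lemma~\ref{reduciblelemma2} produces a quasi-periodic decomposition $A=A_1\cup A_0$, $B=B_1\cup B_0$ satisfying conclusion~(QP) of Theorem~\ref{main} in which at least one of $m(A_0)=0$ or $m(B_0)=0$ holds, which is exactly what the corollary asserts. I expect no real obstacle here: the proof is essentially a bookkeeping observation linking the existence of an open essential stabilizer to extendibility, followed by a direct appeal to the already-proved lemma.
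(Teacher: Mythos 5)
Your proposal is correct and follows the paper's own argument: the paper likewise deduces nonextendibility from $m(H(A+B))=0$ via Proposition \ref{Ktopinequality} and then invokes Lemma \ref{reduciblelemma2}. Your write-up merely spells out the details (already noted in \S \ref{setsandpairs}) of why extendibility would force $H(A+B)$ to be open.
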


\begin{proof}  If  $m(H(A+B))=0$, then $(A,B)$ is nonextendible, by Proposition \ref{Ktopinequality}.  The conclusion now follows from Lemma \ref{reduciblelemma2}. \end{proof}

\subsection{Auxiliary lemmas}  The next two lemmas handle technicalities arising repeatedly in the next subsection.  We continue to assume $(A,B)$ satisfies the hypotheses of Theorem \ref{main}.

\begin{lemma}\label{easy}  Suppose $(A,B)$ is irreducible and nonextendible.  Assume further that $H:=H(\tilde{A}+\tilde{B})$ has positive Haar measure for some $\tilde{A}\subseteq A$, $\tilde{B}\subseteq B$  such that $\tilde{A}+\tilde{B}$ is measurable, $m(\tilde{A})=m(A)$ and $m(\tilde{B})=m(B)$. Then $A$ and $B$ are periodic with period $H$, or $(A,B)$ satisfies \textup{(QP)} of Theorem \textup{\ref{main}} with $K=H$.
\end{lemma}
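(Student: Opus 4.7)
The plan is to consider the compact open subgroup $H$ arising from the hypothesis and to dichotomize on $m(A+B+H)$. By the observation in \S \ref{periodicdef}, since $m(H) > 0$, there is $C' \subseteq \tilde{A}+\tilde{B}$ with $\tilde{A}+\tilde{B} \sim C' \sim C'+H$. Irreducibility of $(A,B)$ gives $m(\tilde A + \tilde B) = m_*(A+B) = m(A) + m(B)$, so $A+B$ essentially contains an $H$-periodic set of this measure. Writing $\pi: G \to G/H$ and decomposing $A = \bigsqcup_i A_i$, $B = \bigsqcup_j B_j$ along $H$-cosets, the relations $\tilde A \subseteq A$ and $m(\tilde A) = m(A)$ force $m(\tilde A_i) = m(A_i)$ for each $i$; replacing $\tilde A, \tilde B$ with their intersections with $\pi^{-1}(I^*), \pi^{-1}(J^*)$ where $I^* := \{i : m(A_i) > 0\}$ and $J^* := \{j : m(B_j) > 0\}$ preserves all hypotheses and arranges $\tilde A + \tilde B \subseteq \pi^{-1}(I^*+J^*)$.

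If $m(A+B+H) = m_*(A+B)$, then $m_*((A+H)+B) = m(A+B+H) = m_*(A+B)$, so nonextendibility applied to $(A+H, B) \supseteq (A, B)$ yields $A+H \sim A$. Symmetrically $B+H \sim B$; this is conclusion (P).

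Otherwise $m(A+B+H) > m(A)+m(B)$, and the aim is conclusion (QP) with $K = H$. In this case $(\tilde A + H, \tilde B + H)$ may be shown to form a critical pair with sumset of measure $m(A)+m(B)$ and essential stabilizer $H$; applying Proposition \ref{Ktopinequality} yields
\[m(\tilde A + H) + m(\tilde B + H) = m(A) + m(B) + m(H),\]
which, using the coset-level identities $m(A_i) = m(\tilde A_i)$ and $m(B_j) = m(\tilde B_j)$, rewrites as $\sum_{i \in I^*}(m(H) - m(A_i)) + \sum_{j \in J^*}(m(H) - m(B_j)) = m(H)$, so the total coset deficits on both sides sum to exactly $m(H)$.

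The main obstacle, I expect, is to deduce from this deficit equation together with nonextendibility that the deficits concentrate in at most one coset of each of $A$ and $B$, so that setting $A_1$ to be the union of full cosets of $A$ and $A_0$ the remaining partial coset (and analogously for $B$) gives a valid QP decomposition. I would argue by contradiction: if $A$ had two distinct cosets $i_1, i_2$ with positive deficit, one would fill $A$ on coset $i_1$ to obtain $A' \supsetneq A$ and argue $m_*(A'+B) = m_*(A+B)$, using that the added content $\pi^{-1}(i_1 + J) \setminus (A+B)$ consists of measure-zero pieces: cosets $\pi^{-1}(i_1 + j)$ with $j \in J^*$ and $m(A_{i_1}) + m(B_j) > m(H)$ are already essentially contained in $A+B$ (by the elementary coset-filling fact recalled before Corollary \ref{differencetosum}), while contributions from $j \in J \setminus J^*$ add at most null sets. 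This yields a contradiction with nonextendibility; the symmetric analysis for $B$, followed by verification of (QP.1)--(QP.3) (with the unique-expression property (QP.2) arising from the $H$-periodic structure of $\tilde A + \tilde B \sim \pi^{-1}(I^*+J^*)$), then completes the proof.
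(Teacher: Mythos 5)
Your dichotomy on $m(A+B+H)$ versus $m_*(A+B)$ is reasonable, and your Case 1 (periodicity via nonextendibility applied to $(A+H,B)$ and $(A,B+H)$) is correct. But Case 2 rests on a false intermediate claim and, more fundamentally, aims at the wrong target. Under the lemma's hypotheses, irreducibility gives $m_*((A+B)\cap H_i)\in\{0,m(H)\}$ for every coset $H_i$ of $H$, and nonextendibility then forces $m(A\cap H_i)\in\{0,m(H)\}$ and $m(B\cap H_i)\in\{0,m(H)\}$: if some coset carried $A$-measure strictly between $0$ and $m(H)$, filling that coset would enlarge $A$ without increasing $m_*(A+B)$, since every coset of $A+B+H$ the new sums land in already meets $A+B$ in full inner measure. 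Hence every positive-measure coset of $A$ and of $B$ is essentially full, all of your deficits $m(H)-m(A_i)$, $m(H)-m(B_j)$ vanish, and your deficit identity would read $0=m(H)$. The error is the assertion that $(\tilde A+H,\tilde B+H)$ is a critical pair with sumset of measure $m(A)+m(B)$ and essential stabilizer exactly $H$: typically $m\bigl((\tilde A+H)+(\tilde B+H)\bigr)$ equals $m(\tilde A+H)+m(\tilde B+H)$, so Proposition \ref{Ktopinequality} does not apply, and even when the pair is critical its stabilizer may be strictly larger than $H$, so the identity you invoke is not what the proposition yields. (Your preliminary replacement of $\tilde A,\tilde B$ by intersections with $\pi^{-1}(I^*),\pi^{-1}(J^*)$ also does not obviously preserve measurability of the sumset or the stabilizer hypothesis, but that is minor by comparison.)

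The deeper point is that in this lemma the (QP) alternative always occurs with $m(A_0)=m(B_0)=0$: the exceptional pieces are nonempty \emph{null} sets, so no measure computation (deficit equations, filling positive-measure deficits) can detect them, let alone show that $A_0$ lies in a single coset of $H$, that $A_0+B_0+H$ is a unique expression element of $A+B+H$ in $G/H$, or that $m_*(A_0+B_0)=0$. The paper's proof supplies exactly the missing mechanism: by irreducibility one finds $A'\subseteq A$, $B'\subseteq B$ with $A\sim A'\sim A'+H$, $B\sim B'\sim B'+H$ and $m(A'+B')=m_*(A+B)$; then for each $a\in A_0:=A\setminus(A'+H)$ one sets $C=A'\cup(a+H)$, uses irreducibility to get $C+B'\sim\tilde A+\tilde B$, and applies Corollary \ref{consequence} to the critical pair $(C,B')$ to see that $C+H=\{z\in G: z+B'\subset_m \tilde A+\tilde B\}$ is independent of the chosen $a$; this pins the null dust $A_0$ into one coset, and nonextendibility then delivers (QP.2) and (QP.3). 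Your proposal has no counterpart to this set-theoretic step, so the (QP) half of the lemma is not established.
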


\begin{proof}  Let $\delta=m(H)$.  Since $m(H)>0$, we have $m((\tilde{A}+\tilde{B})\cap H_i)\in \{0,\delta\}$ for every coset $H_i$ of $H$ (cf.~\S \ref{periodicdef}).  Irreducibility of $(A,B)$ then implies
\begin{align}\label{0delta}
m_*((A+B)\cap H_i)\in\{0,\delta\} \text{ for every coset } H_i \text{ of } H.
\end{align} Combining (\ref{0delta}) with the nonextendibility of $(A,B)$, we have $m(A\cap H_i)\in \{0,\delta\}$ and  $m(B\cap H_i)\in \{0,\delta\}$ for every coset $H_i$ of $H$.  Now by the irreducibility of $(A,B)$,  there are sets $A'\subseteq A$, $B'\subseteq B$ such that
\begin{align}\label{AsimsBsims} A\sim A'+H\sim A' \text{ and } B\sim B'+H\sim B',
 \end{align}
 $A'+B'$ is measurable, and $m_*(A+B)=m(A'+B')$.  Setting  $A_0:=A\setminus (A'+H)$, we claim $A_0$ is contained in a coset of $H$.  To prove this claim, assume $A_0$ is nonempty, let $a\in A_0$, and consider the pair $(A'\cup (a+H), B')$.  Write $C$ for $A'\cup (a+H)$, so $m(C)=m(A')+m(H)$.  By (\ref{AsimsBsims}) we have $a+B'+H\subset_m A+B$, so $C+B'\subset_m A+B$.   The irreducibility of $(A,B)$ then implies
\begin{align}\label{C+B'}
C+B' \sim \tilde{A}+\tilde{B},
\end{align} so that $H(C+B')=H(\tilde{A}+\tilde{B})$.   We also have
$$
m(C+B')=m_*(A+B)=m(A)+m(B)<m(C)+m(B'),
$$
so we may apply Corollary \ref{consequence} to the pair $(C,B')$ and find
\begin{align}\label{CplusH}
C+H=\{z\in G: z+B'\subset_m C+B'\}.
\end{align}
By (\ref{C+B'}), the right-hand side of (\ref{CplusH}) is also $\{z\in G: z+B'\subset_m \tilde{A}+\tilde{B}\}$, which does not depend on $a\in A_0$.  Hence, $(A'+H)\cup (a+H)$ does not depend on the choice of $a\in A_0$.  This means that $A_0$ is contained in a coset of $H$, as claimed.

\smallskip

If $A_0=\varnothing$, then $B\sim B+H$ by the irreducibility and nonextendibility of $(A,B)$,  so $A$ and $B$ are periodic with period $H$, and we are done.   If $A_0$ is nonempty, we will show that $(A,B)$ satisfies (QP).  Set $A_1=A\setminus A_0$ to get the desired quasi-periodic decomposition $A_1\cup A_0$ of $A$ with $m(A_0)=0$.  Reversing the roles of $A$ and $B$ in the previous paragraph, we find that either $B$ is $H$-periodic or has a quasi-periodic decomposition $B_1\cup B_0$ with respect to $H$, where $m(B_0)=0$.  The nonextendibility of $(A,B)$ implies $B_0$ is nonempty, and also implies $A_0+B_0+H$ is a unique expression element of $A+B+H$ in $G/H$.    Now (\ref{0delta}) and the nonextendibility of $(A,B)$ imply $m_*(A_0+B_0)=0=m(A_0)+m(B_0)$.  \end{proof}

\begin{lemma}\label{QPtoQP}  Let $(A',B')$ be an irreducible nonextendible sur-critical pair satisfying \textup{(QP)} of Theorem \textup{\ref{main}} with decomposition $A'=A_1'\cup A_0'$, $B'=B_1'\cup B_0'$, such that $m(A_0')>0$ and $m(B_0')>0$.   If $(A,B)$ is another irreducible nonextendible sur-critical pair such that $A\sim A'$ and $B\sim B'$, then $(A,B)$ also satisfies \textup{(QP)}.
\end{lemma}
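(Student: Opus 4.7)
The plan is to transfer the quasi-periodic decomposition of $(A',B')$ directly to $(A,B)$. Let $K$ be the compact open subgroup from (QP) for $(A',B')$, and let $\alpha,\beta\in G$ satisfy $A_0'\subseteq \alpha+K$ and $B_0'\subseteq \beta+K$. Define $A_0:=A\cap(\alpha+K)$, $A_1:=A\setminus A_0$, and analogously $B_0:=B\cap(\beta+K)$, $B_1:=B\setminus B_0$. Since $A\sim A'$ and $B\sim B'$, the relations $A_0\sim A_0'$, $A_1\sim A_1'$, $B_0\sim B_0'$, $B_1\sim B_1'$ follow by intersecting and complementing with the measurable sets $\alpha+K$ and $\beta+K$. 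In particular $m(A_0)>0$, $m(B_0)>0$, both are nonempty, and $(A_1+K)\cap A_0=\varnothing$ (and likewise for $B$) by construction.

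The main obstacle is (QP.1): $A_1\sim A_1+K$ and $B_1\sim B_1+K$. Since $A_1\sim A_1'\sim A_1'+K$, every coset of $K$ contained in $A_1'+K$ meets $A_1$ in full measure $m(K)$, so $A_1'+K\subseteq A_1+K$, and the task reduces to ruling out \emph{extra} cosets of $K$ in $A_1+K$ lying outside $A_1'+K$. I plan to do this by decomposing $m_*(A+B)$ over the cosets of $K$, using the elementary additivity of inner measure across a finite family of disjoint clopen pieces. Combining (QP.1) for $(A',B')$ with the elementary observation that the Minkowski sum of a full-measure subset of one coset of $K$ with any positive-measure subset of another coset equals the entire target coset, one concludes that for each coset $c$ of $K$ lying in $A'+B'+K$ other than $\alpha+\beta+K$, the intersection $(A+B)\cap c$ equals $c$ up to a null set. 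By Corollary \ref{QPfour} applied to $(A',B')$, these cosets contribute $m(A)+m(B)-m(A_0)-m(B_0)$ to $m_*(A+B)$; since $m_*(A+B)=m(A)+m(B)$, the residual contribution --- from $\alpha+\beta+K$ together with any coset outside $A'+B'+K$ --- must equal exactly $m(A_0)+m(B_0)$. If some $a\in A$ were to lie in a coset $\gamma+K\not\subseteq A'+K$, then $a+B\subseteq A+B$ would add positive inner measure either by populating a fresh coset outside $A'+B'+K$ or by overflowing $\alpha+\beta+K$ beyond its budget, violating sur-criticality; the symmetric argument rules out extras for $B$, giving (QP.1).

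With (QP.1) established we have $A+K=A'+K$ and $B+K=B'+K$, so the unique-expression status of $\alpha+\beta+K$ in $G/K$ transfers from $(A',B')$ to $(A,B)$, yielding (QP.2). For (QP.3), once (QP.2) is in hand we have $(A+B)\cap(\alpha+\beta+K)=A_0+B_0$, and the same coset decomposition of $m_*(A+B)$ then forces
\[
m_*(A_0+B_0)=m_*(A+B)-\bigl(m(A)+m(B)-m(A_0)-m(B_0)\bigr)=m(A_0)+m(B_0).
\]
The hardest part of the argument is the extra-coset elimination underlying (QP.1): it demands a careful inner-measure accounting in which the budget $m(A)+m(B)$ is exactly consumed by the full cosets of $A'+B'+K$ together with $m_*(A_0+B_0)$, leaving no slack for contributions arising from null-set discrepancies between $A$ (resp.\ $B$) and $A'$ (resp.\ $B'$) that happen to sit in extraneous cosets.
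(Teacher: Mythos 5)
Your reduction of (QP.2) and (QP.3) to the no-extra-coset claim is fine, as is the observation that every coset of $A'+B'+K$ other than $\alpha+\beta+K$ lies in $A+B$ up to a null set (this is where $m(A_0')>0$, $m(B_0')>0$ enter), together with the bookkeeping via Corollary \ref{QPfour}. The gap is in the step you yourself flag as the crux: the asserted dichotomy that a stray $a\in A$ with $a+K\not\subseteq A'+K$ must either populate a fresh coset outside $A'+B'+K$ or overflow $\alpha+\beta+K$ is not justified, and it is false as a piece of pure measure accounting. If the stabilizer of $A'+B'+K$ in the discrete group $G/K$ is nontrivial, the translates $a+B$ can land entirely in cosets of $A'+B'+K$ that are \emph{already} full in $A+B$ modulo null sets, so the stray adds no inner measure at all and sur-criticality of $(A,B)$ yields no contradiction; and even in the overflow scenario your ``budget'' argument needs the strict inequality $m(A_0')+m(B_0')<m(K)$, which is not automatic but is itself a consequence of nonextendibility of $(A',B')$ (if $m_*(A_0'+B_0')=m(K)$ one could enlarge $A_0'$ inside $\alpha+K$ without increasing $m_*(A'+B')$). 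Tellingly, your argument never uses irreducibility or nonextendibility of either pair, yet without them the conclusion genuinely fails, so no counting based only on sur-criticality and (QP) for $(A',B')$ can close this case.

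The paper avoids the global count entirely and kills strays pointwise: after replacing $A'$, $B'$ by $A'\cap A$, $B'\cap B$ one may assume $A'\subseteq A$, $B'\subseteq B$; then for $a\in A\setminus(A_0'+K)$, irreducibility of $(A,B)$ gives $a+B'\subset_m A'+B'$, the unique-expression property and $K$-periodicity of $A_1'$, $B_1'$ upgrade this to $a+B'+K\subset_m A'+B'$, and nonextendibility of $(A',B')$ then forces $a+K\subset_m A'$, hence $a+K\subseteq A_1'+K$. This gives $A_1+K=A_1'+K$ and hence (QP.1) directly, after which (QP.2) and (QP.3) follow as in your last paragraph. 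To repair your proof you would either have to import this irreducibility/nonextendibility argument, or carry out a much finer analysis in $G/K$ (Kneser's theorem plus Lemma \ref{Hholes} to locate all holes of $A'+K$ and $B'+K$ in the cosets of the stabilizer containing $\alpha$ and $\beta$, plus the strict budget inequality) — none of which is present in the proposal.
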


\begin{proof}   We may assume $A'\subseteq A$ and  $B'\subseteq B$, since we can replace $A'$ and $B'$ with $A'\cap A$ and $B'\cap B$ while maintaining the hypotheses of the lemma -- this follows from the irreducibility of $(A,B)$ and $(A',B')$.  Let $K$ be the quasi-period of $(A',B')$ from conclusion (QP). Assume, without loss of generality, that $A_1'\neq \varnothing$.  Define $A_0:=A\cap (A_0'+K)$ and  $A_1:= A\setminus A_0$.   We will show that $A_1\sim A_1+K$.

\smallskip

Let $a\in A_1$. We aim to show that $a+K\subseteq A_1'+K$.  Irreducibility of $(A,B)$ implies $a+B'\subset_m A'+B'$.   Since $a\notin A_0'+K$, we have $a+B'\subset_m (A_1'+B')\cup (A+B_1')$, so that $a+B'+K\subset_m A'+B'$.  Then $a+K\subset_m A'$ by the nonextendibility of $(A',B')$.   Consequently, $a+K\subseteq A_1'+K$, and we find $A_1+K=A_1'+K$.  The similarity $A\sim A'$ then implies $A_1\sim A_1'$, and $A=A_1\cup A_0$ is a quasi-periodic decomposition.  Reversing the roles of $A$ and $B$, we define $B_0=B\cap (B_0'+K)$ and $B_1=B\setminus B_0$.  Now it is routine to verify that $(A,B)$ also satisfies (QP) of Theorem \ref{main}.  \end{proof}

\subsection{Measurability and trivializing $H(A+B)$.}\label{trivializing}

\begin{lemma}\label{measurability}
If Theorem \textup{\ref{main}} holds under the additional assumption that $A+B$ is measurable,  then Theorem \textup{\ref{main}} holds in general.
\end{lemma}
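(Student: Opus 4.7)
The plan is to approximate $(A,B)$ from within by measurable sets whose Minkowski sum is automatically measurable, apply the assumed instance of Theorem \ref{main} to the approximation, and transfer the conclusion back to $(A,B)$.  Throughout I assume $(A,B)$ is nonextendible, since otherwise conclusion (E) is immediate.  If additionally $(A,B)$ is reducible, Lemma \ref{reduciblelemma2} directly yields conclusion (QP) with $m(A_0)=0$ or $m(B_0)=0$, so I reduce to the case where $(A,B)$ is nonextendible and irreducible.

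Using inner regularity of $m$, I choose $F_\sigma$ subsets $\tilde A\subseteq A$ and $\tilde B\subseteq B$ with $m(\tilde A)=m(A)$ and $m(\tilde B)=m(B)$; since $F_\sigma$ sets are $\sigma$-compact, $\tilde A+\tilde B$ is $\sigma$-compact and hence Borel measurable.  Irreducibility of $(A,B)$ forces $m(\tilde A+\tilde B)=m_*(A+B)=m(\tilde A)+m(\tilde B)$, so $(\tilde A,\tilde B)$ is itself a sur-critical pair with measurable sumset, and the assumed instance of Theorem \ref{main} produces one of the conclusions (P), (E), (K), (QP) for $(\tilde A,\tilde B)$.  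I also record the estimate $m_*((A+B)\setminus(\tilde A+\tilde B))=0$, which follows because any compact subset of $(A+B)\setminus(\tilde A+\tilde B)$ is disjoint from $\tilde A+\tilde B$, and the union of the two cannot exceed $m_*(A+B)=m(\tilde A+\tilde B)$ in measure.

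To transfer conclusions (P), (E), and (QP) to $(A,B)$, I would check that the essential stabilizer $H:=H(\tilde A+\tilde B)$ is compact and open, and then invoke Lemma \ref{easy}.  For (P), the periodicity $\tilde A+K\sim \tilde A$ and $\tilde B+K\sim \tilde B$ combined with the Steinhaus-style observation that two subsets of $K$ of full Haar measure have sum $\sim K$ yields $\tilde A+\tilde B\sim (\tilde A+K)+(\tilde B+K)$, so $K\leqslant H$.  For (E), $H$ is compact open by Proposition \ref{Ktopinequality} applied to an extending pair, as reviewed in \S\ref{Eremark}.  For (QP), the same reasoning handles the degenerate case $m(\tilde A_0)=0$ or $m(\tilde B_0)=0$, while Lemma \ref{QPtoQP} transfers (QP) directly from $(\tilde A,\tilde B)$ to $(A,B)$ when both $m(\tilde A_0)>0$ and $m(\tilde B_0)>0$.

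The main obstacle is case (K), where $H=\ker\chi$ may have Haar measure zero in $G$, so Lemma \ref{easy} cannot be invoked.  For this case I would argue directly using the interval property in \S\ref{intervalsdef}.  Suppose for contradiction that some $a_*\in A\setminus(a+\chi^{-1}(I))$ exists.  If $a_*\notin a+K$, then $a_*+\tilde B$ is measurable of measure $m(B)>0$ and lies in a $K$-coset disjoint from the coset $a+b+K$ containing $\tilde A+\tilde B$, producing a measurable positive-measure subset of $(A+B)\setminus(\tilde A+\tilde B)$ and contradicting the estimate above.  If instead $a_*\in a+K$ with $\xi:=\chi(a_*-a)\notin I$, then $a_*+\tilde B\sim a+b+\chi^{-1}(\xi+J)$, and the interval property furnishes a positive-measure subset of this set disjoint from $a+b+\chi^{-1}(I+J)\sim \tilde A+\tilde B$, again contradicting the estimate.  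Hence $A\subseteq a+\chi^{-1}(I)$, and symmetrically $B\subseteq b+\chi^{-1}(J)$, giving conclusion (K) for $(A,B)$.
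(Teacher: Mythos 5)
Your overall strategy is the same as the paper's: reduce to the irreducible nonextendible case, approximate from inside by $\sigma$-compact sets $\tilde A\subseteq A$, $\tilde B\subseteq B$ of full measure (so $\tilde A+\tilde B$ is measurable and, by irreducibility, has measure $m_*(A+B)$), apply the assumed case of Theorem \ref{main} to $(\tilde A,\tilde B)$, and transfer the conclusion back via Lemma \ref{easy}, Lemma \ref{QPtoQP}, and an interval argument for (K). However, there is a genuine gap in your treatment of the (QP) case. You claim that ``the same reasoning'' (i.e.\ showing $H(\tilde A+\tilde B)$ is compact and open and invoking Lemma \ref{easy}) handles the degenerate case $m(\tilde A_0)=0$ or $m(\tilde B_0)=0$. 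That works when \emph{both} are null: then (QP.2), (QP.3) and measurability of $\tilde A+\tilde B$ show the sumset is similar to a $K$-periodic set, so $K\leqslant H(\tilde A+\tilde B)$. But when exactly one is null, say $m(\tilde A_0)=0<m(\tilde B_0)<m(K)$, the trace of $\tilde A+\tilde B$ on the unique-expression coset $\tilde A_0+\tilde B_0+K$ has measure $m(\tilde B_0)$, strictly between $0$ and $m(K)$, so the sumset is not essentially $K$-periodic and nothing forces $m(H(\tilde A+\tilde B))>0$; Lemma \ref{easy} is simply unavailable. The paper disposes of this subcase differently: by (QP.2), deleting the null set $\tilde A_0$ removes the inner-measure-$m(\tilde B_0)$ portion of the sumset on that coset, so $(\tilde A,\tilde B)$ would be reducible, contradicting the irreducibility inherited from $(A,B)$. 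Your proof needs this (or an equivalent) argument; as written, the step fails.

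Two smaller points. In case (K), the interval property of \S\ref{intervalsdef} requires $m_{\mathbb T}(I)+m_{\mathbb T}(J)<1$; sur-criticality gives $m_{\mathbb T}(I)+m_{\mathbb T}(J)\leq 1$, and in the boundary case of equality your second subcase breaks down, but then $\tilde A+\tilde B$ essentially fills a coset of $K$, so $m(H(\tilde A+\tilde B))\geq m(K)>0$ and one reverts to Lemma \ref{easy}; you should say this. Also, Lemma \ref{QPtoQP} requires $(\tilde A,\tilde B)$ to be nonextendible, which does not follow automatically from nonextendibility of $(A,B)$; as in the paper, you should order the cases so that an extendible $(\tilde A,\tilde B)$ (equivalently, one satisfying (E)) is routed through the Proposition \ref{Ktopinequality}/Lemma \ref{easy} branch first, and only a nonextendible $(\tilde A,\tilde B)$ reaches the Lemma \ref{QPtoQP} step.
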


\begin{proof}  We first list cases where we obtain the conclusion of Theorem \ref{main} without assuming $A+B$ is measurable.  Excluding those cases, we then show how to deduce the conclusion of Theorem \ref{main} from the special case where $A+B$ is assumed to be measurable.

\smallskip

If $(A,B)$ is extendible, then $(A,B)$ satisfies conclusion (E) of Theorem \ref{main}.  If $(A,B)$ is nonextendible and reducible,  then $(A,B)$ satisfies conclusion (QP) of Theorem \ref{main}, by Lemma \ref{reduciblelemma2}.  We may therefore assume that $(A,B)$ is nonextendible and irreducible.  Let $A'\subseteq A$ and $B'\subseteq B$ be countable unions of compact sets with $m(A')=m(A)$ and $m(B')=m(B)$, so that $A'+B'$ is measurable.

\smallskip

Irreducibility of $(A,B)$ implies $m(A'+B')=m_*(A+B)$, so $(A',B')$ satisfies the hypotheses of Theorem \ref{main}.  If $(A',B')$ satisfies conclusion (P) or conclusion (E) of Theorem \ref{main}, then $m(H(A'+B'))>0$ by Proposition \ref{Ktopinequality}.  We apply Lemma \ref{easy} and conclude that $(A,B)$ satisfies one of conclusion (P) or (QP).  If $(A',B')$ satisfies (K), so that $A'\sim a+\chi^{-1}(I)$ and $B'\sim b+\chi^{-1}(J)$, where $I$, $J\subseteq \mathbb T$ are intervals and $\chi:K\to \mathbb T$ is a continuous surjective homomorphism from a compact open subgroup $K\leqslant G$,  then a routine argument (\S \ref{intervalsdef}) shows that $A$ and $B$ must be contained in $a+\chi^{-1}(I)$, and $b+\chi^{-1}(J)$, respectively.   We conclude that $(A,B)$ satisfies (K).

\smallskip

The only remaining possibility is that $(A',B')$ satisfies (QP), but not (P) or (E).  Then $(A',B')$ is nonextendible, while irreducibility of $(A,B)$  implies that of $(A',B')$.  Writing $A'=A_1'\cup A_0'$ and $B'=B_1'\cup B_0'$ for the decomposition of $(A',B')$ in (QP) and $K$ for the corresponding subgroup, we consider different cases based on the measures of $A_0'$ and $B_0'$.

 \smallskip

 If $m(A_0')>0$ and $m(B_0')>0$,  Lemma \ref{QPtoQP} implies $(A,B)$ satisfies (QP).

 \smallskip

 If $m(A_0')=0$ and $m(B_0')>0$, or vice versa, we derive a contradiction.  Under these conditions, $(A',B')$ is reducible, since $A_0'+B_0'+K$ is a unique expression element of $A'+B'+K$ in $G/K$. Reducibility of $(A',B')$ contradicts our present assumptions.

 \smallskip

If $m(A_0')=m(B_0')=0$, the irreducibility of $(A,B)$ implies $A+B\sim A_1'+B_1'$.  Since $H(A_1'+B_1')$ has positive Haar measure, we apply Lemma \ref{easy} with $\tilde{A}=A_1'$, $\tilde{B}=B_1'$ and conclude that $(A,B)$ satisfies (P) or (QP) of Theorem \ref{main}.  \end{proof}

The next lemma is a rephrasing of Lemma 7 of \cite{Kneser56}; we include a proof for completeness.  Under the standing assumption that $G$ is compact, the group $H(S)$ below is compact, so the convention that Haar measure is normalized applies:  $m_{H(S)}(H(S))=1$.

\begin{lemma}\label{H(S)}
If $S\subseteq G$ is measurable, set $H:=H(S)$ and let
$$
E:=\{x\in G: 0<m_{H}(S-x)<1 \text{ \textup{or} } H\cap (S-x) \text{ \textup{is not} $m_{H}$\textup{-measurable}}\}.
$$ Then $m(E)=0$.
\end{lemma}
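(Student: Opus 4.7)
The plan is to exploit the defining property $S+t \sim S$ for $t \in H$ and apply Tonelli on the product $G \times H$ to show that almost every $H$-coset is either essentially contained in $S$ or essentially disjoint from $S$.

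First, the measurability alternative in the definition of $E$ is handled immediately by Proposition \ref{Weil} applied to $f = \mathbf{1}_S$: for $m_G$-a.e.\ $x$, the set $H \cap (S-x)$ is $m_H$-measurable, and the function $x \mapsto m_H((S-x)\cap H)$ is $m_G$-measurable. So it suffices to show $m_G(\{x : 0 < m_H(S-x) < 1\}) = 0$.

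The substance is a $0$-$1$ dichotomy argument. After replacing $S$ by a Borel representative modulo a $m_G$-null set, the function $F(x,t) := |\mathbf{1}_S(x) - \mathbf{1}_S(x+t)|$ is jointly Borel-measurable and nonnegative on $G \times H$. For each fixed $t \in H$, the definition of $H = H(S)$ gives
\begin{align*}
\int_G F(x,t)\, dm_G(x) = m_G(S \triangle (S+t)) = 0.
\end{align*}
Integrating over $t \in H$ and swapping by Tonelli yields
\begin{align*}
\int_G \int_H |\mathbf{1}_S(x) - \mathbf{1}_S(x+t)|\, dm_H(t)\, dm_G(x) = 0,
\end{align*}
so for $m_G$-a.e.\ $x$ we have $\mathbf{1}_S(x+t) = \mathbf{1}_S(x)$ for $m_H$-a.e.\ $t \in H$. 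For such $x$: if $x \in S$ then $x+t \in S$ for $m_H$-a.e.\ $t \in H$, so $m_H((S-x) \cap H) = m_H(H) = 1$; if $x \notin S$, the analogous computation gives $m_H((S-x) \cap H) = 0$. Combined with the measurability step, this places $E$ inside a $m_G$-null set.

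There is no serious obstacle here; the main content is the symmetric application of Tonelli. The only mild care required is producing a jointly measurable representative of $\mathbf{1}_S$ so that Tonelli applies cleanly, which is routine since $m_G$-measurable sets agree with Borel sets modulo $m_G$-null sets.
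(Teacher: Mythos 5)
Your proposal is correct and uses essentially the same argument as the paper: a Fubini/Tonelli interchange over $G\times H$ combined with the defining property $S+t\sim S$ for $t\in H$, with the measurability alternative disposed of by Proposition \ref{Weil}. The only cosmetic difference is the choice of integrand --- you integrate $|\mathbf{1}_S(x)-\mathbf{1}_S(x+t)|$, which yields the $0$--$1$ dichotomy for $S$ and $S^c$ in one pass, whereas the paper integrates $\mathbf{1}_S(x-z)\mathbf{1}_S(x)$ and then repeats the computation with $S$ replaced by $S^c$.
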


\begin{proof}  Consider the integrals
\begin{align*}
I_1&:=\int \int 1_S(x-z)1_S(x) \, dm_H(z) \, dm(x)\\
I_2&:= \int \int 1_S(x-z)1_S(x) \, dm(x) \, dm_H(z).
\end{align*}
Proposition \ref{Weil}, these integrals are well-defined, and $I_1$ can be computed as
\begin{align*}
\int \int 1_S(x-z) \, dm_H(z)\, 1_S(x) \, dm(x)=\int m_H(S-x)1_S(x)\, dm.
\end{align*}
 while $I_2 =\int m((S+z)\cap S)\, dm_H(z)=m(S)$.   By Fubini's theorem, $I_1=I_2$, meaning $\int m_H(S-x) 1_S(x)\, dm(x) =m(S)$.  Since $0\leq m_H(S-x)\leq 1$, the last equation implies
 \begin{align}\label{S-x}
 m_H(S-x)=1 \text{ for $m$-almost every $x\in S$}.
 \end{align}
 Replacing $S$ with $S^c$, (\ref{S-x}) implies $m_H(S-x)=0$ for $m$-almost every $x\in S^{c}$.  The assertion follows. \end{proof}

\begin{lemma}\label{H(A+B)}
If $(A,B)$ is irreducible and satisfies the hypotheses of Theorem \textup{\ref{main}}, $A+B$ is measurable, and $m(H(A+B))=0$, then there are measurable sets $A'\subseteq A$, $B'\subseteq B$ such that $m(A')=m(A)$, $m(B')=m(B)$, $A'\sim A'+H$, $B'\sim B'+H$, and $A'+B'+H\sim A'+B'\sim A+B$, where $H:=H(A+B)$.
\end{lemma}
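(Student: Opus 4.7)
The plan has three stages: deduce that $(A,B)$ is nonextendible; transfer the $H$-invariance of $A+B$ to $H$-invariance of $A$ and $B$ separately; and build $A',B'$ as $\sigma$-compact subsets of ``density-one'' envelopes of $A$ and $B$. Nonextendibility is immediate from the discussion following Proposition \ref{Ktopinequality}: a sur-critical extendible pair would force $H(A+B)$ to be compact and open, but a compact open subgroup of a compact group has positive Haar measure, contradicting $m(H)=0$.

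For $H\leq H(A)$, fix $t\in H$ and consider the measurable enlargement $A\cup(A+t)$. Since $(A+B)+t\sim A+B$, the sumset $(A\cup(A+t))+B=(A+B)\cup((A+B)+t)$ is similar to $A+B$, so $m_*((A\cup(A+t))+B)=m_*(A+B)$; nonextendibility then forces $A\cup(A+t)\sim A$, i.e.\ $A+t\subset_m A$, and repeating the argument with $-t\in H$ upgrades this to $A+t\sim A$, so $t\in H(A)$. Symmetrically $H\leq H(B)$. With $H\leq H(A)$, the proof of Lemma \ref{H(S)} carries over verbatim with $H$ in place of $H(A)$ (it only uses $m((S+z)\cap S)=m(S)$ for $z$ in the subgroup), so the measurable set
\[
A^{\circ}:=\{x\in G:m_H((A-x)\cap H)=1\}
\]
satisfies $A^{\circ}+H=A^{\circ}$ (by $H$-invariance of $m_H$) and $A\sim A^{\circ}$. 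Define $B^{\circ}$ analogously.

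A Cauchy--Davenport-style observation on $H$ gives $A^{\circ}+B^{\circ}\subseteq A+B$: for $a\in A^{\circ}$ and $b\in B^{\circ}$, the sets $S_a:=\{t\in H:a+t\in A\}$ and $S_b:=\{t\in H:b+t\in B\}$ have $m_H$-measure $1$, so for every $t\in H$ we have $m_H(S_a)+m_H(t-S_b)>m_H(H)$, hence $S_a\cap(t-S_b)\neq\varnothing$ and $t\in S_a+S_b$; thus $S_a+S_b=H$ and $a+b+H\subseteq A+B$. Using inner regularity, pick $\sigma$-compact $A'\subseteq A\cap A^{\circ}$ and $B'\subseteq B\cap B^{\circ}$ with $m(A')=m(A)$ and $m(B')=m(B)$; then $A'+H$, $B'+H$, $A'+B'$, and $(A'+B')+H$ are all $\sigma$-compact, hence measurable. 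The inclusion $A'+H\subseteq A^{\circ}+H=A^{\circ}$ together with $m(A^{\circ})=m(A)=m(A')$ forces $A'\sim A'+H$, and identically $B'\sim B'+H$. Irreducibility plus $A'+B'\subseteq A+B$ gives $m(A'+B')=m_*(A+B)=m(A+B)$ and hence $A'+B'\sim A+B$. Finally $(A'+B')+H\subseteq A^{\circ}+B^{\circ}+H=A^{\circ}+B^{\circ}\subseteq A+B$, which together with $m((A'+B')+H)\geq m(A'+B')=m(A+B)$ forces $A'+B'\sim(A'+B')+H$.

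The main obstacle is the step $H\leq H(A)\cap H(B)$, which transfers $H$-invariance from the sumset to the individual factors; this succeeds only by invoking nonextendibility on each enlargement $A\cup(A+t)$, and nonextendibility itself is available only because $m(H)=0$ rules out the compact-open stabilizer that Proposition \ref{Ktopinequality} would otherwise produce. Once this transfer is achieved, the remainder of the proof is measure-theoretic bookkeeping around the $H$-invariant envelopes $A^{\circ}$, $B^{\circ}$ and the elementary fact that two $m_H$-full subsets of $H$ have sumset equal to $H$.
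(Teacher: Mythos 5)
Your proof is correct, but it is organized around a different key step than the paper's. The paper never establishes that $H:=H(A+B)$ stabilizes $A$ and $B$ themselves: it takes envelopes $S_1=\{x:m_H(A-x)>0\}$, $S_2=\{x:m_H(B-x)>0\}$ (positive, not full, slice measure), picks $\sigma$-compact $A'\subseteq A\cap S_1$, $B'\subseteq B\cap S_2$, proves $A'+B'+H\sim A'+B'$ by the disintegration inequality over $H$-cosets together with Lemma \ref{H(S)} applied to the \emph{sumset} $A+B$, and only at the end deduces $A'\sim A'+H$ by contradiction: otherwise $(A'+H,B')$ would be a critical pair and Proposition \ref{Ktopinequality} would give $m(H(A+B))>0$. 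You instead first derive nonextendibility from $m(H)=0$ (legitimate -- the paper does exactly this in Corollary \ref{reduciblecor}, and it uses the measurability of $A+B$ assumed here), then transfer the stabilizer: for $t\in H$ the exact identity $(A\cup(A+t))+B=(A+B)\cup(A+B+t)\sim A+B$ plus nonextendibility gives $A+t\subset_m A$, and applying this to $-t$ yields $H\leqslant H(A)\cap H(B)$, a fact the paper never states. With that in hand you can run the zero--one law of Lemma \ref{H(S)} on $A$ and $B$ separately (the proof indeed only needs $m((S+z)\cap S)=m(S)$ for $z$ in the chosen compact subgroup), obtaining exactly $H$-invariant full-density envelopes $A^{\circ}$, $B^{\circ}$, and the sumset statements then follow from the elementary observation that two $m_H$-full subsets of $H$ have sumset all of $H$; irreducibility is used, as in the paper, only to force $A'+B'\sim A+B$. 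What each route buys: yours yields the stronger intermediate fact $H(A+B)\leqslant H(A)\cap H(B)$ for nonextendible pairs and replaces the paper's coset-disintegration estimate for $A'+B'+H$ and its final criticality contradiction by exact invariance of the envelopes plus a pigeonhole argument; the paper's route avoids invoking nonextendibility altogether (working only from irreducibility and $m(H)=0$) at the cost of handling the sumset's slices directly. Only cosmetic caveats remain on your side, at the same level of rigor the paper itself adopts: measurability of $A^{\circ}$, $B^{\circ}$ (and of $A\setminus A^{\circ}$ being null) should be attributed to Proposition \ref{Weil} and the completion of $m$, just as the paper does for $S_1$, $S_2$.
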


\begin{proof}  Let
\begin{align*}
S_1:=\{x\in G: m_H(A-x)>0\},\ \  S_2:=\{x\in G: m_H(B-x)>0\}.
\end{align*}
By Proposition \ref{Weil}, $S_1$ and $S_2$ are measurable.  Let $A''=A\cap S_1$, $B''=B\cap S_2$;  Proposition \ref{Weil} implies $m(A'')=m(A)$ and $m(B'')=m(B)$. Let $A'\subseteq A''$ and $B'\subseteq B''$ be countable unions of compact sets having $m(A')=m(A)$ and $m(B')=m(B)$.   The irreducibility of $(A,B)$ implies  $A'+B'\sim A''+B''\sim A+B$, so that $H(A'+B')=H$. By Lemma \ref{H(S)}, the set
\begin{align*}
E=\{x\in G: 0<m_H(A+B-x)<1 \text{ or } H\cap(A+B-x) \text{ is not $m_{H}$-measurable}\}
\end{align*} has $m_G(E)=0$.  By Lemma \ref{H(S)} and the definitions of $S_A$, $S_B$, and $E$,
 \begin{align}\label{=1}
 m_H(A'+B'+H-x)\leq m_H(A''+B''-x)=1
  \end{align}
  whenever $x\notin E$ and $(A''+B''-x)\cap H$ is measurable and nonempty.  Since $A'$ and $B'$ are countable unions of compact sets and $H$ is compact, $A'+B'+H$ is measurable.  We now show $A'+B'+H\sim A'+B'$.  By Proposition \ref{Weil} and (\ref{=1}), then
 \begin{align*}
 m(A'+B'+H)&=\int m_H(A'+B'+H-x)\, dm(x) \\
 &= \int_{E^c} m_H(A'+B'+H-x)\, dm(x)\\
 &\leq \int_{E^c} m_H(A''+B''-x)\, dm(x)\\
 &= m(A''+B'').
\end{align*}
Thus $m(A'+B'+H)\leq m(A'+B')$, so $A'+B'+H\sim A'+B'$, as desired.

\smallskip

To show that $A'+H\sim A'$, suppose otherwise to get a contradiction.  Then $m(A'+H)>m(A')$, so $m((A'+H)+B')=m(A'+B')<m(A'+H)+m(B')$, and Proposition \ref{Ktopinequality} implies $m(H(A'+H+B'))>0$.  Since $A'+H+B'\sim A+B$, we then have $m(H(A+B))>0$ contradicting the hypothesis.   Similarly $B'+H\sim B'$.  \end{proof}

\begin{lemma}\label{specialtogeneral}
If Theorem \textup{\ref{main}} holds under the additional assumption that $A+B$ is measurable and $H(A+B)=\{0\}$ then  Theorem \textup{\ref{main}} holds in general.
\end{lemma}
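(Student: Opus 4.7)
The plan is to apply Lemma \ref{measurability} to reduce to the case where $A+B$ is measurable, and then pass to the quotient group $G/H$ with $H := H(A+B)$ to gain the additional hypothesis that the essential stabilizer is trivial. First I would dispose of the easy subcases: extendibility directly gives (E); Lemma \ref{reduciblelemma2} handles reducible nonextendible pairs (giving (QP)); and if $m(H) > 0$, Lemma \ref{easy} (with $\tilde A = A$ and $\tilde B = B$) gives (P) or (QP). So assume $(A,B)$ is irreducible and nonextendible, $A+B$ is measurable, and $m(H) = 0$.

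Next, Lemma \ref{H(A+B)} produces $A' \subseteq A$, $B' \subseteq B$ with $m(A')=m(A)$, $m(B')=m(B)$, $A' \sim A'+H$, $B' \sim B'+H$, and $A'+B'+H \sim A'+B' \sim A+B$. Set $\hat A := A'+H$ and $\hat B := B'+H$, both $H$-periodic and $\sim A$, $\sim B$ respectively. Via the quotient $\pi: G \to G/H$, let $\bar A := \pi(\hat A)$ and $\bar B := \pi(\hat B)$; since $\pi^{-1}(\bar A) = \hat A$ and $\pi^{-1}(\bar B) = \hat B$, the disintegration of Haar measure (Proposition \ref{Weil}) applied to $\pi$-saturated sets gives $m_{G/H}(\bar A)=m(A)$, $m_{G/H}(\bar B)=m(B)$, and $m_{G/H}(\bar A + \bar B) = m(A+B) = m(A)+m(B)$, with $\bar A+\bar B$ measurable. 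A direct computation using $\pi^{-1}(\bar A+\bar B) = A'+B'+H \sim A+B$ shows $H(\bar A + \bar B) = \{0_{G/H}\}$, so the assumed special case of Theorem \ref{main} applies to $(\bar A, \bar B)$ in $G/H$, yielding one of (P), (E), (K), (QP).

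Because $m(H)=0$ forces $H$ not open and hence $G/H$ not discrete, $\{0_{G/H}\}$ is not compact open in $G/H$: (P) would produce a compact open $\bar K \subseteq H(\bar A+\bar B) = \{0_{G/H}\}$, forcing $\bar K = \{0_{G/H}\}$ to be open, contradiction; (E) would, via Proposition \ref{Ktopinequality}, force the extension's essential stabilizer compact open, but the extended sumset is $\sim \bar A+\bar B$, yielding again $\{0_{G/H}\}$, contradiction. Thus $(\bar A, \bar B)$ satisfies (K) or (QP). Moreover, this rules out extendibility of $(\bar A, \bar B)$, and an $H$-saturation argument (any extension $\tilde A$ of $\hat A$ may be replaced by $\tilde A + H$ without changing $m_*(\tilde A + \hat B)$, since $\hat B$ is $H$-periodic) transfers nonextendibility to $(\hat A, \hat B)$; irreducibility of $(\hat A, \hat B)$ is inherited directly from that of $(A, B)$ via $\hat A \sim A$, $\hat B \sim B$.

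Finally I would lift the (K) or (QP) conclusion back to $(A, B)$. For (K): with $K := \pi^{-1}(\bar K)$, $\chi := \bar\chi \circ \pi|_K$, $I := \bar I$, $J := \bar J$, and lifts $a, b$ of $\bar a, \bar b$, the relations $\hat A \subseteq a + \chi^{-1}(I)$, $\hat B \subseteq b + \chi^{-1}(J)$, $m(A) = m(\chi^{-1}(I))$, $m(B) = m(\chi^{-1}(J))$ follow from $\pi^{-1}$. The set containment upgrades from $\hat A$ to $A$ because any $a_* \in A \setminus (a + \chi^{-1}(I))$, combined with $B \subset_m b + \chi^{-1}(J)$ and $A+B \subset_m a+b+\chi^{-1}(I+J)$, would, via the interval property of \S \ref{intervalsdef}, give $m((a_*+B) \setminus (A+B)) > 0$, contradicting $a_* + B \subseteq A+B$. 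For (QP): the pair $(\hat A, \hat B)$ satisfies (QP) with $K := \pi^{-1}(\bar K)$ by a clean $\pi^{-1}$-lift of the $G/H$ decomposition, and when $m(\hat A_0), m(\hat B_0) > 0$, Lemma \ref{QPtoQP} directly transfers (QP) to $(A, B)$; in the degenerate case $m(\hat A_0) = 0$ or $m(\hat B_0) = 0$, a parallel argument using irreducibility of $(A, B)$, the unique-expression structure (QP.2), and nonextendibility of $(A, B)$ handles the additional bookkeeping. The main obstacle is this (QP) lift, where measure-zero ``stray'' points of $A \setminus \hat A$ could \emph{a priori} place $A_1$ into $K$-cosets not hit by $\hat A_1$ and destroy (QP.1); the combination of irreducibility, (QP.2), and nonextendibility is needed to rule this out.
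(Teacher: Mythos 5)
Your proposal is correct and takes essentially the same route as the paper: reduce via Lemma \ref{measurability}, dispose of the extendible, reducible, and $m(H(A+B))>0$ cases (the last by Lemma \ref{easy}), use Lemma \ref{H(A+B)} to produce the $H$-saturated pair, pass to $G/H$ where the stabilizer is trivial and the quotient pair is an irreducible nonextendible sur-critical pair, apply the assumed special case there, rule out (P) and (E), and lift (K) by the interval property of \S\ref{intervalsdef} and (QP) by Lemma \ref{QPtoQP}. The only point worth noting is that the degenerate case $m(\hat A_0)=0$ or $m(\hat B_0)=0$, which you defer to ``additional bookkeeping,'' is in the paper simply shown to be impossible: irreducibility of the quotient pair together with the unique-expression coset forces a nontrivial essential stabilizer for $A'+B'+H$, contradicting $m(H(A+B))=0$, so no separate lifting argument is needed there.
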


\begin{proof}   By Lemma \ref{measurability} we may assume $A+B$ is measurable.  As in the proof of Lemma \ref{measurability}, we may assume that $(A,B)$ is irreducible and nonextendible.   Write $K:=H(A+B)$.  We first dispense with the case $m(K)>0$.  In that case, Lemma \ref{easy} implies $(A,B)$ satisfies conclusion (P) or (QP) of Theorem \ref{main}.  Now assume $m(K)=0$.  Let $A'\subseteq A$ and  $B'\subseteq B$ be as in the conclusion of Lemma \ref{H(A+B)}, so that $A'+K\sim A'$, $B'+K\sim B'$, and $A'+B'+K\sim A'+B'\sim A+B$.  Observe that $G/K$ is infinite, since $m(K)=0$.  For the remainder of the proof, write $G'$ for $G/K$, and write $m'$ for $m_{G'}$.

\begin{claim}
Viewing $A'+B'+K$ as a subset of $G'$, the pair $(A'+K,B'+K)$ is an irreducible, nonextendible, sur-critical pair for $G'$, and $H(A'+B'+K)=\{0_{G'}\}$.  In particular $m'(H(A'+B'+K))=0$, since $G'$ is infinite.
\end{claim}

\noindent\textit{Proof of Claim.}  We first show $H(A'+B'+K)=\{0_{G'}\}$ by contradiction.  Supposing otherwise, there exists $z\neq 0_{G'}\in G'$ such that \begin{align}\label{triangle1}
m'((A'+B'+K)\triangle(A'+B'+K+z))=0.
   \end{align} If $t\in G$ is such that $t+K=z$, then $t\notin K$, and by Proposition \ref{Weil} and the choice of $A'$ and $B'$,
   \begin{align*}
m_G((A+B)\triangle (A+B+t)) &= m_G((A'+B'+K)\triangle (A'+B'+K+t))\\
&=m'((A'+B'+K)\triangle(A'+B'+K+z))\\
&=0 ,
\end{align*} contradicting the definition of $K$.

\smallskip

If $(A'+K,B'+K)$ is extendible, then $m'(H(A'+B'+K))>0$ by Proposition \ref{Ktopinequality}, contradicting the previous paragraph.

\smallskip

If $(A'+K,B'+K)$ is reducible, there exist $A''\subseteq A'+K$ and $B''\subseteq B'+K$ such that $m'(A''+K)=m'(A'+K)$, $m'(B''+K)=m'(B'+K)$, and
\begin{align*}
m'(A''+K+B''+K)<m'(A'+K+B'+K).
  \end{align*}
Setting $C=A\cap (A''+K)$, $D=B\cap (B''+K)$, we apply Proposition \ref{Weil} and find that $m(C)=m(A)$, $m(D)=m(B)$, and $m(C+D)<m(A)+m(B)$.  This implies $(A,B)$ is reducible, contradicting our assumptions.  \hfill $\square$

\smallskip

Now we must show that if the pair $(A'+K,B'+K)$ satisfies any of the conclusions (P), (E), (K), or (QP) of Theorem \ref{main}, then so does the pair $(A,B)$.

\smallskip

If $(A'+K,B'+K)$ satisfies (P) or (E), then $H((A'+K)+(B'+K))\neq \{0_{G'}\}$, contradicting the claim above.  This case is vacuous.

\smallskip

If $(A'+K,B'+K)$ satisfies (K), let $\chi': G'\to \mathbb T$ be the homomorphism of conclusion (K), and let $I$, $J\subseteq \mathbb T$ be the corresponding intervals. If $\chi=\chi'\circ \phi$, where $\phi:G\to G'$ is the quotient map, it is easy to check (\S \ref{intervalsdef}) that $(A,B)$ satisfies $m(A)=m(\chi^{-1}(I))$, $m(B)=m(\chi^{-1}(J))$, while $A\subseteq a+\chi^{-1}(I), $ $B\subseteq b+\chi^{-1}(J)$ for some $a$, $b\in G$. We then see that $(A,B)$ satisfies (K).

\smallskip

If $(A'+K,B'+K)$ satisfies (QP) but not (P) or (E), let $A'+K=A_1'\cup A_0'$ and   $B'+K=B_1'\cup B_0'$ be decompositions of $A'+K$ and $B'+K$ satisfying (QP) with quasi-period $W$, such that $A_0'+B_0'$ is a unique expression element of $A'+B'+W+K$ in $G'/W$, and $m'(A_0'+K+B_0'+K)=m'(A_0'+K)+m'(B_0'+K)$.  Regarding $B_1'$, $B_0'$, $A_1'$, and $A_0'$ as subsets of $G$, we see that $A_1'\cup A_0'$ and $B_1'\cup B_0'$ are decompositions of $A'+K$ and $B'+K$ satisfying (QP) with quasi-period $W+K$.

\smallskip

 If $m(A_0'+K)=0$ or $m(B_0'+K)=0$, irreducibility of $(A'+K,B'+K)$ implies $m(H(A'+B'+K))>0$,  so that $m(H(A+B))>0$, contradicting our present assumptions. Thus we may assume $m(A_0'+K)>0$ and $m(B_0'+K)>0$, so that Lemma \ref{QPtoQP} applies to $(A',B')$ and $(A,B)$.   We conclude $(A,B)$ satisfies (QP).  \end{proof}

\subsection{Quasi-periodicity and complements} Throughout this subsection we make the standing assumption that $A+B$ is measurable in addition our previous assumption that $G$ is compact.

\begin{lemma}\label{complements}  If $(A,B)$ is an irreducible nonextendible sur-critical pair satisfying $m(A)>0$, $m(B)>0$, and $m(H(A+B))=0$, then
\begin{align}\label{comp1}
-B+ (A+ B)^c \sim A^c,
\end{align}
\begin{align}\label{comp2}
-A+ (A+ B)^c \sim B^c,
\end{align}
 and both $(-B,(A+B)^c)$ and $(-A,(A+B)^c)$ are irreducible nonextendible sur-critical pairs with $m(H(-B+(A+B)^c))=0$ and $m(H(-A+(A+B)^c))=0$.

\end{lemma}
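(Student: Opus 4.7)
My plan is to establish all five conclusions for the pair $(-B, (A+B)^c)$; the statements about $(-A, (A+B)^c)$ follow by interchanging $A$ and $B$. The heart of the argument is the similarity (\ref{comp1}), from which everything else flows. First I would observe the pointwise containment $-B + (A+B)^c \subseteq A^c$: if $-b + y = a \in A$ for some $b \in B$ and $y \notin A+B$, then $y = a + b \in A + B$, a contradiction. For the reverse similarity, I would let $E := A^c \setminus (-B + (A+B)^c)$, a measurable set; since membership in $-B + (A+B)^c$ is a pointwise condition, $E$ equals $\{x \in A^c : x + B \subseteq A + B\}$, so $(A \cup E) + B = A + B$ exactly. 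Thus $m_*((A \cup E) + B) = m(A + B) = m_*(A + B)$, and since $A \cup E$ is a measurable superset of $A$, nonextendibility of $(A, B)$ forces $m(A \cup E) = m(A)$, i.e., $m(E) = 0$. This yields (\ref{comp1}), and (\ref{comp2}) follows symmetrically.

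Sur-criticality and the vanishing of the essential stabilizers are then immediate. From (\ref{comp1}) we have $m_*(-B + (A+B)^c) = m(A^c) = m(-B) + m((A+B)^c)$. Moreover $H(-B + (A+B)^c) = H(A^c) = H(A)$ since similarity classes are translation-invariant, and $H(A) \subseteq H(A+B)$ because $A + t \sim A$ implies $A + B + t \sim A + B$. Because a closed subgroup of a compact abelian group has positive Haar measure if and only if it is open, and $H(A+B)$ is not open by hypothesis, neither is its closed subgroup $H(A)$, so $m(H(A)) = 0$.

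For nonextendibility of $(-B, (A+B)^c)$, I would take measurable $C' \supseteq -B$ and $D' \supseteq (A+B)^c$ with $m_*(C' + D') = m(A^c)$. The sum $C' + D'$ is measurable, contains $-B + (A+B)^c \sim A^c$, and has total inner measure $m(A^c)$, so $C' + D' \sim A^c$, giving $m((C' + D') \cap A) = 0$. For each $c' \in C'$ this yields $m((c' + (A+B)^c) \cap A) = 0$, i.e., $-c' \in \tilde{B} := \{z : A + z \subset_m A + B\}$. The key auxiliary fact is $\tilde{B} \sim B$, which I would prove by applying the Fubini identity $\int_{\tilde{B}} m((A+z) \cap (A+B)^c) \, dz = 0$ together with nonextendibility of $(A, B)$ to rule out $m(\tilde{B}) > m(B)$. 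Hence $C' \subseteq -\tilde{B} \sim -B$, giving $C' \sim -B$, and $D' \sim (A+B)^c$ follows symmetrically. For irreducibility, if $X \subseteq -B$ and $Y \subseteq (A+B)^c$ are measurable with full measure and $m_*(X + Y) < m(A^c)$, then $W := A^c \setminus (X + Y)$ has positive measure, and a direct computation on the null sets $-B \setminus X$ and $(A+B)^c \setminus Y$ shows each $w \in W$ satisfies $m((w + B) \cap (A+B)^c) = 0$. A Fubini argument then yields $m((W + B) \cap (A+B)^c) = 0$, which makes $(A \cup W, B)$ a measurable extension of $(A, B)$ with the same sumset measure, contradicting nonextendibility.

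The main obstacle I anticipate is the passage from the per-element measure-theoretic conditions $m((w + B) \cap (A+B)^c) = 0$ and $m((A+z) \cap (A+B)^c) = 0$ to measure-theoretic control of the sumsets $W + B$ and $A + \tilde{B}$, since an uncountable union of null-measure intersections need not be null. I expect to handle this by first applying Lemma~\ref{H(A+B)} to replace $A$ and $B$ by $\sigma$-compact subsets of the same measure (permissible by irreducibility of $(A, B)$), for which the essential sumset $\{y : m((y - B) \cap W) > 0\}$ agrees with the actual sumset $W + B$ up to a set of Haar measure zero.
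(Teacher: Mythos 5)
Your proof of \eqref{comp1} begins by declaring $E:=A^c\setminus\bigl(-B+(A+B)^c\bigr)$ to be measurable, and this is exactly the point that cannot be assumed: $-B+(A+B)^c$ is an uncountable union of translates and need not be measurable a priori (the paper explicitly warns that the proof ``is complicated somewhat by the hypothetical possibility that these sets are not measurable''). Nonextendibility, as defined, only applies to \emph{measurable} supersets of $A$; without measurability of $E$ the most you can extract (via compact subsets of $E$) is $m_*(E)=0$, and inner measure zero does not make $E$ a null set, so the similarity $-B+(A+B)^c\sim A^c$ does not follow. The paper's proof is organized precisely to dodge this: it assumes \eqref{comp1} fails, notes that then $m_*(-B+(A+B)^c)<m(-B)+m((A+B)^c)$, so Proposition \ref{Ktopinequality} makes the sumset a union of cosets of a compact open subgroup and hence measurable, and only then runs your nonextendibility argument to reach a contradiction. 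That case split (or some substitute for it) is missing from your write-up. A smaller imprecision of the same flavor: your claim that $A+t\sim A$ implies $A+B+t\sim A+B$ is false for general null modifications; it is true here, but only because $(A,B)$ is irreducible (apply irreducibility to $A\cap(A+t)$), and that appeal should be made explicit.

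For nonextendibility and irreducibility of the new pairs, your direct arguments run into the obstacle you yourself flag: passing from ``$m\bigl((w+B)\cap(A+B)^c\bigr)=0$ for every $w\in W$'' (or ``$A+z\subset_m A+B$ for every $z\in\tilde B$'') to control of $m_*\bigl((W+B)\setminus(A+B)\bigr)$ is an uncountable union of null sets, and the repair you propose -- that after a $\sigma$-compact replacement the sumset $W+B$ agrees a.e.\ with the essential sumset $\{y: m((y-B)\cap W)>0\}$ -- is not a valid principle: adjoining to $W$ a compact null set $N$ with $m\bigl((N+B)\setminus(W+B)\bigr)>0$ changes the sumset but not the essential sumset, and nothing in your setup rules this phenomenon out. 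The paper avoids this machinery entirely. Once \eqref{comp2} is known, $H\bigl(-A+(A+B)^c\bigr)=H(B^c)=H(B)$, and irreducibility of $(A,B)$ gives $H(B)\leqslant H(A+B)$, so $m\bigl(H(-A+(A+B)^c)\bigr)=0$; then nonextendibility is a one-liner, since an extendible sur-critical pair has, by Proposition \ref{Ktopinequality} applied to a strictly larger pair with the same sumset measure, a sumset stabilizer of positive Haar measure; and irreducibility follows from Corollary \ref{reduciblecor}, because a reducible pair with null stabilizer would satisfy (QP) with one block of measure zero, which again forces a positive-measure stabilizer and contradicts the hypothesis on $H(A+B)$. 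So while your overall outline (prove \eqref{comp1} first, deduce the rest) matches the paper, the two places where you substitute direct measure-theoretic arguments are genuinely incomplete, and the paper's criticality/stabilizer shortcuts are the missing ingredients.
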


Note that (\ref{comp1}) and (\ref{comp2}) imply $-B+(A+B)^c$ and $-A+(A+B)^c$ are measurable;  the proof is complicated somewhat by the hypothetical possibility that these sets are not measurable.

\smallskip

Lemma \ref{complements}  is an analogue of Lemma 2.4 of \cite{Grynkiewicz}.  As shown in \cite{Grynkiewicz},  the containment $-B+(A+B)^c\subseteq A^c$ holds unconditionally.

\begin{proof}  We first establish (\ref{comp1}).  Write $D$ for $-B+(A+B)^c$.  From the unconditional containment $D\subseteq A^c$, we have $m_*(D)\leq m(A^c)$.  If $m_*(D)=m(A^c)$, then (\ref{comp1}) holds, so we assume
\begin{align}\label{compcontradiction}
m_*(D)< m(A^c)
\end{align}  and derive a contradiction. Now
\begin{align}\label{oneminus}\begin{split}m(-B)+m((A+B)^c)&= m(B)+1-m(A+B)\\
&=m(B)+1-(m(A)+m(B))\\
&=m(A^c),\end{split}
\end{align}
so (\ref{compcontradiction}) means $m_*(-B+(A+B)^c)<m(-B)+m((A+B)^c)$, and Proposition \ref{Ktopinequality} implies $-B+(A+B)^c$ is measurable.

\smallskip

Let $E$ be the set of $y\in A^c$ such that $(y+B)\cap (A+B)^c\neq \varnothing$, so that $E= A^c \cap D$.  Let $S=A^c\setminus E$, so that $(S+B)\cap (A+B)^c=\varnothing$.  Nonextendibility of $(A,B)$ implies $m(S\setminus A)=0$. Since $S\subseteq A^c$, we have $m(S)=0$, which implies $m(E)=m(A^c)$.  From the definition of $E$, we have $E\subseteq -B+(A+B)^c$.  Combining this with the unconditional containment $-B+(A+B)^c\subseteq A^c$, we obtain (\ref{comp1}), which is actually the desired contradiction.

\smallskip

We have proved (\ref{comp1}), and (\ref{comp2}) follows by symmetry.  We finish the proof of the lemma in the following sequence of claims.

\smallskip

\noindent\textit{\textup{1}. $m(H(-A+(A+B)^c))=0$.}  Suppose otherwise to get a contradiction.  Then $m(H(B^c))>0$ by (\ref{comp2}), and consequently $m(H(B))>0$.   Now irreducibility of $(A,B)$ implies $m(H(A+B))>0$, contradicting our assumptions on $(A,B)$.

\smallskip

\noindent\textit{\textup{2}. $(-A,(A+B)^c)$ is a sur-critical pair.}  By a computation similar to (\ref{oneminus}), we get $m(-A)+m((A+B)^c)=m(B^c)$,
so $(-A,(A+B)^c)$ is a sur-critical pair by (\ref{comp2}).

\smallskip

\noindent\textit{\textup{3}. $(-A,(A+B)^c)$ is irreducible.}  Assume otherwise. By Corollary \ref{reduciblecor}, reducibility of $(-A,(A+B)^c)$ yields decompositions $A=A_1\cup A_0$ and $(A+B)^c=C_1\cup C_0$ satisfying (QP), and one of $m(A_0)=0$ or $m(C_0)=0$.  If $m(A_0)=0$, the irreducibility of $(A,B)$ implies $m(H(A+B))>0$, contradicting the hypothesis.  If $m(C_0)=0$ then $m(H((A+B)^c))>0$, and therefore $m(H(A+B))>0$, contradicting the hypothesis.

\smallskip

\noindent\textit{\textup{4}. $(-A,(A+B)^c)$ is nonextendible.}   Assume otherwise.  Then Proposition \ref{Ktopinequality} implies $m(H(-A+(A+B)^c))>0$, contradicting the previous claim to the contrary.

\smallskip

We have shown that $(-A,(A+B)^c)$ is an irreducible nonextendible sur-critical pair satisfying $m(H(-A+(A+B)^c))=0$.  Reversing the roles of $A$ and $B$ yields the corresponding description of $(-B,(A+B)^c)$.   \end{proof}

\begin{lemma}[cf.~\cite{Grynkiewicz}, Lemma 5.1]\label{QP1}  Let $(A,B)$ be a sur-critical pair with $m(A)>0$ and $m(B)>0$, and let $K$ be the subgroup of $G$ generated by $A$ \textup{(}so $K$ is compact and open\textup{)}.  If $A+B$ is aperiodic then $B$ is quasi-periodic with respect to $K$, or $B$ is contained in a coset of $K$.
\end{lemma}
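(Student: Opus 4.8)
The plan is to pass to the quotient $G/K$ via the $K$-coset decomposition of $B$ and show that $B$ fills, up to a null set, all but at most one of the cosets it meets. Since all hypotheses are invariant under translating $A$ (this only translates $A+B$), we may assume $0\in A$, so that $A\subseteq K:=\langle A\rangle$; as $m(A)>0$, $A-A$ contains a neighbourhood of $0$, so $K$ is open, hence compact with $G/K$ finite. Writing $B=\bigsqcup_{i\in I}B_i$ for the $K$-coset decomposition ($B_i=B\cap K_i$, $I$ finite), the inclusion $A\subseteq K$ gives $A+B_i\subseteq K_i$, so $A+B=\bigsqcup_{i\in I}(A+B_i)$, $A+B_i=(A+B)\cap K_i$ is measurable by the standing assumption, and $m(A+B)=\sum_i m(A+B_i)$. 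Since $0\in A$, $B_i\subseteq A+B_i$, so $\delta_i:=m(A+B_i)-m(B_i)\geqslant 0$ and $\sum_{i\in I}\delta_i=m(A+B)-m(B)=m(A)>0$.

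Next I would establish that a coset $K_i$ is \emph{full} (meaning $m(B_i)=m(K)$) if and only if $\delta_i=0$. One direction is immediate, since full $K_i$ forces $A+B_i=K_i$. Conversely, if $\delta_i=0$ then $A+B_i\sim B_i$ (as $B_i\subseteq A+B_i$ with equal measures), so for every $a\in A$ the translate $a+B_i\subseteq A+B_i$ has measure $m(B_i)$, whence $a+B_i\sim B_i$, i.e. $a\in H(B_i)$; since $H(B_i)$ is a subgroup (\S\ref{Hdef}) it then contains $\langle A\rangle=K$, so the Observation of \S\ref{periodicdef} gives $m(B_i)\in\{0,m(K)\}$, and since $m(B_i)=m(A+B_i)\geqslant m(A)>0$ we get $m(B_i)=m(K)$. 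Hence $I^{+}:=\{i:\delta_i>0\}$ is exactly the set of non-full cosets, and $\sum_{i\in I^{+}}\delta_i=m(A)$ with every summand positive.

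The heart of the argument is that $|I^{+}|\leqslant 1$. Fix $i\in I^{+}$. If $\delta_i<m(A)$, i.e. $m(A+B_i)<m(A)+m(B_i)$, then Proposition \ref{Ktopinequality}, applied inside the compact group $K$ to the pair obtained by translating $B_i$ into $K$, produces a compact open subgroup $H_i\leqslant K$ with $A+B_i+H_i=A+B_i$, so $A+B_i$ is a union of $H_i$-cosets inside $K_i$. Suppose $|I^{+}|\geqslant 2$. If every $i\in I^{+}$ had $\delta_i\geqslant m(A)$ then $\sum_{i\in I^{+}}\delta_i\geqslant 2m(A)>m(A)$, impossible; so some $j\in I^{+}$ has $\delta_j<m(A)$, and if additionally some $i\in I^{+}$ had $\delta_i\geqslant m(A)$ we would get $\sum_{i\in I^{+}}\delta_i\geqslant\delta_i+\delta_j>m(A)$, again impossible. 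Hence every $i\in I^{+}$ has $\delta_i<m(A)$; put $H^{*}:=\bigcap_{i\in I^{+}}H_i$, a finite intersection of open subgroups of $K$, so compact and open. Then every $A+B_i$ with $i\in I^{+}$ is a union of $H^{*}$-cosets, every $A+B_i$ with $K_i$ full equals $K_i$ and is a union of $H^{*}$-cosets, and cosets of $K$ disjoint from $B$ meet $A+B$ trivially; since the blocks $A+B_i$ occupy distinct cosets of $K$, $A+B=\bigsqcup_i(A+B_i)$ is a union of $H^{*}$-cosets, i.e. $A+B\sim A+B+H^{*}$ with $H^{*}$ compact open, contradicting aperiodicity of $A+B$. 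Therefore $|I^{+}|=1$, say $I^{+}=\{i_0\}$.

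To finish, set $B_0:=B_{i_0}$ and $B_1:=B\setminus B_0=\bigcup_{i\neq i_0}B_i$. Each $K_i$ with $i\neq i_0$ is full, so $B_1\sim\bigcup_{i\neq i_0}K_i=B_1+K$, while $B_0\subseteq K_{i_0}$ and $(B_1+K)\cap(B_0+K)=\varnothing$. If $I=\{i_0\}$ then $B=B_0$ is contained in a coset of $K$; otherwise $B_1\neq\varnothing$ and $B=B_1\cup B_0$ is a quasi-periodic decomposition of $B$ with respect to $K$. The step I expect to require the most care is ``$|I^{+}|\leqslant 1$'': one must confirm that the within-coset stabilizers $H_i$ delivered by Proposition \ref{Ktopinequality} admit a common compact open refinement and that, precisely because the pieces $A+B_i$ live in distinct $K$-cosets, this refinement stabilizes $A+B$ as a whole; by contrast, the preliminary reductions (arranging $0\in A$, and the observation that nothing above uses irreducibility or nonextendibility of $(A,B)$) are routine.
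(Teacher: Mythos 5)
Your proposal is correct and follows essentially the same route as the paper: decompose $B$ into $K$-cosets, use $\sum_i\bigl(m(A+B_i)-m(B_i)\bigr)=m(A)$, rule out more than one ``deficient'' coset by intersecting the stabilizers from Proposition \ref{Ktopinequality} to contradict aperiodicity of $A+B$, and show the remaining cosets are filled (equivalently $K\leqslant H(B_j)$). The only differences are organizational (you isolate $|I^{+}|\leqslant 1$ and the ``full iff $\delta_i=0$'' equivalence as separate steps, and you make the harmless normalization $0\in A$ explicit), so no further comment is needed.
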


\begin{proof}   Let $B= B_1\cup \dots \cup B_{l}$ be a $K$-coset decomposition of $B$.  Since $A\subseteq K$, $A+B$ is the disjoint union $\bigcup_{i=1}^l A+B_i$.  Then
\begin{align}\label{sumpartition}
m(A+B)=\sum_{i=1}^l m(A+B_i),
\end{align} while
\begin{align}\label{originalpartition}
m(A+B)=m(A)+m(B)=m(A)+\sum_{i=1}^l m(B_i).
\end{align}
Equating the right-hand sides of (\ref{sumpartition}) and (\ref{originalpartition}) we get
 \begin{align}\label{collectsplitsum}
 \sum_{i=1}^l m(A+B_{i})-m(B_{i})=m(A).
 \end{align}
 If $m(A+B_{i})< m(A)+m(B_{i})$ for all $i$, then the group $W:=K\cap \bigcap_{i=1}^l H(A+B_i)$ is open by Proposition \ref{Ktopinequality}, and $A+B$ is periodic with period $W$, contradicting the hypothesis.  Thus there is an $i$ such that $m(A+B_{i})\geq m(A)+m(B_{i})$, and (\ref{collectsplitsum}) implies $m(A+B_{j})=m(B_{j})$ for $j\neq i$. Consequently, $A\subseteq H(B_{j})$ and $m(B_j)>0$ for all $j\neq i$.  This implies $K\leqslant H(B_{j})$ and $B_j\sim B_j+K$ for all $j\neq i$.  We then have that $B$ is quasi-periodic with respect to $K$ if $l>1$, and $B$ is contained in a coset of $K$ if $l=1$. \end{proof}

\begin{corollary}\label{QP1cor}  With the hypotheses of Theorem \textup{\ref{main}}, if $m(H(A+B))=0$ and $A$ is contained in a coset of a compact open subgroup $K_0\leqslant G$, then $B$ has a quasi-periodic decomposition with respect to a compact open subgroup $K\leqslant K_0$, or $B$ is contained in a coset of $K_0$.
\end{corollary}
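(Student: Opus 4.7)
The plan is to reduce immediately to Lemma \ref{QP1} by translating $A$ so that it lies inside the subgroup $K_0$ rather than in a coset. Pick any $a \in A$ and set $A' := A - a$. Since $A$ is contained in a single coset of $K_0$ and $a$ lies in that coset, $A' \subseteq K_0$.

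Next I would check that the hypotheses of Lemma \ref{QP1} transfer from $(A,B)$ to $(A',B)$. Translation invariance of Haar and inner measure gives $m(A')=m(A)>0$ and $m_*(A'+B)=m_*((A+B)-a)=m(A)+m(B)=m(A')+m(B)$, so $(A',B)$ is sur-critical. Moreover $H(A'+B)=H(A+B-a)=H(A+B)$ has measure zero. This already forces $A'+B$ to be aperiodic: any compact open subgroup $W$ with $A'+B\sim A'+B+W$ must lie in $H(A'+B)$, contradicting $m(H(A'+B))=0$ since open subgroups of compact groups have positive measure.

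Let $K$ be the closed subgroup of $G$ generated by $A'$; I would note that $K\leqslant K_0$ (since $K_0$ is a closed subgroup containing $A'$), that $K$ is compact (closed in the compact group $K_0$), and that $K$ is open (the Steinhaus-type argument gives a neighborhood of $0$ inside $A'-A'\subseteq K$, using $m(A')>0$). Lemma \ref{QP1} applied to $(A',B)$ then yields that $B$ is either quasi-periodic with respect to $K$ or contained in a single coset of $K$. In the first case we have the first alternative of the corollary with the subgroup $K\leqslant K_0$. In the second case, every coset of $K$ is contained in a coset of $K_0$, so $B$ is contained in a coset of $K_0$, giving the second alternative.

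There is no real obstacle; the only bookkeeping to watch is the verification that $A'+B$ is aperiodic (which uses $m(H(A+B))=0$ rather than $H(A+B)=\{0\}$) and the assertion that the subgroup generated by $A'$ is compact and open, both of which are routine given the hypotheses.
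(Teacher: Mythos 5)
Your proof is correct and follows essentially the same route as the paper: translate $A$ into $K_0$, observe that $m(H(A+B))=0$ forces $A+B$ to be aperiodic, and apply Lemma \ref{QP1} with $K$ the (compact open) subgroup generated by the translate of $A$, noting $K\leqslant K_0$. The extra verifications you supply (translation invariance, aperiodicity, openness of $K$) are exactly the steps the paper leaves implicit.
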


\begin{proof} By translation we may assume $A\subseteq K_0$.  The hypothesis $m(H(A+B))=0$ implies $A+B$ is aperiodic.  Now apply Lemma \ref{QP1} and let $K$ be the subgroup generated by $A$.  \end{proof}

In the next lemma, we assume that $A=A_1\cup A_0$ has a quasi-periodic decomposition with respect to the maximal period $K$ of $A_1$.  This is no less general than assuming that $A$ has a quasi-periodic decomposition $A=A_1'\cup A_0'$ with respect to some compact open subgroup $L$, since the maximal period $K$ of $A_1'$ must contain $L$, and then $A_0'$ is contained in a coset of $K$, since $A_0'$ is contained in a coset of $L$.

\begin{lemma}[cf.~\cite{Grynkiewicz}, Lemma 5.3]\label{QP3}  Let $(A,B)$ be an irreducible nonextendible sur-critical pair with $m(A)>0$, $m(B)>0$,  and $m(H(A+B))=0$.  Let $A=A_1\cup A_0$ be a quasi-periodic decomposition with $A_1$ periodic with maximal period $K$.  Then $B=B_1\cup B_0$, where $B_1\sim B_1+K$, $B_0$ is contained in a coset of $K$, and
\begin{enumerate}
\item[(i)]  $A_0+B_0+K$ is a unique expression element of $A+B+K$ in $G/K$,

    \smallskip

\item[(ii)]  $m(A_0+B_0)=m(A_0)+m(B_0)$,
\end{enumerate}
so $(A,B)$ satisfies conclusion \textup{(QP)} of Theorem \textup{\ref{main}}.

\end{lemma}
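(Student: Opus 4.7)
The plan is to decompose $B$ along its $K$-cosets and show that exactly one coset is ``partial,'' combining the hypothesis $m(H(A+B))=0$ with the maximality of $K$ as the period of $A_1$. Translate so that $A_0\subseteq K$ with $0\in A_0$, let $B=\bigcup_{j=1}^{l}B_j$ be the $K$-coset decomposition with $b_j\in B_j$, and let $\phi\colon G\to G/K$ denote the quotient with $\bar X:=\phi(X)$. Since $A_1\sim A_1+K$,
\[
A+B\;\sim\;\phi^{-1}(\bar A_1+\bar B)\,\cup\,\bigcup_{j=1}^{l}(A_0+B_j),
\]
with each $A_0+B_j\subseteq b_j+K$. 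Let $J:=\{j:\bar b_j\notin\bar A_1+\bar B\}$. The cosets $b_j+K$ with $j\in J$ are disjoint from $A_1+B$ and carry the ``new'' parts of $A+B$; $J$ is nonempty, for otherwise $A+B$ would be essentially $K$-periodic and $m(H(A+B))\geq m(K)>0$.

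For each $j\notin J$, nonextendibility forces $B_j\sim b_j+K$: setting $B':=B\cup(b_j+K)$, the additions $A_1+(b_j+K)\subseteq A_1+B$ and $A_0+(b_j+K)=b_j+K\subseteq A_1+B$ (using $j\notin J$ for the second containment) give $A+B'\sim A+B$, hence $B'\sim B$. Comparing $m(A+B)$ computed from the two decompositions with $m(A)+m(B)$ yields
\[
\sum_{j\in J}\bigl(m(A_0+B_j)-m(B_j)\bigr)\;=\;m(A_0)+(l-|J|-s)\,m(K),
\]
where $s:=|\bar A_1+\bar B|-|\bar A_1|$ and $l:=|\bar B|$. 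Each summand is nonnegative and strictly less than $m(K)$, while $0<m(A_0)<m(K)$ (the upper bound being forced by $m(H(A+B))=0$, else $A$ would be $K$-periodic), so integrality gives $l-|J|-s\geq 0$.

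The crux is to show $|J_{\mathrm{partial}}|=1$, where $J_{\mathrm{partial}}:=\{j\in J:m(B_j)<m(K)\}$. A coset-by-coset calculation gives
\[
K\cap H(A+B)\;=\;\bigcap_{j\in J_{\mathrm{partial}}}H(A_0+B_j),
\]
an intersection of measure zero. Were every $H(A_0+B_j)$ open in $K$ (equivalently, of positive measure), this finite intersection would also be open and hence of positive measure---a contradiction---so some $j^{*}\in J_{\mathrm{partial}}$ has $m(H(A_0+B_{j^{*}}))=0$, and Proposition \ref{Ktopinequality} then gives $m(A_0+B_{j^{*}})\geq m(A_0)+m(B_{j^{*}})$. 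To upgrade this to $|J_{\mathrm{partial}}|=1$, I would apply Kneser's theorem to the discrete pair $(\bar A,\bar B)$ in $G/K$: the maximality of $K$ as period of $A_1$ eliminates any nontrivial stabilizer $\bar H$ of $\bar A+\bar B$, since a nontrivial $\bar H$ would force $\bar A_1+\bar H\neq\bar A_1$ and then a term-count in Kneser's formula $|\bar A+\bar B|=|\bar A+\bar H|+|\bar B+\bar H|-|\bar H|$, combined with $l-|J|-s\geq 0$, would yield either $l-|J|-s<0$ or $A_1+\phi^{-1}(\bar H)\sim A_1$ (contradicting maximality). With $\bar H=\{0\}$, Kneser gives $l-|J|-s=0$, so the summed identity collapses to $m(A_0)$; since the $j^{*}$ term alone contributes $\geq m(A_0)$, all other summands vanish, and any remaining $j\in J_{\mathrm{partial}}\setminus\{j^{*}\}$ with vanishing summand would place $A_0\subset_m H(B_j-b_j)$ inside a proper open subgroup of $K$, from which a further argument ultimately forces $|J_{\mathrm{partial}}|=1$.

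With $J_{\mathrm{partial}}=\{j_0\}$, set $B_0:=B_{j_0}$ and $B_1:=B\setminus B_0$. Then $B_1$ is a disjoint union of essentially full $K$-cosets so $B_1\sim B_1+K$, while $B_0\subseteq b_{j_0}+K$. Property (ii), $m(A_0+B_0)=m(A_0)+m(B_0)$, is the sur-critical identity extracted above with only one summand remaining. For (i), if $\bar a_0+\bar b_{j_0}=\bar a+\bar b$ with $\bar a\in\bar A$, $\bar b\in\bar B$, then $\bar a\notin\bar A_1$ (else the sum would lie in $\bar A_1+\bar B$, contradicting $j_0\in J$), forcing $\bar a=\bar a_0$ and then $\bar b=\bar b_{j_0}$. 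The main obstacle is the structural step establishing $|J_{\mathrm{partial}}|=1$: it is precisely here that the Kneser-theoretic bookkeeping in $G/K$ must be threaded through the maximality hypothesis on $K$, adapting the discrete Lemma 5.3 of \cite{Grynkiewicz} to the locally compact setting.
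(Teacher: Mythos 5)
Your setup (the $K$-coset bookkeeping, the identity $\sum_{j\in J}\bigl(m(A_0+B_j)-m(B_j)\bigr)=m(A_0)+(l-|J|-s)m(K)$, and the stabilizer-intersection argument producing one $j^*$ with $m(A_0+B_{j^*})\geq m(A_0)+m(B_{j^*})$) is sound and runs parallel to part of the paper's proof. But the proposal has a genuine gap exactly where you flag it: the step forcing $|J_{\mathrm{partial}}|=1$. Your plan needs two unproven inputs. First, the application of Kneser's theorem in $G/K$ requires that the stabilizer $\bar H$ of $\bar A+\bar B$ be trivial; the hypothesis $m(H(A+B))=0$ does \emph{not} give this, because $\bar A+\bar B+\bar H=\bar A+\bar B$ only says $A+B+\phi^{-1}(\bar H)\supseteq A+B$, and the partial cosets prevent these from being similar, so no contradiction with $H(A+B)$ being null is immediate; your proposed dichotomy (either $\bar A_1+\bar H=\bar A_1$, contradicting maximality of $K$, or a ``term-count in Kneser's formula'') is only sketched, and the second horn is not carried out. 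Second, even granting $l-|J|-s=0$ and that all summands other than the $j^*$-term vanish, a vanishing summand does not remove the coset: a nonempty $B_j$ with $m(B_j)=0$ (or with $0<m(B_j)<m(K)$ and $m(A_0+B_j)=m(B_j)$) is compatible with everything you have proved so far, yet its presence destroys the conclusion $B_1\sim B_1+K$. You defer this to ``a further argument,'' but this is the heart of the lemma, not a routine cleanup.

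For comparison, the paper avoids the counting in $G/K$ entirely. It first proves (Claim 1) that $m(A_1+B)<m(A_1)+m(B)$ and $H(A_1+B)=K$ --- this is where the maximality of $K$ is actually used --- so that Lemma \ref{overspill} applies to the pair $(A_1,B)$ and gives $m(B_0)+m(A_1+B)\geq m(A_1)+m(B)$ for the distinguished piece $B_0$ (produced, as in your argument, by intersecting stabilizers over the cosets of $D:=\{b\in B: A_0+b\subseteq (A_1+B)^c\}$). Chaining this with $m(A_0+B_0)\geq m(A_0)+m(B_0)$ and $m(A+B)=m(A)+m(B)$ forces equality throughout, hence $A+B\sim (A_0+B_0)\cup(A_1+B)$; then $B_1+A\subseteq A_1+B+K\sim A_1+B$ and nonextendibility yield $B_1\sim B_1+K$ in one stroke, which simultaneously disposes of any other partial or null-but-nonempty cosets. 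If you want to salvage your route, you must either prove $\bar H=\{0\}$ and then separately eliminate the leftover cosets (most naturally by a nonextendibility argument like the paper's), or simply replace the Kneser bookkeeping by the overspill-plus-equality-chain mechanism above.
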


\begin{proof}  Observe that
\begin{align}\label{A1smaller}
m(A_1+B)<m(A+B)
\end{align}  since $m(H(A_1+B))\geq m(K)>0$, while $m(H(A+B))=0$.  Inequality (\ref{A1smaller}) implies
\begin{align}\label{1step}
\begin{split}
m(A_1+B)&\leq m(A+B)-m(A_0)\\
&=m(A)+m(B)-m(A_0)\\
&=m(A_1)+m(B),
\end{split}
\end{align}
where the first line results from $A_1+B\sim A_1+B+K$, while $A_0$ is contained in a coset of $K$, the second line is due to $m(A+B)=m(A)+m(B)$, and the third results from $A=A_1\cup A_0$ being a partition of $A$.  We also have
\begin{align}\label{inbetween}
0<m(A_0)<m(K),
\end{align} where $0<m(A_0)$ is a consequence of irreducibility of $(A,B)$ and (\ref{A1smaller}), while $m(A_0)<m(K)$ follows from the hypothesis $m(H(A+B))=0$.

\begin{claim1} Strict inequality holds in (\ref{1step}) and $H(A_1+B)=K$.
\end{claim1}

\noindent\textit{Proof of Claim \textup{1}.}  Assume $m(A_1+B)=m(A+B)-m(A_0)$ to get a contradiction. Then $m(A+B)=m(A_1+B)+m(A_0)$, so
\begin{align}\label{A0plusb0}
A+B\sim (A_1+B)\cup (A_0+b_0).
\end{align}
 for some $b_0\in B$.   Then (\ref{inbetween}) and the similarities $A_1\sim A_1+K$, $A_1+B\sim A_1+B+K$ imply that
\begin{align*}
C:=\{b\in B: (A+b)\setminus (A_1+B) \neq \varnothing\}
 \end{align*}
 is contained in a coset of $H(A_0)$.  If $m(H(A_0))=0$, then $m(C)=0$ and $(A,B)$ is reducible, contrary to the hypothesis.  If $m(H(A_0))>0$, (\ref{A0plusb0}) implies $K\cap H(A_0)\subseteq H(A+B)$, so $m(H(A+B))>0$, again contradicting our hypothesis.

\smallskip

Strict inequality in (\ref{1step}) implies $m(A_1+B)<m(A_1)+m(B)$. We now show  $H(A_1+B)=K$. Write $H$ for $H(A_1+B).$  Applying Proposition \ref{Ktopinequality} to $(A_1,B)$,  we have $A_1+B=A_1+B+H$.  Then $A_1+B+H\subseteq A+B$, so the nonextendibility of $(A,B)$ implies
\begin{align}\label{A1plusH}
A_1+H\subset_m A.
\end{align}
To see that $H=K$, first note that $A_1\sim A_1+K$, so $A_1+B+K\sim A_1+B$, and we find $K\leqslant H$ by the definition of $H$.  Now $A_1+H$ is disjoint from $A_0$, since otherwise $A_1+H=A+H$, and then $A\sim A+H$, contradicting $m(H(A+B))=0$.  Thus (\ref{A1plusH}) implies $A\setminus A_0 \sim A_1+H$.  The maximality of $K$ then implies $H\leqslant K$.  \hfill $\square$

\smallskip

Let $D:=\{b\in B: A_0+b\subseteq (A_1+B)^c\}$, and let $D= \bigcup_{i=1}^l D_i$ be a $K$-coset decomposition of $D$.

\begin{claim2}  $m(A_0+D_i)\geq m(A_0)+m(D_i)$ for some $i$.
\end{claim2}

\noindent\textit{Proof of Claim \textup{2}.}  We have $A+B= (A_1+B) \cup \bigcup_{i=1}^l A_0+D_i$.  Assuming Claim 2 fails, Proposition \ref{Ktopinequality} implies $W:=\bigcap_{i=1}^l H(A_0+D_i)$ is open.  Then  the containment $(K\cap W)\leqslant H(A+B)$ contradicts $m(H(A+B))=0$.  \hfill $\square$

\smallskip

Let $B_0$ be one of the $D_i$ satisfying $m(A_0+D_i)\geq m(A_0)+m(D_i)$, and let  $B_1=B\setminus B_0$. We aim to show that $B_1\sim B_1+K$.  Using  Claim 1, we apply Lemma \ref{overspill} to $(A_1,B)$ and find
\begin{align}\label{overspilled}
m(B_0)+m(A_1+B)\geq m(A_1)+m(B).
\end{align}
Then
\begin{align}\label{crit}
\begin{aligned} m(A+B)&\geq m(A_0+B_0) + m(A_1+B)  \\
&\geq m(A_0)+m(B_0)+m(A_1+B) \qquad && \text{(by Claim 2)}\\
&\geq m(A_0)+m(A_1)+m(B) \qquad && \text{(by (\ref{overspilled}))}\\
&= m(A)+m(B).
\end{aligned}
\end{align}
Since $m(A+B)=m(A)+m(B)$, the inequalities in (\ref{crit}) are actually equalities, and
\begin{align}\label{djunion}
A+B\sim (A_0+B_0)\cup (A_1+B),
\end{align}
 which is a disjoint union.  The hypothesis $m(H(A+B))=0$ and (\ref{djunion}) imply  $A_0+B_0+K$ is a unique expression element of $A+B+K$ in $G/K$.  Now   (\ref{djunion}) implies $B_1+A\subseteq A_1+B+K\sim A_1+B$.  The nonextendibility of $(A,B)$ then implies $B_1+K\sim B_1$.  This concludes the proof of (i). Part (ii) of the lemma follows from the first and second lines  of (\ref{crit}), as the inequalities are not strict.  \end{proof}

\begin{lemma}\label{QPsumtosummand}  Let $(A,B)$ be an irreducible nonextendible sur-critical pair for $G$ such that $m(A)>0$, $m(B)>0$, and $m(H(A+B))=0$.  If $A+B\sim D$, where $D$ is quasi-periodic with respect to a compact open subgroup $K\leqslant G$, then $(A,B)$ satisfies conclusion \textup{(QP)} of Theorem \textup{\ref{main}}. \end{lemma}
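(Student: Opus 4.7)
The plan is to derive a quasi-periodic decomposition of $A$ and then invoke Lemma~\ref{QP3}. Write $D = D_1 \cup D_0$ for the quasi-periodic decomposition, with $D_1 \sim D_1 + K$, $D_0$ contained in a coset $K_0$ of $K$, and $(D_1 + K) \cap (D_0 + K) = \varnothing$; let $\pi \colon G \to G/K$ denote the quotient to the finite group $G/K$. First, $m(D_0) > 0$: otherwise $D \sim D_1$ would be $K$-periodic, forcing $K \leqslant H(A+B)$ and contradicting $m(H(A+B)) = 0$; similarly $m(D_1) > 0$. Using irreducibility of $(A,B)$, I would pass to subsets of $A$ and $B$ of the same measure so that every nonempty piece $A \cap K_a$, $B \cap K_b$ has positive measure. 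Using the fact that sumsets of positive-measure sets in a compact abelian group have positive measure in the ambient coset, together with $A + B \sim D$, I would conclude $\pi(A) + \pi(B) = \pi(D_1) \cup \{\pi(K_0)\}$.

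Next, partition $\pi(A) = \bar A_1 \cup \bar A_0$ according to whether $m(A \cap K_a) = m(K)$, and similarly $\pi(B) = \bar B_1 \cup \bar B_0$. A Steinhaus-type argument shows that for $K_a \in \bar A_1$ the sumset $(A \cap K_a) + (B \cap K_b)$ fills $K_a + K_b$, which must then lie in $\pi(D_1)$ rather than equal $\pi(K_0)$ (since $m(D_0) < m(K)$). Nonextendibility forces, for each $K_a \in \bar A_0$, some $K_b \in \pi(B)$ with $K_a + K_b = \pi(K_0)$: otherwise replacing $A \cap K_a$ with the entire coset $K_a$ leaves $A + B$ essentially unchanged (the added contribution lands only in already full cosets of $\pi(D_1)$), producing a nontrivial extension of $A$. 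The main obstacle is proving $|\bar A_0| = 1$, equivalently that $\pi(K_0)$ is a unique expression element of $\pi(A) + \pi(B)$. I would argue by contradiction: if $|\bar A_0| \geqslant 2$, pick distinct $K_a^1, K_a^2 \in \bar A_0$ with the forced $K_b^i := \pi(K_0) - K_a^i \in \bar B_0$, translate the sub-pairs $(A \cap K_a^i, B \cap K_b^i)$ to a common copy of $K$ via coset representatives with $k_a^i + k_b^i = k_0 \in K_0$, obtaining $C_i, D_i \subseteq K$ whose sumsets lie in a translate of $S_0 := (A+B) \cap K_0$ of measure $< m(K)$. Combining the identity $m(A) + m(B) = m(A+B)$ with Proposition~\ref{Ktopinequality} applied inside $K$ to the union of these sub-pairs should produce a nontrivial stabilizer of $S_0$ within $K$; since $K$ itself stabilizes $D_1$, this stabilizer lifts to a positive-measure subgroup of $H(A+B)$, contradicting $m(H(A+B)) = 0$. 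The measure accounting here is the principal technical challenge.

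Once $|\bar A_0| = 1$, say $\bar A_0 = \{K_a^*\}$, set $A_0 = A \cap K_a^*$ and $A_1 = A \setminus A_0$; the characterization of $\bar A_1$ gives $A_1 \sim A_1 + K$, so $A = A_1 \cup A_0$ is a quasi-periodic decomposition of $A$ with respect to $K$. If $A_1 = \varnothing$, then $A \subseteq K_a^*$ and Corollary~\ref{QP1cor} provides a quasi-periodic decomposition of $B$ (the alternative that $B$ lies in a single coset is ruled out by $|\pi(A+B)| = r + 1 \geqslant 2$); applying Lemma~\ref{QP3} to the pair $(B,A)$ and invoking the symmetry of conclusion~(QP) yields~(QP) for $(A,B)$. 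Otherwise, let $K'$ be the maximal period of $A_1$, so $K \leqslant K'$; the decomposition $A = A_1 \cup A_0$ remains valid with respect to $K'$, because any $K$-coset of $A_1$ sharing a $K'$-coset with $A_0$ would force $A_1$ to fill that $K'$-coset, placing $K_a^*$ in $\bar A_1$, a contradiction. Applying Lemma~\ref{QP3} with this decomposition (in which $A_1$ has maximal period $K'$) concludes $(A,B)$ satisfies~(QP).
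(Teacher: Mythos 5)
Your plan has the right shape at the ends (reduce to a quasi-periodic decomposition of $A$ or $B$ and feed it to Lemma~\ref{QP3}/Corollary~\ref{QP1cor}), but the core of the argument is missing. The entire difficulty of the lemma is concentrated in your claim that $|\bar A_0|=1$, i.e.\ that all the non-full $K$-cosets of $A$ coincide; your treatment of it is ``Proposition~\ref{Ktopinequality} applied inside $K$ \ldots\ should produce a nontrivial stabilizer of $S_0$,'' and you yourself flag the measure accounting as unresolved. A naive count does not suffice: with $r\geq 2$ exceptional cosets one only gets $\sum_i\bigl(m(A\cap K_a^i)+m(B\cap K_b^i)\bigr)\geq (2r-2)m(K)+m(D_0)$ from a Kneser-type bound in $G/K$, which is compatible with each $m(A\cap K_a^i)<m(K)$; to reach a contradiction one must exploit that the several sub-sumsets land in the \emph{same} set $(A+B)\cap K_0$ of measure $<m(K)$ and run a stabilizer analysis exactly of the kind carried out in the paper's Lemma~\ref{transformcase1} (Claim 3 and the estimates around it) and in Lemma~\ref{QP3}. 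So the proposal, as written, asserts rather than proves the decisive step.

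There is a second gap around your pruning step. You pass to subsets $A'\subseteq A$, $B'\subseteq B$ so that every occupied $K$-coset has positive measure, but (a) nonextendibility is not inherited by subsets, so the ``replace $A\cap K_a$ by the full coset $K_a$'' argument is not justified for the pruned pair, while for the original pair it fails on measure-zero cosets: if $m(A\cap K_a)=0$ or $m(B\cap K_b)=0$, the coset $K_a+K_b$ need not lie in $\pi(D_1)\cup\{\pi(K_0)\}$, so adjoining $K_a$ may genuinely enlarge $m_*(A+B)$; and (b) conclusion (QP) must hold for the original $A$, $B$, including any null ``dust'' spread over several cosets, so a transfer back is required (the paper does this via Lemma~\ref{QPtoQP}, which moreover needs $m(A_0')>0$ and $m(B_0')>0$). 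For comparison, the paper's proof avoids all of this by dualizing: since $A+B\sim D$ is quasi-periodic, its complement $C=(A+B)^c$ is essentially quasi-periodic or lies in a coset of $K$; Lemma~\ref{complements} makes $(-B,C)$ again an irreducible nonextendible sur-critical pair with trivial essential stabilizer, and then Corollary~\ref{QP1cor} or Lemma~\ref{QP3} applied to that pair hands you a quasi-periodic decomposition of $B$ (or $A$), after which Lemma~\ref{QP3} applied to $(A,B)$ finishes. That route inherits the hard unique-expression accounting from Lemma~\ref{QP3} instead of redoing it, and it automatically confines the null dust to the single exceptional coset. If you want to keep your direct approach, you must supply the $|\bar A_0|=1$ argument in full and a correct mechanism for handling measure-zero cosets and returning to the original pair.
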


\begin{proof}  Write $C$ for $(A+B)^c$. The assumption that $A+B\sim D$, where $D$ is quasi-periodic with quasi-period $K$, implies $C\sim C'$ where $C'\subseteq C$ is quasi-periodic with respect to $K$, or $C'$ is contained in a coset of $K$.  Lemma \ref{complements} implies $(-B,C)$ is an irreducible nonextendible sur-critical pair with $m(H(-B+C))=0$, so $(-B,C')$ is such a pair with $m(H(-B+C'))=0$.  If $C'$ is contained in a coset of $K$, then Corollary \ref{QP1cor} implies $-B$, and therefore $B$, is quasi-periodic with respect to $K$, or is contained in a coset of $K$.  If $B$ is quasi-periodic with respect to $K$, then Lemma \ref{QP3} implies $(A,B)$ satisfies (QP).  If $B$ is contained in a coset of $K$, then Corollary \ref{QP1cor} implies $A$ is contained in a coset of $K$ or $A$ is quasi-periodic with respect to $K$.  Since $A+B\sim D$, which is quasi-periodic with respect to $K$, the sets $A$ and $B$ cannot both be contained in a coset of $K$, so $A$ is quasi-periodic with respect to $K$.  Again we apply Lemma \ref{QP3} and find $(A,B)$ satisfies (QP).

\smallskip

If $C'$ is not contained in a coset of $K$, then $C'$ is quasi-periodic with respect to $K$, and Lemma \ref{QP3} implies $(-B,C')$ satisfies conclusion (QP) of Theorem \ref{main}.  Then $-B$ is contained in a coset of $K$ or $-B$ is quasi-periodic with respect to $K$, and as above we conclude that $(A,B)$ satisfies (QP).  \end{proof}

\subsection{The $e$-transform}\label{etsection}  For $A$, $B\subseteq G$ and $e\in G$, form the pair $(A_e,B_e)$, where
\begin{align*}
A_e:= A\cup (B+e),\  B_e:=(A-e)\cap B.
\end{align*}
This is the Dyson $e$-transform, whose properties are well-documented.  In particular,
\begin{align}\label{et1}
A_e+B_e&\subseteq A+B, \\
\label{et2}m(A_e)+m(B_e)&=m(A)+m(B),
\end{align}
whenever $B_e$ is nonempty.  See \cite{Kneser56}, \cite{Nathansoninverse}, or \cite{TaoVu} for further exposition.

\smallskip

The proof of Proposition \ref{Kconnected} in \cite{Kneser56} relies on a sequence of pairs $(A^{(n)}, B^{(n)})$ successively derived by $e$-transform, where $\lim_{n\to \infty} m(B^{(n)})=0$ and $m(B^{(n)})>0$ for all $n$.  The next lemma facilitates the same construction for some pairs $(A,B)$ where the ambient group is disconnected.

\begin{lemma}\label{transformestimate}
If $A$, $B\subseteq G$ have positive Haar measure, $A+B$ is measurable, $m(A)+m(B)\leq 1$, and $A+B$ is aperiodic, then there is an $e\in G$ such that $B_e:=(A-e)\cap B\neq \varnothing$ and $m(B_e)\leq (1-m(B))m(B)$.
\end{lemma}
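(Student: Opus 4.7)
The plan is to bound $m_*(A-B)$ from below using aperiodicity, then use a first moment/averaging argument on $e\mapsto m(B_e)$.

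First I would show that the aperiodicity of $A+B$ forces the strict inequality $m(A)+m(B)<1$. If $m_*(A+B)<m(A)+m(B)$, Proposition \ref{Ktopinequality} would produce a compact open period of $A+B$, contradicting aperiodicity; so $m_*(A+B)\geq m(A)+m(B)$, and in particular $m(A+B)\geq m(A)+m(B)$ (with $A+B$ measurable by hypothesis). If equality $m(A)+m(B)=1$ held, this would give $m(A+B)=1$, so $A+B\sim G$, making $A+B$ periodic with period $G$, again a contradiction. Hence $m(A)+m(B)<1$.

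Next I would establish $m_*(A-B)\geq m(A)+m(B)$. Applying Corollary \ref{differencetosum} to the pair $(A,-B)$: if $m_*(A-B)=m_*(A+(-B))<m(A)+m(-B)$, then $A-(-B)=A+B$ would be periodic with period $H(A-B)$ (which is compact and open by Proposition \ref{Ktopinequality}), contradicting our hypothesis. Now the elementary inequality
\begin{align*}
(m(A)+m(B))(1-m(B))-m(A)=m(B)\bigl(1-m(A)-m(B)\bigr)>0
\end{align*}
(using $m(B)>0$ and the strict bound from the previous paragraph) rearranges to
\begin{align*}
m_*(A-B)\geq m(A)+m(B)>\frac{m(A)}{1-m(B)}.
\end{align*}

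Finally, do the averaging. The function $f(e):=m(B_e)=\int 1_A(x+e)1_B(x)\,dm(x)$ is measurable in $e$, and by Fubini and translation invariance of $m$,
\begin{align*}
\int_G f(e)\,dm(e)=m(A)m(B).
\end{align*}
Setting $\alpha:=(1-m(B))m(B)>0$, Markov's inequality applied to the measurable set $T:=\{e:f(e)>\alpha\}$ gives $m(T)\leq m(A)m(B)/\alpha=m(A)/(1-m(B))$. Since $m_*(A-B)>m(A)/(1-m(B))\geq m(T)$, the set $A-B$ cannot be contained in the measurable set $T$, so some $e\in (A-B)\setminus T$ exists. For such $e$, write $e=a-b$ with $a\in A$, $b\in B$; then $b=a-e\in(A-e)\cap B=B_e$, so $B_e\neq\varnothing$, and $m(B_e)\leq\alpha=(1-m(B))m(B)$, as required.

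The main obstacle is step two, the lower bound $m_*(A-B)\geq m(A)+m(B)$: this is the place where aperiodicity must be converted from a statement about $A+B$ into one about the difference set, and Corollary \ref{differencetosum} is precisely the bridge. Once that bound is in hand, the averaging is routine, and the key numerical inequality depending on $m(A)+m(B)<1$ is just arithmetic.
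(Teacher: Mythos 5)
Your proof is correct and follows essentially the same route as the paper: both hinge on the lower bound $m_*(A-B)\geq m(A)+m(B)$ obtained from Corollary \ref{differencetosum} applied to $(A,-B)$, followed by a first-moment argument on $f(e)=m((A-e)\cap B)$, whose integral over $G$ is $m(A)m(B)$. The only cosmetic difference is in the averaging: the paper averages $f$ over $\{f>0\}$ (splitting off the easy case where some $e\in A-B$ already has $m(B_e)=0$) and so works with $m(A)+m(B)\leq 1$ directly, whereas your Markov-inequality version needs the strict bound $m(A)+m(B)<1$, which you correctly extract from aperiodicity beforehand.
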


In particular, the conclusion holds when $m(H(A+B))=0$. The hypothesis ``$A+B$ is aperiodic" cannot be omitted; the conclusion fails when $B$ is a coset of a compact open subgroup $K\leqslant G$ and $A$ is a union of cosets of $K$.

\begin{proof}  Write $f(z)=m((A-z)\cap B)$, so that $f:G\to [0,1]$ is a continuous function and $\int f\, dm=m(A)m(B)$ (\cite{Kneser56}, Lemma 1). Note that $(A-z)\cap B$ is nonempty exactly when $z\in A-B$. Consider $S:=\{z:f(z)>0\}$, which is contained in $A-B$.  If $S\neq A-B$, then there is an $e\in A-B$ with $f(e)=0$, meaning $B_e\neq \varnothing$ and $m(B_e)=0$, so we are done.  If $S=A-B$, consider the average
\begin{align}\label{Saverage}
\frac{1}{m(S)}\int_S f dm =\frac{1}{m(A-B)} m(A)m(B).
\end{align}
 We estimate $m(A-B)$.  If $m(A-B)<m(A)+m(B)$, then  Corollary \ref{differencetosum} implies $A+B$ is periodic, contradicting the hypotheses.   Now $m(A-B)\geq m(A)+m(B)$, and equation (\ref{Saverage}) implies
 \begin{align*}
\frac{1}{m(S)}\int_S f dm&\leq \frac{m(A)}{m(A)+m(B)}m(B)\\
& = \Bigl(1-\frac{m(B)}{m(A)+m(B)}\Bigr)m(B)\\
&\leq (1-m(B))m(B),
 \end{align*}
where we used $m(A)+m(B)\leq 1$ in the last line.  So there is a $e\in G$ such that $0<f(e)<(1-m(B))m(B)$.  Now $0<m(B_e)\leq (1-m(B))m(B)$ by the definition of $f$.   \end{proof}

\subsection{The key lemma}     The proof of next lemma follows part of  the  argument of \cite{Grynkiewicz}, \S 6, Subcase 1.

\begin{lemma}\label{transformcase1}
Suppose $(A,B)$ is an irreducible, nonextendible, sur-critical pair having $m(A)>0$, $m(B)>0$, $m(H(A+B))=0$, and there exists $e\in G$ such that $m_*(A_e+B_e)<m(A)+m(B)$.  Then $(A,B)$ satisfies conclusion \textup{(QP)} of Theorem \textup{\ref{main}} with $K=H(A_e+B_e)$.
\end{lemma}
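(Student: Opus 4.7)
The plan is to apply Proposition~\ref{Ktopinequality} to the Dyson transform $(A_e,B_e)$, show that $A+B$ is (modulo measure zero) quasi-periodic with respect to $K:=H(A_e+B_e)$, and then invoke Lemma~\ref{QPsumtosummand}. By (\ref{et2}) we have $m(A_e)+m(B_e)=m(A)+m(B)$, so the hypothesis $m_*(A_e+B_e)<m(A)+m(B)$ makes $(A_e,B_e)$ a critical pair. Proposition~\ref{Ktopinequality} then yields that $K$ is compact and open, $A_e+B_e=A_e+B_e+K$ (so $A_e+B_e$ is measurable), and $m(A_e+B_e)=m(A_e+K)+m(B_e+K)-m(K)$. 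Combining with (\ref{et1}) and $m(A+B)=m(A)+m(B)$, the set $E:=(A+B)\setminus(A_e+B_e)$ is measurable, disjoint from $A_e+B_e$, and satisfies
$$
m(E)=m(K)-\bigl[m(A_e+K)-m(A_e)\bigr]-\bigl[m(B_e+K)-m(B_e)\bigr]\in(0,m(K)].
$$

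The technical heart will be to show that $E$ lies in a single coset of $K$ modulo measure zero. Working in the discrete quotient $G/K$ with images $\tilde A,\tilde B,\tilde A_e,\tilde B_e$, and using $H(A_e+B_e)=K$, the pair $(\tilde A_e,\tilde B_e)$ is a Cauchy-Davenport critical pair in $G/K$ satisfying $|\tilde A_e+\tilde B_e|=|\tilde A_e|+|\tilde B_e|-1$ with trivial stabilizer, and $\tilde A\subseteq\tilde A_e$, $\tilde B_e\subseteq\tilde B$. Since $E$ is nonempty, $|\tilde A+\tilde B|\ge|\tilde A_e+\tilde B_e|+1$; the goal is to prove equality. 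Suppose for contradiction $E$ meets at least two distinct cosets and choose witnesses $c_1,c_2\in E$ in distinct cosets; writing $c_i=a_i+b_i$ with $a_i\in A$, $b_i\in B$, the condition $c_i\notin A_e+B_e$ forces $b_i\notin B_e$. Lemma~\ref{Hholes} applied to the critical pair $(A_e,B_e)$ ensures that every coset meeting $A_e$ (resp.\ $B_e$) is densely filled, and Corollary~\ref{consequence} applied to $(A_e,B_e)$ says that any $z\notin A_e+K$ fails $z+B_e\subseteq A_e+B_e$, and symmetrically for $B_e+K$, yielding rigid structural constraints relating $c_1$ and $c_2$. Combining these with the nonextendibility of $(A,B)$ (which forbids enlarging $A$ or $B$ so as to fill additional cosets of $A_e+K$ or $B_e+K$ without strictly increasing the sumset) and the hypothesis $m(H(A+B))=0$ (which rules out $A+B$ having essential stabilizer properly containing $K$), one derives a contradiction, parallel to the argument in \cite{Grynkiewicz}, \S6, Subcase~1.

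Once $E$ is confined to a single coset $c_0+K$, the disjoint decomposition $A+B=(A_e+B_e)\sqcup E$ exhibits $A+B$ as quasi-periodic with respect to $K$: the periodic part is $A_e+B_e$, and the exceptional part $E$ lies in the single coset $c_0+K$, disjoint from $A_e+B_e$. Lemma~\ref{QPsumtosummand} applied to $(A,B)$ then yields conclusion~(QP) of Theorem~\ref{main} with quasi-period $K=H(A_e+B_e)$, as required. The principal obstacle is the uniqueness step: standard Cauchy-Davenport and Kneser inequalities in $G/K$ supply only lower bounds on $|\tilde A+\tilde B|$, so the argument must orchestrate irreducibility, nonextendibility, and the aperiodicity of $A+B$ to obtain the necessary upper bound $|\tilde A+\tilde B|\le|\tilde A_e+\tilde B_e|+1$.
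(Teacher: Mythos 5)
Your overall frame is the same as the paper's: apply Proposition \ref{Ktopinequality} to the critical pair $(A_e,B_e)$ to get the compact open group $K=H(A_e+B_e)$ with $A_e+B_e=A_e+B_e+K$, show that the part of $A+B$ lying outside $A_e+B_e$ is (up to null sets) confined to a single coset of $K$, and then invoke Lemma \ref{QPsumtosummand}. The measure bookkeeping for $E=(A+B)\setminus(A_e+B_e)$ and the reduction to ``$E$ occupies essentially one coset'' are fine. But the proof stops exactly where the lemma's content begins: the claim that two exceptional cosets lead to a contradiction is only asserted, with a pointer to ``rigid structural constraints'' from Lemma \ref{Hholes} and Corollary \ref{consequence} and an appeal to Grynkiewicz \S 6, Subcase 1 — and you concede as much in your closing sentence. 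Those two quoted results by themselves do not yield the needed upper bound $|\tilde A+\tilde B|\le|\tilde A_e+\tilde B_e|+1$; lower-bound tools cannot, and the argument genuinely requires new structure that your sketch never builds.

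Concretely, the paper's proof (after normalizing $e=0$, so $A_e=A\cup B$, $B_e=A\cap B$) first uses nonextendibility of $(A,B)$ to show $(A\cap B)+K\sim A\cap B$, i.e.\ the transformed small set is essentially $K$-periodic; it then partitions $A=A_0\cup A_1\cup A_2$ and $B=B_0\cup B_1\cup B_2$ according to interaction with $(A\cap B)+K$ and shows $A_0+K=B_0+K$. If some $b\in B_2$ (or $a\in A_2$) has $A+b\not\subseteq A_e+B_e$, a short argument using Lemma \ref{overspill} applied to $(A_e,B_e)$ already gives $A+B\sim(A_e+B_e)\cup(a_0+B_0)$ and one is done; otherwise nonextendibility makes $A_1,A_2,B_1,B_2$ essentially $K$-periodic, and the exceptional cosets of $A+B$ are exactly sums $A_{0,i}+B_{0,j}$ of the $K$-coset pieces of $A_0$ and $B_0$. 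The contradiction when two such exceptional cosets exist is then obtained by applying Proposition \ref{Ktopinequality} \emph{again} to the sub-pairs $(A_{0,i},B_{0,j})$ (an intersection of their stabilizers with $K$ would be a positive-measure period of $A+B$, violating $m(H(A+B))=0$), combined with the per-coset inequality from Lemma \ref{overspill} for $(A_e,B_e)$ and a comparison of the maxima $M,M'$ of the piece measures, which forces all four pieces to have equal measure and then yields the strict inequality $m(A+B)>m(A)+m(B)$. None of these steps — the periodicity of $A\cap B$ via nonextendibility, the three-way partition, the reapplication of Kneser's theorem to the cross sums, or the measure-comparison — appears in your proposal, so the key step of the lemma is left unproved.
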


\begin{proof}  Without loss of generality assume $e=0$,\footnote{Write $C=A-e$, $D=B$, so applying the $e$-transform to $(C,D)$ with $e=0$ yields \\ $(C',D')=((A-e)\cup B, (A-e)\cap B)=(A_e-e,B_e)$.} so $A_e=A\cup B$, and $B_e=A\cap B$.  Since $m(A_e)+m(B_e)=m(A)+m(B)$, we apply Proposition \ref{Ktopinequality} to $(A_e,B_e)$ and find
\begin{align}\label{AeBeH}
A_e+B_e=A_e+B_e+H,
\end{align} while $H=H(A_e+B_e)$ is compact and open.
\begin{claim1}\label{C1} $(A\cap B)+H\sim A\cap B$.
\end{claim1}

\noindent\textit{Proof of Claim \textup{1}.}  If $b\in A\cap B$, then $A+b\subseteq A_e+B_e=A_e+B_e+H$, so $A+b+H\subseteq A+B$.  Nonextendibility of $(A,B)$ then implies $b+H\subset_m B$.  Likewise, if $a\in A\cap B$, then $a+B\subseteq A_e+B_e$, so $a+H\subset_m A$.   The claim follows.  \hfill $\square$

\smallskip

Partition $A$ as $A=A_0\cup A_1\cup A_2$, where
\begin{align*}
A_0&:=(A\cap(B+H))\setminus [(A\cap B)+H]\\
 A_1&:=A\cap [(A\cap B)+H]\\
A_2&:=A\setminus (A_0\cup A_1).
\end{align*}
Partition $B$ similarly, as $B_0:=(B\cap (A+H))\setminus (A\cap B)$, $B_1:=B\cap [(A\cap B)+H]$, and $B_2:=B\setminus (B_0\cup B_1)$.

\begin{claim2}\label{C2} $A_0+H= B_0+H$.
\end{claim2}

\noindent\textit{Proof of Claim \textup{2}.}  The definitions of $A_0$ and $B_0$ imply $A+H$ contains $B_0$, while $A_1+H$ and $A_2+H$ are disjoint from $B_0$. Thus $B_0\subseteq A_0+H$.   Similarly, $A_0\subseteq B_0+H$. \hfill $\square$

\smallskip

We now consider three cases.

\subsubsection*{Case \textup{1}. There exists $b\in B_2$ such that $(A+b)\not \subseteq (A_e+B_e)$.}    In this case (\ref{AeBeH}) implies that  there is a $b\in B_2$ and an $a_0\in A$ such that $a_0+( (b+H)\cap B)$ is disjoint from $A_e+B_e$.  Fix such $b$, and set $B_0=(b+H)\cap B$.
Note that $B_0=A_e\cap (b+H)$, by the definition of $B_2$.  Then Lemma \ref{overspill}, applied to $(A_e,B_e)$, implies
\begin{align}\label{beenot}
m(A_e+B_e)+m(B_0)\geq m(A_e)+m(B_e)=m(A+B).
\end{align}
The containment $A_e+B_e\subseteq A+B$,  (\ref{beenot}), and  the definition of $B_0$ imply
\begin{align}\label{almost}
A+B\sim(A_e+B_e)\cup (a_0+B_0).
\end{align}
Since $A_e+B_e$ is $H$-periodic, the set $(A_e+B_e)\cup (a_0+B_0)$ is quasi-periodic.  By (\ref{almost}) and  Lemma \ref{QPsumtosummand}, $(A,B)$ satisfies conclusion (QP) of Theorem \ref{main} with $K=H$.  We are done with Case 1.

\subsubsection*{Case \textup{2}. There exists $a\in A_2$ such that $(a+B)\not \subseteq (A_e+B_e)$.}  We argue as in Case 1, reversing the roles of $A$ and $B$.

\subsubsection*{Case \textup{3.}  For all $a\in A_2$ and $b\in B_2$, $A+b\subseteq A_e+B_e$ and $a+B\subseteq A_e+B_e$}  In this case, nonextendibility of  $(A,B)$ and the containment $A_e+B_e+H\subseteq A+B$ imply $b+H\subset_m B$ for all $b\in B_2$, and $a+H\subset_m A$ for all $a\in A_2$.

\smallskip

By Claims 1 and 2 and the definitions of the $A_i$, we now have $A=A_0\cup A_1\cup A_2$, where $A_1+H\sim A_1$, $A_2+H\sim A_2$, and $A_0\cap B=\varnothing$.  Similarly, $B=B_0\cup B_1\cup B_2$, where $B_1+H=B_1$, $B_2+H=B_2$, and $B_0\cap A=\varnothing$.  Also, $A_0+H=B_0+H$. Figure \ref{fig:QPfigure} depicts this scenario: the vertical rectangles are cosets of $H$, while contrary to the conclusion of the lemma, $A_0$ and $B_0$ are shown as occupying two cosets of $H$.  Our immediate goal is to show that $A+B$ has a quasi-periodic decomposition with quasi-period $H$. Let
\begin{align*}
A_0=\bigcup_{i=1}^l A_{0,i},\  B_0=\bigcup_{i=1}^l B_{0,i}
\end{align*}
be $H$-coset decompositions of $A_0$ and $B_0$, with $A_{0,i}+H=B_{0,i}+H=:H_i$.    Then $A_e\cap H_i=(A_{0,i}\cup B_{0,i})$, which is a disjoint union, for each $i$.  Lemma \ref{overspill}, applied to $(A_e,B_e)$, implies
\begin{align}\label{e-coset}
m(A_{0,i})+m(B_{0,i}) + m(A_e+B_e) \geq m(A_e)+m(B_e) =m(A+B),
\end{align}
for all $i$.

\begin{figure}
\includegraphics[scale=.4]{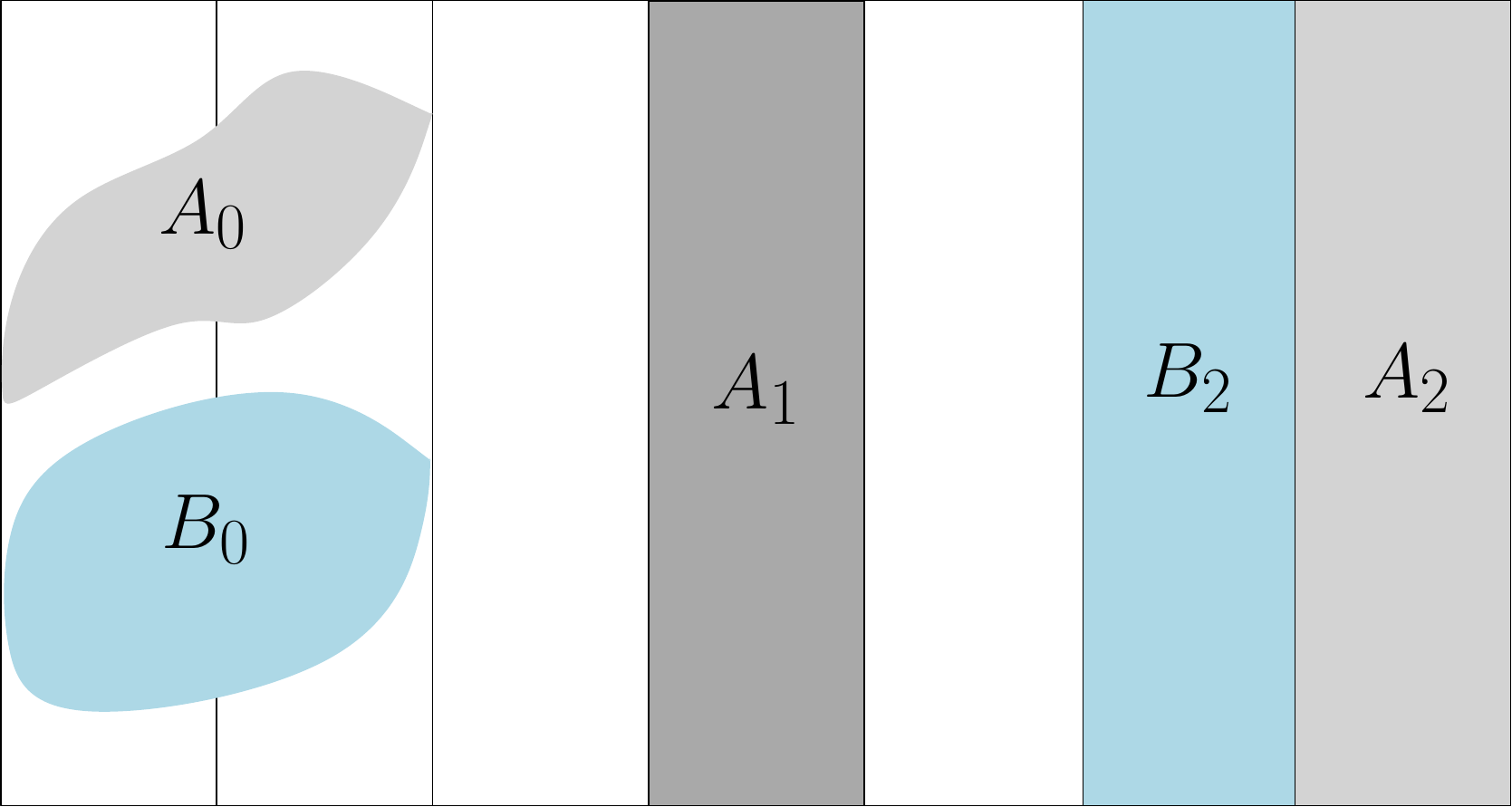}
\caption{}
\label{fig:QPfigure}
\end{figure}

\begin{claim3}  There is a pair $(i,j)$ such that $H_i+H_j$ is disjoint from $A_e+B_e$, and $m(A_{0,j}+B_{0,i})\geq m(A_{0,j})+m(B_{0,i})$ or $m(A_{0,i}+B_{0,j})\geq m(A_{0,i})+m(B_{0,j})$.
\end{claim3}

\noindent\textit{Proof of Claim \textup{3}.}  Assuming otherwise, Proposition \ref{Ktopinequality} implies the group $W:=\bigcap_{(i,j)} H(A_{0,j}+B_{0,i})\cap H(A_{0,i}+B_{0,j})$ is open, where the intersection is over all pairs $(i,j)$ such that $H_i+H_j$ is disjoint from $A_e+B_e$. Then the containment $(W\cap H)\leqslant H(A+B)$ contradicts $m(H(A+B))=0$.    \hfill $\square$

\smallskip

Assume, to get a contradiction, that $A+B$ is not quasi-periodic with quasi-period $H$. Let $(i,j)$ be a pair satisfying the conclusion of Claim 3, and let $(i',j')$ be another pair such that the sets
\begin{align*}
(A_{0,i}+B_{0,j})\cup (A_{0,j}+B_{0,i}) \text{ and } (A_{0,i'}+B_{0,j'})\cup (A_{0,j'}+B_{0,i'})
\end{align*}
 occupy distinct cosets of $H$ and are disjoint from $A_e+B_e$.  Writing
\begin{align*}
M&:= \max\{m(A_{0,i}),m(A_{0,j}),m(B_{0,i}),m(B_{0,j})\}\\
M'&:=\max\{m(A_{0,i'}),m(A_{0,j'}),m(B_{0,i'}),m(B_{0,j'})\},
\end{align*}
and $S$ for $\{i,i',j,j'\}$, we estimate $m(A+B)$ from below by
\begin{align}\label{e-estimate}
m(A+B)\geq M+M'+m(A_e+B_e).
\end{align}
Combining (\ref{e-estimate}) with (\ref{e-coset}) yields
\begin{align}\label{comparison}
M+M'\leq m(A_{0,s})+m(B_{0,s}) \text{ for all } s\in S,
\end{align}
 so in particular, $M+M' \leq \min_{s\in S} \{m(A_{0,s})+m(B_{0,s})\}$.  Assume, without loss of generality, that $M\leq M'$.  Since
\begin{align*}
\min_{s\in S} \{m(A_{0,s})+m(B_{0,s})\} \leq \min_{s\in S} \min\{m(A_{0,s}),m(B_{0,s})\}+M',
\end{align*} (\ref{comparison}) implies
\begin{align}\label{minsmin}
M=\min_{s\in S} \min\{m(A_{0,s}),m(B_{0,s})\}.
\end{align}
From the definition of $M$, (\ref{minsmin}) implies
\begin{align}\label{mmmm}
\begin{aligned}
M&=m(A_{0,i})=m(A_{0,j})=m(B_{0,i})=m(B_{0,j}).
\end{aligned}
\end{align}
Combining (\ref{mmmm}) with (\ref{comparison}) we conclude $M=M'$, and
\begin{align}\label{allsame}
m(A_{0,s})=m(A_{0,t})=m(B_{0,s})=m(B_{0,t}) \text{ for all } s,t\in S.
\end{align}
Claim 3 and (\ref{allsame}) imply $m((A_{0,i}+B_{0,j})\cup (A_{0,j}+B_{0,i}))\geq m(A_{0,i})+m(B_{0,i})$.  Again estimating $m(A+B)$, we have
\begin{align}\label{ijij}
\begin{aligned}
m(A+B)&\geq m(A_e+B_e)+m((A_{0,i}+B_{0,j})\cup (A_{0,j}+B_{0,i}))\\
&\qquad +m((A_{0,i'}+B_{0,j'})\cup (A_{0,j'}+B_{0,i'}))\\
& > m(A_e+B_e)+m(A_{0,i})+m(B_{0,i})\\
&\geq m(A)+m(B),
\end{aligned}
\end{align}
where the last line follows from (\ref{e-coset}). The strict inequality in (\ref{ijij}) is the desired contradiction.  This concludes the analysis of Case 3 and the proof of the lemma. \end{proof}

\subsection{Intersections and measure}

\begin{lemma}\label{intersections}  If $A$, $B\subseteq G$ have $m(A)>0$ and $m(B)>0$, there are sets $A'\subseteq A$, $B'\subseteq B$ with $m(A')=m(A)$ and $m(B')=m(B)$, such that $m((A'-t)\cap B')>0$ whenever $(A'-t)\cap B'$ is nonempty.
\end{lemma}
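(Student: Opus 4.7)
The plan is to define $A' \subseteq A$ and $B' \subseteq B$ as the sets of Lebesgue density-$1$ points of $A$ and $B$, and then verify the property via inclusion-exclusion on small neighborhoods. Because Lebesgue differentiation is cleanest in the metrizable setting, the first step is to reduce to the case that $G$ is compact metrizable abelian.

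For the reduction, since $1_A, 1_B \in L^2(G)$ have only countably many nonzero Fourier coefficients, let $\hat G_0 \leq \hat G$ be the countable subgroup of $\hat G$ generated by their joint Fourier support, and set $H := \hat G_0^\perp$. By Pontryagin duality $\widehat{G/H} = \hat G_0$, so $G/H$ is compact metrizable abelian. The $L^2$-invariance of $1_A$ and $1_B$ under translation by $H$ forces $H \leq H(A) \cap H(B)$, and a minor variant of Lemma \ref{H(S)} (allowing any closed subgroup $H \leq H(S)$) produces measurable $A_0, B_0 \subseteq G/H$ with $A \sim \pi^{-1}(A_0)$ and $B \sim \pi^{-1}(B_0)$, where $\pi: G \to G/H$ is the quotient. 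Granting the lemma for $(A_0, B_0)$ in $G/H$ produces full-measure $A'_0 \subseteq A_0$ and $B'_0 \subseteq B_0$; then $A' := A \cap \pi^{-1}(A'_0)$ and $B' := B \cap \pi^{-1}(B'_0)$ are full-measure subsets of $A, B$, and a routine null-set computation transfers the property from $(A'_0, B'_0)$ to $(A', B')$.

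Assume now that $G$ is compact metrizable abelian, fix a translation-invariant metric on $G$, and let $(U_n)$ be a decreasing neighborhood basis at $0$ consisting of open sets with $m(U_n) > 0$ and $\bigcap_n U_n = \{0\}$. The Lebesgue differentiation theorem yields a full-measure $A' \subseteq A$ with $\lim_n m(A \cap (a + U_n))/m(U_n) = 1$ for every $a \in A'$, and analogously $B'$. To verify the property, suppose $(A' - t) \cap B'$ is nonempty and pick $a \in A'$, $b \in B'$ with $t = a - b$. For $n$ sufficiently large, both $m(A \cap (a + U_n)) > \tfrac{2}{3} m(U_n)$ and $m(B \cap (b + U_n)) > \tfrac{2}{3} m(U_n)$, and translation invariance gives $m((B + t) \cap (a + U_n)) = m(B \cap (b + U_n))$. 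Inclusion-exclusion inside $a + U_n$ then yields
\[
m\bigl(A \cap (B + t) \cap (a + U_n)\bigr) \ \geq\ m\bigl(A \cap (a + U_n)\bigr) + m\bigl((B + t) \cap (a + U_n)\bigr) - m(U_n) \ >\ \tfrac{1}{3} m(U_n) \ >\ 0,
\]
and since $A \sim A'$ and $B \sim B'$, the sets $(A - t) \cap B$ and $(A' - t) \cap B'$ differ only by a null set, so $m\bigl((A' - t) \cap B'\bigr) = m\bigl((A - t) \cap B\bigr) > 0$.

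The main obstacle is supplying the Lebesgue differentiation theorem in an arbitrary compact metrizable abelian group, where the Haar measure need not be doubling with respect to the chosen metric; the conclusion nevertheless follows by standard means (for instance, via martingale convergence along a decreasing filtration by closed subgroups, or by appealing to the structure theorem to reduce to a countable product of circles and finite cyclic groups where classical Lebesgue differentiation applies coordinate-wise).
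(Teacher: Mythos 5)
Your construction of $A'$, $B'$ as density points and the inclusion--exclusion verification are exactly the paper's argument; the gap is in how you supply the density statement, which you yourself flag as ``the main obstacle'' and then dispatch with suggestions that do not work. Martingale convergence along a decreasing sequence of closed subgroups $K_n\downarrow\{0\}$ gives $m_{K_n}(A-x)\to 1$ for a.e.\ $x\in A$, but unless the $K_n$ are open (i.e.\ $G$ is profinite) they are null for $m_G$, so positivity of $m_{K_n}\bigl(((A-t)\cap B)-x\bigr)$ on a single coset says nothing about $m_G\bigl((A-t)\cap B\bigr)$, which is what the lemma requires; your inclusion--exclusion step needs neighborhoods of positive $m_G$-measure. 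The fallback via the structure theorem is also flawed: a compact metrizable abelian group need not be a countable product of circles and finite cyclic groups (solenoids are not), and even in a genuine infinite product the classical density theorem does not simply apply ``coordinate-wise''---infinite products are precisely where the non-doubling/differentiation-basis difficulties you mention become real obstructions, not formalities. So as written the engine of the proof is missing.

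The fix is to notice that you never need a true Lebesgue differentiation theorem along a neighborhood basis: it suffices to have \emph{one} sequence of neighborhoods $U_n$ of $0$ (not necessarily shrinking to $\{0\}$) along which $m(A\cap(U_n+a))/m(U_n)\to 1$ and $m(B\cap(U_n+b))/m(U_n)\to 1$ for almost every $a\in A$, $b\in B$. This is Theorem A of \cite{Mueller65} (proved in \cite{Mueller62}); alternatively, by Proposition 2.42 of \cite{FollandACAHA} the averages $m(A\cap(V_n+z))/m(V_n)$ and $m(B\cap(V_n+z))/m(V_n)$ converge to $1_A$ and $1_B$ in $L^1(m)$ for a suitable sequence of neighborhoods $V_n$, and passing to a subsequence gives almost-everywhere convergence simultaneously for both sets. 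With that input your verification goes through verbatim, and the Fourier-theoretic reduction to the metrizable case (which is correct, including the variant of Lemma \ref{H(S)} for a closed subgroup of $H(S)$) becomes unnecessary.
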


\begin{proof}  By Theorem A of \cite{Mueller65}, there is a sequence of neighborhoods $U_n$ of the identity $0\in G$ such that for $m$-almost all $a\in A$ and $b\in B$,
\begin{align}\label{PoD}
\lim_{n\to \infty} \frac{m(A\cap (U_n+a) )}{m(U_n)}=1 \text{ and }  \lim_{n\to \infty} \frac{m(B\cap (U_n+b) )}{m(U_n)} =1.
\end{align}
Let $A'\subseteq A$ and  $B'\subseteq B$ be the sets of points in $A$ and $B$, respectively, satisfying (\ref{PoD}), so that $m(A')=m(A)$ and $m(B')=m(B)$.  If  $t\in G$ and $(A'-t)\cap B'$ is nonempty, assume without loss of generality that $0\in (A'-t)\cap B'$.  Now (\ref{PoD}) implies that for $n$ sufficiently large, \begin{align*}
m((A'-t)\cap U_n)>m(U_n)/2 \text{ and } m(B'\cap U_n)>m(U_n)/2,
\end{align*}
so $m((A'-t)\cap B'\cap U_n)>0$.  Thus $m((A'-t)\cap B')>0$. \end{proof}

\begin{remark}  Theorem A of \cite{Mueller65} is proved in \cite{Mueller62}.  We outline a way to find neighborhoods $U_n$ satisfying (\ref{PoD}):  by Proposition 2.42 of \cite{FollandACAHA} and the ensuing remarks, there is a sequence of neighborhoods $V_n$ of the identity $0\in G$ such that the functions
\begin{align*}
f_n(z):= m(A\cap (V_n+z))/m(V_n),\  g_n(z):=m(B\cap (V_n+z))/m(V_n)
\end{align*}
converge in $L^1(m)$ to the functions $1_A$ and $1_B$, respectively.  One may then choose a subsequence $\{U_n\}_{n\in \mathbb N}$ of $\{V_n\}_{n\in \mathbb N}$ so that the corresponding subsequences of $f_n$ and $g_n$ converge $m$-almost everywhere.  These $U_n$ will satisfy (\ref{PoD}).  \hfill $\blacksquare$ \end{remark}

\section{The main argument}\label{main argument}

\subsection{The sequence of $e$-transforms}\label{transforms}  As in \cite{Kneser56}, we construct a sequence of pairs by successively applying the $e$-transform (\S \ref{etsection}).

\begin{lemma}\label{esequence}
Let $(A,B)$ be an irreducible, nonextendible pair satisfying the hypotheses of Theorem \textup{\ref{main}} such that $m(H(A+B))=0$.  Then at least one of the following holds.
\begin{enumerate}
\item[(i)]  There is a sequence of pairs $(A^{(n)}, B^{(n)})$, $n=0,1,2,\dots$ such that $A^{(0)}=A$, $B^{(0)}=B$, and for all $n\geq 1$,

\smallskip

    \begin{enumerate}
    \item[(i.1)]  the pair $(A^{(n)},B^{(n)})$ is derived from $(A^{(n-1)},B^{(n-1)})$ by $e$-transform,

    \item[(i.2)]  $A^{(n)}+B^{(n)}\sim A+B$, and

    \item[(i.3)] $0<m(B^{(n)})\leq (1-m(B^{(n-1)}))m(B^{(n-1)})$.
    \end{enumerate}

    \smallskip

\item[(ii)]  $(A,B)$ satisfies conclusion \textup{(QP)} of Theorem \textup{\ref{main}}.
\end{enumerate}
\end{lemma}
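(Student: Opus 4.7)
The plan is to construct the sequence $(A^{(n)}, B^{(n)})$ by induction on $n$, using Lemma \ref{transformestimate} at each step to produce the next pair; if the inductive construction ever fails to preserve condition (i.2) or (i.3), we extract conclusion (QP) for $(A,B)$ using Lemmas \ref{transformcase1} and \ref{QPsumtosummand}.

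Set $(A^{(0)}, B^{(0)}) := (A, B)$. Suppose $(A^{(k)}, B^{(k)})$ has been defined for $k \le n-1$ satisfying (i.1)--(i.3). By iterating (\ref{et2}), $m(A^{(n-1)}) + m(B^{(n-1)}) = m(A) + m(B) \le 1$, and by (i.2) the set $A^{(n-1)} + B^{(n-1)}$ is similar to $A+B$. Invoking Lemma \ref{specialtogeneral}, we may assume $A+B$ is measurable, so $A^{(n-1)}+B^{(n-1)}$ is measurable; moreover $H(A^{(n-1)}+B^{(n-1)}) = H(A+B)$ has Haar measure zero, so $A^{(n-1)}+B^{(n-1)}$ is aperiodic. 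Apply Lemma \ref{transformestimate} to $(A^{(n-1)}, B^{(n-1)})$ to produce $e_n \in G$ with $B' := (A^{(n-1)} - e_n) \cap B^{(n-1)} \ne \varnothing$ and $m(B') \le (1 - m(B^{(n-1)}))\, m(B^{(n-1)})$. Set $A^{(n)} := A^{(n-1)} \cup (B^{(n-1)} + e_n)$ and $B^{(n)} := B'$; then (i.1) holds, and (\ref{et1})--(\ref{et2}) give $A^{(n)}+B^{(n)} \subseteq A^{(n-1)}+B^{(n-1)}$ together with $m(A^{(n)}) + m(B^{(n)}) = m(A) + m(B)$.

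The favourable case is $m(B^{(n)}) > 0$ and $m_*(A^{(n)}+B^{(n)}) = m(A)+m(B)$. Here $A^{(n)}+B^{(n)} \subseteq A^{(n-1)}+B^{(n-1)}$, both sets have inner measure equal to $m(A+B)$, and $A+B$ is measurable, so $A^{(n)}+B^{(n)} \sim A+B$; this gives (i.2) and (i.3), and the induction proceeds. The bound from Lemma \ref{transformestimate} yields (i.3)'s upper estimate automatically.

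Otherwise one of two failure modes occurs. If $m_*(A^{(n)}+B^{(n)}) < m(A^{(n)}) + m(B^{(n)})$, then the $e_n$-transform from $(A^{(n-1)}, B^{(n-1)})$ to $(A^{(n)}, B^{(n)})$ is a strictly decreasing transform of the kind considered in Lemma \ref{transformcase1}. We need to verify that $(A^{(n-1)}, B^{(n-1)})$ itself is irreducible, nonextendible, and sur-critical with $m(H) = 0$; sur-criticality and $m(H(A^{(n-1)}+B^{(n-1)})) = 0$ are immediate from (i.2) and (\ref{et2}), while irreducibility and nonextendibility must be pulled back from $(A, B)$ through the chain of $e$-transforms (any defect in $(A^{(n-1)}, B^{(n-1)})$ would yield subsets of $A$ and $B$ of unchanged measure with a strictly smaller or extendible sumset, contradicting the hypothesis on $(A,B)$). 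Granting this, Lemma \ref{transformcase1} gives that $(A^{(n-1)}, B^{(n-1)})$ satisfies conclusion (QP); in particular $A^{(n-1)} + B^{(n-1)}$ is quasi-periodic with some quasi-period $K \leqslant G$. Since $A+B \sim A^{(n-1)} + B^{(n-1)}$, Lemma \ref{QPsumtosummand} applied to $(A,B)$ yields conclusion (QP) for $(A,B)$, which is alternative (ii). The remaining failure mode is $m(B^{(n)}) = 0$ with $B^{(n)} \ne \varnothing$, which forces $A+B \sim A^{(n)} + b$ for any $b \in B^{(n)}$; we handle this by applying the previous argument at level $n-1$, noting that the set equality $A+B \sim A^{(n)} + b$ restricts the structure of $A+B$ enough to produce, via Lemma \ref{QPsumtosummand}, a quasi-periodic decomposition for $(A,B)$.

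The main technical obstacle is propagating irreducibility and nonextendibility down the chain of $e$-transforms. These properties are not transparently preserved: the $e$-transform redistributes measure between $A$ and $B$ while only preserving the sum, and a subset of $A^{(n-1)}$ of full measure does not obviously correspond to a subset of $A$ of full measure. Nevertheless, because each step satisfies $A^{(k)} + B^{(k)} \sim A+B$, any reduction or extension at level $n-1$ can be realised at level $0$, and this bookkeeping is the technical heart of the case analysis.
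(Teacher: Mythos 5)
Your overall skeleton matches the paper's: build the chain inductively via Lemma \ref{transformestimate}, and if an $e$-transform makes the sumset measure drop, feed Lemma \ref{transformcase1} into Lemma \ref{QPsumtosummand} to transfer (QP) back to $(A,B)$. However, there are two genuine gaps. First, Lemma \ref{transformestimate} only guarantees $B_e\neq\varnothing$, not $m(B_e)>0$, while (i.3) requires strict positivity. Your treatment of the failure mode $m(B^{(n)})=0$ --- asserting that $A+B\sim A^{(n)}+b$ ``restricts the structure of $A+B$ enough'' to invoke Lemma \ref{QPsumtosummand} --- is not an argument: that lemma needs $A+B$ to be similar to a \emph{quasi-periodic} set, and similarity to a translate of $A^{(n)}=A^{(n-1)}\cup(B^{(n-1)}+e_n)$ produces no such decomposition. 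The paper forecloses this case before it arises: prior to each transform it applies Lemma \ref{intersections} to replace the current pair by full-measure subsets $C$, $D$ for which every nonempty intersection $(C-e)\cap D$ has positive measure, so the new $B^{(k+1)}$ automatically satisfies $m(B^{(k+1)})>0$. Your proposal never uses Lemma \ref{intersections}, and without it the chain can die at a nonempty null set.

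Second, to apply Lemma \ref{transformcase1} you need $(A^{(n-1)},B^{(n-1)})$ to be irreducible and nonextendible, and you only assert that these ``can be pulled back from $(A,B)$ through the chain of $e$-transforms,'' while conceding that this bookkeeping is the technical heart. Nonextendibility is in fact immediate and needs no pulling back: by (i.2), $m(H(A^{(n-1)}+B^{(n-1)}))=m(H(A+B))=0$, and an extendible pair would force a compact open (hence positive-measure) stabilizer by Proposition \ref{Ktopinequality}. Irreducibility, by contrast, does \emph{not} pass down the chain in any obvious way: the $e$-transform mixes the two sets ($A_e=A\cup(B+e)$, $B_e=(A-e)\cap B$), so a full-measure reduction of $(A^{(n-1)},B^{(n-1)})$ with smaller sumset does not translate into one for $(A,B)$. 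The paper does not attempt to prove this propagation; instead, when $(A^{(k)},B^{(k)})$ is reducible it invokes Corollary \ref{reduciblecor} (applicable because $m(H(A^{(k)}+B^{(k)}))=0$) to conclude that $(A^{(k)},B^{(k)})$ satisfies (QP), hence $A+B\sim A^{(k)}+B^{(k)}$ is similar to a quasi-periodic set, and Lemma \ref{QPsumtosummand} then yields alternative (ii). Incorporating these two devices --- Lemma \ref{intersections} before each transform, and the reducible-case branch via Corollary \ref{reduciblecor} rather than propagation of irreducibility --- is what makes the induction close.
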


\begin{proof}  Suppose $(A^{(n)},B^{(n)})$ satisfies the description in (i) for $n=0,\dots, k$.  We will show that there is a pair $(A^{(k+1)},B^{(k+1)})$ satisfying (i.1)-(i.3) with $n=k+1$, or  (ii) holds. Note that $(A^{(k)},B^{(k)})$ is not extendible, as  (i.2) implies $H(A^{(k)}+B^{(k)})=\{0\}$.

\smallskip

If $(A^{(k)},B^{(k)})$ is reducible, Corollary \ref{reduciblecor} says that $(A^{(k)},B^{(k)})$ satisfies (QP) of Theorem \ref{main}.  Since $A+B\sim A^{(k)}+B^{(k)}$, Lemma \ref{QPsumtosummand} implies $(A,B)$ satisfies conclusion (QP) of Theorem \ref{main}, and we have (ii).

\smallskip

If $(A^{(k)}, B^{(k)})$ is irreducible, apply Lemma \ref{intersections} to find $C\subseteq A^{(k)}$ and $D\subseteq B^{(k)}$ such that $m(C)+m(D)=m(A^{(k)})+m(B^{(k)})$ and $(C-e)\cap D\neq \varnothing$ implies $m((C-e)\cap D)>0$.  By Lemma \ref{transformestimate}, there is an $e\in G$ such that
\begin{align}\label{Destimate}
0<m(D_e)\leq (1-m(D))m(D)
 \end{align}
Take $B^{(k+1)}=(B^{(k)})_e$ and  $A^{(k+1)}=(B^{(k)})_e$.  If
\begin{align*}
m(A^{(k+1)}+B^{(k+1)})=m(A)+m(B)
\end{align*} we have (i.1)-(i.3) with $n=k+1$, since $B^{(k+1)}\sim D_e$.  Otherwise,
\begin{align*}
m(A^{(k+1)}+B^{(k+1)})<m(A^{(k+1)})+m(B^{(k+1)})
\end{align*} and Lemma \ref{transformcase1} implies $A^{(k)}+B^{(k)}$ ($\sim A+B$) is quasi-periodic.   Now  Lemma \ref{QPsumtosummand} implies $(A,B)$ satisfies conclusion (QP) of Theorem \ref{main}.  We conclude (ii).

\smallskip

If the construction of $(A^{(n)},B^{(n)})$ yields conclusion (ii) for some $n$, we are done.  Otherwise, we have (i).   \end{proof}

We now prove a special case of Theorem \ref{main}.

\begin{proposition}\label{specialprop}  Theorem \textup{\ref{main}} holds under the additional assumptions that $(A,B)$ is irreducible and nonextendible, $A+B$ is measurable, and $H(A+B)=\{0\}$.
\end{proposition}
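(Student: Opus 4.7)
My approach is to apply Lemma \ref{esequence} to $(A,B)$ and then extract the homomorphism $\chi$ of conclusion (K) from the resulting sequence of $e$-transforms. First note that conclusions (P) and (E) of Theorem \ref{main} are both incompatible with $H(A+B)=\{0\}$: each would force $H(A+B)$ to be a compact open subgroup by Proposition \ref{Ktopinequality}. So the goal is to establish (K) or (QP). Lemma \ref{esequence} gives two alternatives: either $(A,B)$ already satisfies (QP) (and we are done), or there is a sequence of irreducible nonextendible sur-critical pairs $(A^{(n)},B^{(n)})$ with $A^{(n)}+B^{(n)}\sim A+B$ and $m(B^{(n)})\leq (1-m(B^{(n-1)}))m(B^{(n-1)})$. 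The recurrence then forces $m(B^{(n)})\to 0$.

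Using the iterated Dyson inclusion $A^{(n)}+B^{(n)}\subseteq A+B$ from (\ref{et1}) together with the equality of measures, for $b,b'\in B^{(n)}$ both $A^{(n)}+b$ and $A^{(n)}+b'$ lie (mod null) in $A+B$ with common measure $m(A+B)-m(B^{(n)})$, giving $m(A^{(n)}\triangle(A^{(n)}+(b-b')))\leq 2m(B^{(n)})$. A triangle inequality comparing $A^{(n)}$ with $(A+B)-b'$ (which differ by a set of measure $m(B^{(n)})$) then yields $f(t)\leq 4m(B^{(n)})$ for every $t\in B^{(n)}-B^{(n)}$, where $f(t):=m((A+B)\triangle((A+B)+t))$. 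The function $f$ is continuous on $G$ with $f^{-1}(0)=H(A+B)=\{0\}$; since $G$ is compact, the nested closed sublevel sets $F_\varepsilon:=f^{-1}([0,\varepsilon])$ shrink to $\{0\}$ in Hausdorff diameter as $\varepsilon\downarrow 0$. Hence $B^{(n)}-B^{(n)}\subseteq F_{4m(B^{(n)})}$ eventually fits in every prescribed neighborhood of $0$, and the Hausdorff diameter of $B^{(n)}$ itself tends to zero.

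Let $K_0\leqslant G$ be the compact open subgroup generated by $(A+B)-(A+B)$ (open because $A-A$ contains a neighborhood of $0$); then $K_0$ is the smallest subgroup such that $A$, $B$, and $A+B$ each lie in a single coset of $K_0$. If $K_0$ is totally disconnected it has a neighborhood basis at $0$ of proper compact open subgroups, so for large $n$ the set $B^{(n)}$ lies in a coset of some proper compact open subgroup $K'$ of $K_0$. Applying Corollary \ref{QP1cor} (with the roles of $A$ and $B$ reversed) to the pair $(A^{(n)},B^{(n)})$, we find that $A^{(n)}$ is quasi-periodic with respect to some compact open $K''\leqslant K'$, or $A^{(n)}$ lies in a coset of $K'$; the latter would put $A+B\sim A^{(n)}+B^{(n)}$ in a coset of $K'$, contradicting the minimality of $K_0$. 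So Lemma \ref{QP3} applied to $(A^{(n)},B^{(n)})$ gives that $(A^{(n)},B^{(n)})$ satisfies (QP), whence $A+B$ is quasi-periodic and Lemma \ref{QPsumtosummand} yields (QP) for $(A,B)$. We may therefore assume $K_0$ is not totally disconnected: its nontrivial connected component admits a continuous surjection onto $\mathbb T$, which extends to a continuous surjective character $\chi:K_0\to\mathbb T$. Applying Kneser's rearrangement argument from the proof of Proposition \ref{Kconnected}, adapted via the disintegration of $m$ over $\ker\chi$ afforded by Proposition \ref{Weil}, the shrinking of $B^{(n)}$ in the $\chi$-direction combined with $m(A^{(n)})\to m(A+B)$ forces $A$, $B$, $A+B$ (mod null) to be preimages under $\chi$ of intervals $I,J\subseteq\mathbb T$ satisfying $m(A)=m(\chi^{-1}(I))$ and $m(B)=m(\chi^{-1}(J))$, which is conclusion (K) with $K=K_0$.

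The main obstacle is the rearrangement step in the last paragraph: adapting the Kneser argument from Proposition \ref{Kconnected}, where the whole (connected) group maps onto $\mathbb T$ after the $e$-transforms, to the present setting where only the compact open subgroup $K_0$ carries the character $\chi$. The plan is to disintegrate $m$ over $\ker\chi$ using Proposition \ref{Weil}, push $A$, $B$, and $A+B$ forward along $\chi$ to obtain a sur-critical pair on the connected group $\mathbb T$, apply the connected Kneser theorem there to identify the pushforwards as intervals, and then lift this back via the disintegration to the assertion $A\subseteq a+\chi^{-1}(I)$, $B\subseteq b+\chi^{-1}(J)$ with the required measure identities. The bookkeeping required to match measures through the disintegration, and to verify that the one-dimensional interval structure on $\mathbb T$ lifts to the asserted containment in a single fibre, is the delicate point.
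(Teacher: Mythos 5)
Your quantitative shrinking argument is a legitimate alternative to the paper's compactness/limit-point proof of the corresponding claim: the bound $f(t)\leq 4m(B^{(n)})$ for $t\in B^{(n)}-B^{(n)}$, with $f(t)=m((A+B)\triangle((A+B)+t))$ continuous and $f^{-1}(0)=H(A+B)=\{0\}$, does force $B^{(n)}-B^{(n)}$ into any prescribed neighborhood of $0$. The gap comes afterwards, in your case division. The paper splits according to whether the identity component of $G$ is open: either $G$ has open subgroups of arbitrarily small measure, or $G$ has an open \emph{connected} subgroup $G_0$ (Hewitt--Ross, Corollary 7.9). You split instead on whether $K_0$ is totally disconnected, and these dichotomies do not match: a compact abelian group can have a nontrivial identity component of Haar measure zero (e.g.\ $\mathbb T\times\prod_{n\geq1}\mathbb Z/2\mathbb Z$), in which case it is not totally disconnected and yet still has open subgroups of arbitrarily small measure. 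Your Case B sends every such group to the ``extract $\chi:K_0\to\mathbb T$ and adapt Kneser's rearrangement'' branch and claims conclusion (K) with $K=K_0$; but that branch is exactly the step you concede is not carried out, and the proposed mechanism (push $A$, $B$, $A+B$ forward along $\chi$, apply Proposition \ref{Kconnected} on $\mathbb T$, lift back) is not justified: $m_{\mathbb T}(\chi(A))$ bears no a priori relation to $m(A)$, so there is no sur-critical pair on $\mathbb T$ to which Proposition \ref{Kconnected} applies unless the fibre structure has already been established -- which is the whole content of what needs proving. The paper never needs such an argument: when the component is nontrivial but null it runs the small-open-subgroup argument (your Case A) and concludes (QP); and when the component $G_0$ is open, the reduction to Proposition \ref{Kconnected} is immediate, because once $B^{(n)}$ (and, in the relevant subcase, $A^{(n)}$) lies in a coset of $G_0$, the literal inclusions $A\subseteq A^{(n)}$ and $B+e\subseteq A^{(1)}\subseteq A^{(n)}$ place $A$ and $B$ themselves in cosets of the compact \emph{connected} group $G_0$, and Kneser's connected theorem is applied inside $G_0$ directly -- no disintegration or pushforward bookkeeping at all. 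So the delicate step you flag is unnecessary where the rearrangement is legitimate, and unproved (and, as the paper's Case 1 indicates, not the expected conclusion) where the component is not open.

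Two further points. First, the finite case: your opening assertion that (P) is incompatible with $H(A+B)=\{0\}$ fails when $G$ is finite, since $\{0\}$ is then compact and open; moreover Lemma \ref{esequence} is unavailable there because its hypothesis $m(H(A+B))=0$ fails. The paper disposes of finite $G$ at the outset by noting (P) holds trivially; your proposal leaves this case uncovered. Second, in your Case A the exclusion of ``$A^{(n)}$ lies in a coset of $K'$'' by minimality of $K_0$ does not work as written: $A+B\sim A^{(n)}+B^{(n)}$ only places $A+B$ in a coset of $K'$ up to a null set, while $K_0$ is generated by literal differences of $A+B$. This is repairable -- either use the literal containments $A\subseteq A^{(n)}$ and $B+e\subseteq A^{(n)}$ to get genuine containment of $A+B$ in a coset of $K'$, or simply choose $K'$ with $m(K')<m(A)+m(B)\leq m(A^{(n)})$ as the paper does -- but as stated it is a gap; note also that Lemma \ref{esequence} does not assert that the pairs $(A^{(n)},B^{(n)})$ are irreducible, only that the alternative (ii) is triggered when one of them is reducible.
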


\begin{proof}  If $G$ is finite then conclusion (P) of Theorem \ref{main} holds.  If $G$ is infinite, the hypotheses of Lemma \ref{esequence} are satisfied. If (ii) holds in the conclusion of Lemma \ref{esequence} then we are done.  Now we must analyze the case where the conclusion (i) holds in Lemma \ref{esequence}.  Let $(A^{(n)},B^{(n)})$ be a sequence of pairs satisfying (i) of Lemma \ref{esequence}.  We argue as in the proof of Proposition \ref{Kconnected} in \cite{Kneser56}.

\begin{claim} \label{nhoods} For every neighborhood $U$ of $0\in G$, there exists $n$ with $B^{(n)}-B^{(n)}\subseteq U$.
\end{claim}

\noindent\textit{Proof of Claim.}  Suppose not.  Then, since $B^{(n+1)}\subseteq B^{(n)}$ for all $n$, there exists a neighborhood $U$ of $0$ and elements $x_n$, $y_n\in B^{(n)}$ such that $x_n-y_n\notin U$ for all $n\in \mathbb N$. Since $G$ is compact, the sequences $x_n$, $y_n$ have limit points $x$, $y$, with $x-y\neq 0$.   Our choice of $x_n$ and $y_n$ implies
\begin{align}\label{xnyn}
A^{(n)}\subseteq (A^{(n)}+B^{(n)}-y_n)\cap (A^{(n)}+B^{(n)}-x_n).
\end{align}  Condition (i.3) in Lemma \ref{esequence} implies $\lim_{n\to \infty} m(B^{(n)})=0$, so (\ref{et2}) implies $\lim_{n\to \infty} m(A^{(n)})=m(A)+m(B)$.  Then
\begin{align*}
m((A+B)\cap (A+B+y&-x))\geq \liminf_{n\to \infty} m((A+B)\cap (A+B+y_n-x_n))\\
&= \liminf_{n\to \infty} m((A^{(n)}+B^{(n)}-y_n)\cap (A^{(n)}+B^{(n)}-x_n))\\
&\geq \lim_{n\to \infty} m(A^{(n)})\\
&= m(A)+m(B)\\
&= m(A+B),
\end{align*}
where the second line uses translation-invariance of $m$ and the third uses (\ref{xnyn}). Now $A+B+(y-x)\sim A+B$, contradicting the assumption $H(A+B)=\{0\}$.  \hfill $\square$

\smallskip

We now consider two separate cases.    Corollary 7.9 of \cite{HewittandRoss1} implies these two cases cover all possible compact abelian groups $G$.

\subsubsection*{Case \textup{1}. $G$ has open subgroups of arbitrarily small measure}

By the Claim, there is a compact open subgroup $K$ and $n\in \mathbb N$ such that $B^{(n)}-B^{(n)}\subseteq K$, while $m(A^{(n)})>m(K)$.   Then $B^{(n)}$ is contained in a coset of $K$, while $A^{(n)}$ is not. Then Corollary \ref{QP1cor} implies $A^{(n)}$ is quasi-periodic, and Lemma \ref{QP3} implies $A^{(n)}+B^{(n)}$ is quasi-periodic.  Since $A+B\sim A^{(n)}+B^{(n)}$,  Lemma \ref{QPsumtosummand} implies $(A,B)$ satisfies (QP) of Theorem \ref{main}.

\subsubsection*{Case \textup{2}. $G$ has an open connected subgroup.}

Let $G_0$ be an open connected subgroup of $G$.  By the Claim, we can choose $n$ sufficiently large that $B^{(n)}$ is contained in a  coset of $G_0$.  If $A^{(n)}$ is contained in a coset of $G_0$, then by the construction of $A^{(n)}$, $A$ and $B$ are each contained in a coset of $G_0$.  Then Proposition \ref{Kconnected} implies the existence of a continuous surjective homomorphism $\chi:G_0\to \mathbb T$ and intervals $I,J\subseteq \mathbb T$ such that $A\subseteq a+\chi^{-1}(I)$, $B\subseteq b+\chi^{-1}(J)$, for some $a$, $b\in G$, such that $m(A)=m(\chi^{-1}(I))$ and $m(B)=m(\chi^{-1}(J))$.  From this we conclude (K) in Theorem \ref{main}.
If $A^{(n)}$ is not contained in a coset of $G_0$, we argue as in Case 1 and find that $(A,B)$ satisfies (QP).  \end{proof}

\subsection{Proof of Theorem \textup{\ref{main}}.}\label{proof}  If $(A,B)$ is extendible, we have conclusion (E).  If $(A,B)$ is nonextendible and reducible, Lemma \ref{reduciblelemma2} implies $(A,B)$ satisfies conclusion (QP).  If $(A,B)$ is irreducible and nonextendible,  Lemma \ref{specialtogeneral} allows us to assume that $A+B$ is measurable and $H(A+B)=\{0\}$.  In that case, Proposition \ref{specialprop} implies the conclusion of Theorem \ref{main}.  \hfill $\square$

\section{When $G$ is not compact}\label{lc}  A structure theorem for locally compact abelian groups (\cite{HewittandRoss1}, Theorem 24.30) says that such a group $G$ is isomorphic (as a topological group) to $\mathbb R^n\times L$, where $L$ is a locally compact group containing a compact open subgroup, and $n$ is a nonnegative integer.  The study of the equation \begin{align}\label{equation}
m_*(A+B)=m(A)+m(B)
\end{align}
depends heavily on $n$.  Theorem 5 of \cite{Kneser56} says that when $n\geq 1$,
\begin{align}\label{BM}
m_*(A+B)^{1/n} \geq m(A)^{1/n}+m(B)^{1/n}
 \end{align}
 for measurable subsets $A$ and $B$ of $G$, and Theorem 6 of \cite{Kneser56} classifies the pairs for which equality holds in (\ref{BM}), generalizing Theorem 2 of \cite{HenstockMacbeath} to the case where $L$ is nontrivial.  When $n>1$ and $A$ and $B$ both have positive measure, (\ref{BM}) implies $m_*(A+B)>m(A)+m(B)$, so there are no such pairs satisfying (\ref{equation}) when $n>1$.  When $n=1$, Theorem 6 of \cite{Kneser56} says that if $A$ and $B$ have positive measure then (\ref{equation}) holds if and only if there are closed intervals $I$, $J\subseteq \mathbb R$, a compact open subgroup $K\leqslant L$, and $a$, $b\in G$ such that $A\subseteq a+(I\times K)$, $B\subseteq b+(J\times K)$, and $m(A)=m(I\times K)$, $m(B)=m(J\times K)$.

\smallskip

Our study of equation (\ref{equation}) is thus reduced to groups $G$ having a compact open subgroup.  We assume $A$ and $B$ have finite measure, since otherwise (\ref{equation}) is satisfied.  One can easily check that $A$ and $B$ have compact closures under these assumptions.  Replacing $G$ by the group generated by $A\cup B$, we can assume that $G$ is compactly generated.  A compactly generated group with a compact open subgroup is isomorphic to $\mathbb Z^d\times K$ for some nonnegative integer $d$ and some compact group $K$ (\cite{HewittandRoss1}, Theorem 9.8). For such groups $G$, it is then routine to verify that Theorem \ref{main} holds with no modification of the conclusion.  This is the content of the following corollary.

\begin{corollary}\label{generalization}  The conclusion of Theorem \textup{\ref{main}} holds under the weaker assumption that $G$ has a compact open subgroup and $m_*(A+B)=m(A)+m(B)<\infty$. \hfill $\square$
\end{corollary}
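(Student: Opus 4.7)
The discussion preceding the corollary reduces the problem to the case where $G$ is compactly generated with a compact open subgroup and $m(A), m(B) < \infty$. By the structure theorem (\cite{HewittandRoss1}, Theorem 9.8), $G \cong \mathbb{Z}^d \times K$ for some compact abelian $K$ and $d \geq 0$. When $d = 0$, Theorem \ref{main} applies directly; assume $d \geq 1$ below.

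The plan is to quotient by a large subgroup of $\mathbb{Z}^d$ to pass to a compact group, apply Theorem \ref{main} there, and transfer the conclusion back to $G$. Since $m(A), m(B) < \infty$, the projections of $A$ and $B$ to $\mathbb{Z}^d$ are finite sets. Choose a prime $M$ exceeding both twice the maximum absolute value of any coordinate appearing in the $\mathbb{Z}^d$-projection of $A \cup B$ and the cardinality of that projection; then the quotient map $\psi : G \to G' := (\mathbb{Z}/M\mathbb{Z})^d \times K$ is injective on each of $A$, $B$, and $A + B$, and $G'$ is compact. Using a compact fundamental domain $D \subseteq G$ for $\ker \psi$ containing $A + B$, one verifies that $m_{G'}(\psi(S)) = M^{-d} m_G(S)$ whenever $\psi|_S$ is injective, and similarly $m^*_{G'}(\psi(A + B)) = M^{-d} m_*(A + B)$. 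Thus $(\psi(A), \psi(B))$ is a sur-critical pair in $G'$, and Theorem \ref{main} yields one of (P), (E), (K), (QP) for it.

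The transfer to $G$ relies on the fact that compact open subgroups of $G = \mathbb{Z}^d \times K$ are precisely the subgroups $\{0\} \times K'$ with $K' \leqslant K$ compact open. The main technical point is to show that the compact open subgroup $\tilde K \leqslant G'$ arising in Theorem \ref{main} satisfies $\tilde K \subseteq \{0\} \times K$: for (P) and (QP), $\tilde K$ lies in the essential stabilizer of $\psi(A) + \psi(B)$ or $\psi(A)_1$ respectively, and any non-trivial translation in the $(\mathbb{Z}/M\mathbb{Z})^d$-direction would shift the $(\mathbb{Z}/M\mathbb{Z})^d$-projection of that set, which is impossible since for $M$ prime and larger than the cardinality of the projection, the only stabilizer is trivial (any stabilizing subgroup is an $\mathbb{F}_M$-subspace, forcing the set size to be divisible by $M$). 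For (K), one further observes that $\ker(\chi)$ lies in the essential stabilizer of $\psi(A) + \psi(B)$, hence in $\{0\} \times K$, so the projection $\tilde K \to (\mathbb{Z}/M\mathbb{Z})^d$ factors through the connected quotient $\tilde K / \ker(\chi) \cong \mathbb{T}$ and must therefore be trivial. Once $\tilde K \subseteq \{0\} \times K$ is established, each conclusion lifts through $\psi|_D^{-1} : G' \to D$: the partitions, the homomorphism $\chi$, and the intervals in (P), (K), (QP) carry over unchanged, and for (E) the extended sets $\tilde A' \supseteq \psi(A)$, $\tilde B' \supseteq \psi(B)$ lift to $A' := \psi^{-1}(\tilde A') \cap D \supseteq A$ and $B' := \psi^{-1}(\tilde B') \cap D \supseteq B$, preserving the required measure relations.
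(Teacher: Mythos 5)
Your overall strategy---reduce to $G\cong\mathbb Z^d\times K$, pass to the compact quotient $G'=(\mathbb Z/M\mathbb Z)^d\times K$, apply Theorem \ref{main} there, and pull the conclusion back---is a reasonable way to make the paper's ``routine to verify'' claim precise, and your treatment of (P), (K) and (QP) is essentially sound, modulo two repairable slips: $M$ greater than twice the largest coordinate does not make $\psi$ injective on $A+B$ (elements of the projection of $A+B$ can differ by up to four times that bound, so you need $M$ on the order of $4N$), and the stabilizer--cardinality argument should be run on $\psi(A)$ (or on whichever of $A_1,B_1$ is nonempty, or on $\ker\chi$ acting on $\psi(A)$), not on $\psi(A)+\psi(B)$, since your choice of $M$ does not control the cardinality of the sumset's projection.

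The genuine gap is in case (E). The lifted sets $A'=\psi^{-1}(\tilde A')\cap D$, $B'=\psi^{-1}(\tilde B')\cap D$ need not satisfy $m_*(A'+B')=m_*(A+B)$, because sums that wrap around modulo $M$ in $G'$ do not wrap in $G$. Concretely, take $G=\mathbb Z\times\mathbb T$, $A=\{0,1\}\times[0,0.8]$, $B=\{0,1\}\times[0,0.7]$, so $A+B=\{0,1,2\}\times\mathbb T$ and $m_*(A+B)=3=m(A)+m(B)$. Your prescription permits $M=3$, and then $\psi(A)+\psi(B)=G'$, so $\tilde A'=G'$, $\tilde B'=\psi(B)$ is a legitimate witness of (E) in $G'$; its lift is $A'=\{0,1,2\}\times\mathbb T$, $B'=B$, whose sumset $\{0,1,2,3\}\times\mathbb T$ has measure $4>3$, so the ``required measure relations'' are not preserved. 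The phenomenon is not cured just by enlarging $M$: whenever $(A,B)$ admits extension points whose $\mathbb Z^d$-coordinates fall outside the fixed fundamental domain (e.g.\ on the negative side of the projection of $A$), Theorem \ref{main} in $G'$ may hand you an extension whose representatives in $D$ sit at wrapped positions, and then $A'+B'$ escapes the measurable hull of $A+B$. So in case (E) you need an actual argument that one of the four conclusions descends to $(A,B)$---for instance by choosing coset representatives adapted to the extension rather than a fixed domain $D$, or by transferring the structural data of \S\ref{Eremark} (the subgroup $H(\tilde A'+\tilde B')$, which your stabilizer argument does place inside $\{0\}\times K$) instead of the extended sets themselves---rather than the unproved, and in general false, assertion that the fixed-domain lift preserves $m_*(A'+B')=m_*(A+B)$.
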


\section{Acknowledgements}  This work was conducted mostly while the author was a postdoctoral fellow in the Department of Mathematics at the University of British Columbia.  The author thanks Izabella {\L}aba and Malabika Pramanik for financial support and helpful discussions.  Discussions with Michael Bj\"orklund and Alexander Fish provided motivation for this work.

\bibliographystyle{amsplain}
\frenchspacing
\bibliography{inversetheorem}

\end{document}